\theoremstyle{plain}
\newtheorem{theorem}{Theorem}
\newtheorem{lemma}[theorem]{Lemma}
\newtheorem{proposition}[theorem]{Proposition}
\newtheorem{corollary}[theorem]{Corollary}
\newtheorem{definition}[theorem]{Definition}
\numberwithin{theorem}{section}
\numberwithin{equation}{section}
\newcommand{\C}{\mathbb{C}}
\newcommand{\N}{\mathbb{N}}
\newcommand{\R}{\mathbb{R}}
\newcommand{\T}{\mathbb{T}}
\newcommand{\fG}{\mathfrak{G}}
\newcommand{\cH}{\mathcal{H}}
\newcommand{\cT}{\mathcal{T}}
\newcommand{\cO}{\mathcal{O}}
\newcommand{\cF}{\mathcal{F}}
\newcommand{\rS}{\mathrm{S}}
\newcommand{\rU}{\mathrm{U}}
\newcommand{\ip}[2]{\langle #1,#2 \rangle}
\newcommand{\intL}[3]{\int_{\C^{#1}} #2 d\lambda_{#1}(#3)}
\newcommand{\F}{\mathcal{F}(\mathbb{C}^n)}
\newcommand{\Fp}[1]{\mathcal{F}(\mathbb{C}^{#1})}
\newcommand{\Linfnn}{L^{\infty}(\mathbb{C}^{n})^G}
\newcommand{\UU}{\rU_{n_1}\times \cdots \times \rU_{n_k}}
\newcommand{\pin}{\pi^{n_1}\otimes \cdots \otimes \pi^{n_k}}
\newcommand{\polynn}{\Poly{m_1}{n_1}\otimes\cdots\otimes \Poly{m_k}{n_k}}
\newcommand{\ro}{C_{b,u}(\mathbb{N}_0,\rho_1)}
\newcommand{\RO}{C_{b,u}(\mathbb{N}_0^k, \rho_k)}
\newcommand{\tensorRO}{C_{b,u}(\mathbb{N}_0, \rho_1)\otimes \cdots\otimes C_{b,u}(\mathbb{N}_0, \rho_1)}
\newcommand{\eigenfnn}{\gamma_{\vn,a}}
\newcommand{\eigenfnnvarphi}{\gamma_{\vn,\varphi}}
\newcommand{\specnn}{\mathfrak{G}_{\vn}}
\newcommand{\eigenfn}{\gamma_{\textbf{1},a}}
\newcommand{\eigenfb}[1]{\gamma_{\textbf{1},#1}}
\newcommand{\Poly}[2]{\mathrm{P}^{#1}[\C^{#2}]}
\newcommand{\id}{\mathrm{id}}
\newcommand{\rP}{\mathrm{P}}
\newcommand{\vx}{\textbf{\textit{x}}}
\newcommand{\vm}{\textbf{\textit{m}}}
\newcommand{\vn}{\textbf{\textit{n}}}
\newcommand{\vr}{\textbf{\textit{r}}}
\newcommand{\vp}{\textbf{\textit{p}}}
\begin{document}

\keywords{Toeplitz Operators, Fock Space, Reproducing Kernel Hilbert spaces, Commutative $C^*$-algebras}
\title[Toeplitz operators with quasi-radial symbols]{Toeplitz operators on the Fock space  with quasi-radial symbols}

\author{Vishwa Dewage \& Gestur \'Olafsson}
\address{Department of Mathematics, Louisiana State University\\
Baton Rouge, LA 70803, USA\\
E-Mail Dewage:vdewag1@lsu.edu
E-Mail \'Olafsson: olafsson@math.lsu.edu}
\subjclass[2000]{22D25, 30H20, 41A35, 47B35}
 
\thanks{The research of G. \'Olafsson was partially supported by Simons grant 586106.}

\maketitle
%===================================================================
\begin{abstract}
The Fock space $\mathcal{F}(\C^n)$ is the space of holomorphic functions on $\C^n$ that are square-integrable with respect to the Gaussian measure on $\C^n$. This space plays an
important role in several subfields of analysis and representation theory. In particular, it has for a long time been a model to study Toeplitz operators.
Esmeral and Maximenko showed in 2016 that   radial Toeplitz operators on $\mathcal{F}(\C)$ generate a 
commutative $C^*$-algebra %\cite{GV02} 
which is isometrically isomorphic to  the $C^*$-algebra $\ro$. In this article, we extend the result to  
$k$-quasi-radial symbols acting on the Fock space $\F$. We calculate the spectra of 
the said Toeplitz operators and show that the set of all eigenvalue functions is dense in the 
$C^*$-algebra $\RO$ of bounded functions on $\N_0^k$ which are uniformly continuous 
with respect to the square-root metric. In fact, the $C^*$-algebra generated by 
Toeplitz operators with quasi-radial symbols is $\RO$. 
\end{abstract}

\tableofcontents

\section{Introduction}
Reproducing kernel Hilbert spaces $\cH$ of square integrable holomorphic functions on complex domains plays an
important role in several subfields of analysis and representation theory \cite{B61}. Those spaces lead to a very simple
form of quantization, \cite{B75a, B75b, EU10, EU11} turning functions into bounded operators on $\cH$, by using Toeplitz operators
$T_\varphi (f) =P(\varphi f)$, where $P$ is the orthogonal projection onto $\cH$. These operators have been studied for a long time, \cite{BC86,BI12,BL11,BS06,C94}.

Since the fundamental work \cite{GV02,GQV06}, and then later several authors have studied $C^*$-algebras generated by
subclasses of bounded symbols asking the question which of them lead to \textit{commutative} $C^*$-algebras as in
\cite{DOQ15,DOQ18,DOQ21,DQ18,EM16,GMV13,GKV03,QS11,QV07a,QV07b,QV08,V08}. In
the case of commutative $C^*$-algebras, the next natural question is to determine the
spectrum and, if possible, determine the Gelfand transform. This is often achieved by constructing a transform that maps Toeplitz operators to multiplier operators on an $L^2$ space.

In the special case of the unit ball in $\C^n$, it
was shown that 
Toeplitz operators with radial symbols acting on the Bergman space generate a commutative $C^*$-algebra as in
 \cite{GKV03} and the corresponding $C^*$-algebra  is isometrically isomorphic to the space of bounded sequences that are uniformly continuous with respect to the logarithmic 
 metric \cite{GMV13}. Toeplitz operators with radial
  symbols on the Fock space $\mathcal{F}(\C)$ generate a 
  commutative $C^*$-algebra \cite{GV02} which is isometrically isomorphic to the $C^*$-algebra $\ro$ of bounded sequences that are uniformly continuous with respect to the square-root metric \cite{EM16}. 
  Furthermore, a more general theory for weighted Bergman spaces on any bounded symmetric domain was formulated in \cite{DOQ15}, using representation theory.

In this paper, we consider the $C^*$-algebra generated by Toeplitz operators with $k$-quasi-radial symbols, discussed in \cite{BV12,MSR16,V10a,V10b,QS14,QS15}, acting on the Fock space $\F$. As an instance of the general theory discussed in \cite{DOQ15}, the generated $C^*$-algebra is commutative. 
We compute the eigenvalue functions of Toeplitz operators with quasi-radial symbols and then show that they are dense in the space $\RO$ of bounded functions on $\N_0^k$ that are uniformly continuous with respect to the square-root metric. As a consequence, the $C^*$-algebra generated by Toeplitz operators with $k$-quasi-radial symbols is isometrically isomorphic to the $C^*$-algebra $\RO$.

We now introduce some notations and recall the following well known facts about the Fock space.
Denote by $d\lambda_n$ the Gaussian measure  $d\lambda_n(z)=\frac{1}{\pi^n}e^{-|z|^2}dz$ for $z \in\C^n$,
where $dz$ is the Lebesgue measure on $\C^n\simeq \R^{2n}$. The {\it Fock space} $\F$ is
the Hilbert space of all holomorphic functions on $\C^n$ that
are square integrable with respect to $d\lambda_n$,
$$\Fp{n}=L^2(\C^n,\lambda_n)\cap \cO (\C^n).$$ The point-evalutation maps $f\mapsto f(z)$
are continuous and hence $\F$ is a reproducing kernel Hilbert space with the reproducing kernel $K$ given by
$K(w,z)=K_z(w)=e^{w\Bar{z}}$, for $(z,w)\in\C^{2n}$ (here $\Bar{z}w$ denotes the scalar product $w_1\Bar{z}_1+\cdots +w_k\Bar{z}_k$). Thus for $z\in \C^n$ and $f\in \F$ we have 
$$f(z)=\langle f,K_z \rangle$$
with the inner product of $L^2(\C^n,\lambda_n)$.

The Bergman projection $P: L^2(\C^n)\rightarrow \F$ is given by 
$$Pf(z)=\ip{f}{K_z}.$$ $P$ is a bounded linear operator with $\|P\|=1$.

Given $\varphi\in L^\infty(\C^n)$, we define the Toeplitz operator $T_\varphi:\F \rightarrow \F$ by
    $$T_\varphi f(z)= P(\varphi f)(z)=  \int_{\C^n} \varphi(w)f(w)\overline{K_z(w)}d\lambda_n(w).$$
As the multiplier operator $M_\varphi : L^2(\C^n,\lambda )\to L^2(\C^n,\lambda )$ is bounded
of norm $\|\varphi \|_\infty$ and $P$ is bounded, it follows that $T_\varphi$ is bounded and
$\|T_\varphi \|\le \|\varphi \|_\infty$. The function $\varphi$ is called the symbol of the Toeplitz operator $T_\varphi$.

 Grudsky and Vasilevski \cite{GV02} showed that the eigenvalue sequences of radial Toeplitz operators on $\mathcal{F}(\C)$ are of the form
 $$\gamma_{1,a}(m)=\frac{1}{m!}\int_0^{\infty} a(\sqrt{r})r^{m}e^{-r}dr.$$

  We prove that the Toeplitz operators with $k$-quasi-radial symbols diagonalize with eigenvalue functions given by 
  $$\eigenfnn(\vm)=\frac{1}{(\vm+\vn-\textbf{1})!}\int_{\R_+^k} a(\sqrt{r_1},\dots,\sqrt{r_k})r^{\vm+\vn-\textbf{1}}e^{-(r_1+\cdots+r_k)}dr$$
  for any $\vm\in\N_0^k$, where $\vn=(n_1,\dots,n_k)$.
  
  As a corollary the eigenvalue sequences of radial Toeplitz operators on $\F$ are of the form
 $$\gamma_{n,a}(m)=\frac{1}{(m+n-1)!}\int_0^{\infty} a(\sqrt{r})r^{m+n-1}e^{-r}dr$$
  which are the $(n-1)^\text{th}$ left shift of $\gamma_{1,a}$.

To prove eigenvalue function of Toeplitz operators with $k$-quasi-radial symbols is dense in the $C^*$-algebra $\RO$, we follow some of the ideas in the work by Esmeral and Maximenko \cite{EM16}.\\

\noindent \textbf{Acknowledgements:} We sincerely thank the anonymous reviewers for the constructive and positive feedback.
 
 %=================================================================

\section{Toeplitz operators with bounded  \texorpdfstring{$k$}{k}-quasi-radial symbols}\label{sec:diagnn}

In this section, we introduce quasi-radial symbols and diagonalize the Toeplitz operators with quasi-radial symbols. 

We begin with some notations. Let $k\in \N$ and let $\vn=\vn(k)=(n_1,\dots,n_k)\in \N^k$ and let $n=n_1+\cdots +n_k$. Let $\C^{\vn}=\C^{n_1}\times\cdots\times\C^{n_k}$.
We identify $z\in \C^{n}$ by $z=(z_{(1)},\cdots,z_{(k)})\in \C^{\vn}$ where $z_{(j)}\in\C^{n_j}$. Consider the group $G=\UU$ where $\rU_{n_i}$ are $n_i\times n_i$ unitary matrices. They act on  $f$ defined on $\C^n$ by
$(A_1,\dots A_k)f(z)=(A_1,\dots, A_k)f(z_{(1)},\cdots,z_{(k)}):=f(A_1^{-1}z_{(1)},\dots A_{k}^{-1}z_{(k)})$ which is the usual action of matrices we get by considering $G$ as a subgroup of $\rU_n$. The action leaves the Gaussian measure $d\lambda_n(z)=d\lambda_{n_1}(z_{(1)})\dots d\lambda_{n_k}(z_{(k)})$ invariant.

\begin{definition} Let $\varphi :\C^n\to \C$ be measurable. The function
$\varphi$ is said to be
$G$-invariant if for all $(A_1,\dots A_k)\in G$ and $z\in\C^n$ we have
$(A_1,\dots A_k)\varphi(z)=\varphi (A_1^{-1}z_{(1)},\dots A_{k}^{-1}z_{(k)}) =\varphi(z)$.
\end{definition}

 Define the set of all $k$-quasi-radial symbols, denoted by $\Linfnn$, to be the set of all essentially bounded functions on $\C^n$ that are $G$-invariant. The case $k=1$ corresponds to radial symbols and the case $n=k$ corresponds to separately radial symbols. Now we take into consideration the $C^*$-algebra $\cT_G$ generated by Toeplitz operators with symbols in $\Linfnn$. 
The main objective of this section is to diagonalize the Toeplitz operators with symbols in $\Linfnn$.

\subsection{A classification of the class of symbols \texorpdfstring{$\Linfnn$}{}} 

\begin{definition} Let $\varphi :\C^q\to \C$ be measurable. The function
$\varphi$ is said to be,
\begin{enumerate}
\item radial if there exists a measurable function $a_\varphi:\R_+ 
\to\C$ such that $\varphi (z)=a_\varphi(|z|)$ for $z\not= 0$.
\item $\rU_q$-invariant if for all $A\in\rU_q$ and $z\in\C^q$ we have
$\varphi (Az) =\varphi(z)$.
\end{enumerate}
\end{definition}
Following lemma is a well-known fact.

\begin{lemma}  \label{lem:radialf}
The measurable function $\varphi :\C^{q}\to \C$ is $\rU_q$-invariant if and only if there exists $a:\R_+\rightarrow \C$ such that
$\varphi(z)=a(|z|)$.
\end{lemma}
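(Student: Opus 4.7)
The plan is to prove both directions of the biconditional, with the forward ($\Leftarrow$) direction being immediate and the reverse ($\Rightarrow$) direction resting on the transitivity of the $\rU_q$-action on spheres.

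For the easy direction, assume $\varphi(z)=a(|z|)$ for some measurable $a:\R_+\to\C$. If $A\in\rU_q$, then $A$ preserves the Hermitian inner product on $\C^q$, so $|Az|=|z|$ for every $z\in\C^q$. Hence $\varphi(Az)=a(|Az|)=a(|z|)=\varphi(z)$, and $\varphi$ is $\rU_q$-invariant.

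For the nontrivial direction, the key geometric fact is that $\rU_q$ acts transitively on each sphere $\{z\in\C^q:|z|=r\}$: given any unit vector $u\in\C^q$ one can extend $u$ to an orthonormal basis $(u,v_2,\dots,v_q)$ and form the unitary matrix $A$ whose columns are those basis vectors; then $Ae_1=u$, where $e_1=(1,0,\dots,0)$. I would then define
\[
a:\R_+\to\C,\qquad a(r):=\varphi(re_1),
\]
which is measurable since $r\mapsto re_1$ is continuous and $\varphi$ is measurable. For $z\in\C^q$ with $z\ne 0$, set $u=z/|z|$ and pick $A\in\rU_q$ with $Ae_1=u$; then $A(|z|e_1)=|z|u=z$, and $\rU_q$-invariance gives
\[
\varphi(z)=\varphi\bigl(A(|z|e_1)\bigr)=\varphi(|z|e_1)=a(|z|).
\]
The case $z=0$ is handled separately by $a(0)=\varphi(0)$.

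I do not foresee a significant obstacle: the argument is elementary, relying only on the transitivity of $\rU_q$ on spheres and the pointwise definition of invariance stated in the paper. The only minor subtlety is ensuring that the candidate function $a$ is measurable, which is immediate from the formula $a(r)=\varphi(re_1)$. No deeper measure-theoretic gymnastics are needed because the invariance hypothesis is pointwise (for every $A\in\rU_q$ and every $z\in\C^q$), rather than merely almost-everywhere.
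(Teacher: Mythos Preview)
Your proof is correct and follows essentially the same route as the paper: both directions are handled identically, defining $a(r)=\varphi(re_1)$ and invoking the transitivity of the $\rU_q$-action on spheres to conclude $\varphi(z)=a(|z|)$. The only cosmetic difference is that the paper chooses $A$ with $Az=|z|e_1$ while you choose $A$ with $A(|z|e_1)=z$, which amounts to replacing $A$ by $A^{-1}$.
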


\begin{proof} As $|Az|=|z|$ for $A\in\rU_q$, it follows that any function of the form
$\varphi(z)=a(|z|)$ is $\rU_q$-invariant. As $z\mapsto |z|$
is continuous it follows that $\varphi$ 
is measurable if and only if $a$ is measurable.

Assume that $\varphi$ is $\rU_q$-invariant. Define $a_\varphi(r)=\varphi (re_1)$. 
Then $a_\varphi$ is measurable. Let $z\in\C^q$. Then, as $\rU_q$ acts
transitively on the sphere $\rS^{2q-1}$,
there exist $A\in\rU_q$ such that
$Az=|z|e_1$. Thus
\[\varphi(z)=\varphi (Az) =\varphi (|z|e_1) =a_\varphi (|z|).\]
Thus $\varphi (z)=a_\varphi (|z|)$.
\end{proof}

The following lemma is a consequence of Lemma \ref{lem:radialf}.

\begin{lemma} \label{lem:radialfn}
The measurable function $\varphi :\C^{\vn}\to \C$ is $G$-invariant if and only if there exists $a:\R_+^k\rightarrow \C$ such that
$\varphi(z)=a(|z_{(1)}|,\dots ,|z_{(k)}|)$.
\end{lemma}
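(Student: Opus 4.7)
The plan is to reduce Lemma \ref{lem:radialfn} to the single-block case Lemma \ref{lem:radialf} by using transitivity of the $G$-action on products of spheres. The ``if'' direction is immediate: for any $(A_1,\dots,A_k)\in G$ we have $|A_j^{-1}z_{(j)}|=|z_{(j)}|$ for each $j$, so a function of the moduli $(|z_{(1)}|,\dots,|z_{(k)}|)$ is automatically $G$-invariant. Measurability of $\varphi$ follows from measurability of $a$ via the continuous map $z\mapsto (|z_{(1)}|,\dots,|z_{(k)}|)$.

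For the nontrivial direction, assume $\varphi$ is $G$-invariant and define
\[
a(r_1,\dots,r_k):=\varphi(r_1 e_1^{(1)},\dots,r_k e_1^{(k)}),
\]
where $e_1^{(j)}$ denotes the first standard basis vector of $\C^{n_j}$. This function is measurable as the composition of $\varphi$ with a continuous embedding $\R_+^k\hookrightarrow \C^{\vn}$, exactly as in the proof of Lemma \ref{lem:radialf}. Given $z\in\C^{\vn}$, use the fact that $\rU_{n_j}$ acts transitively on the sphere $\rS^{2n_j-1}$ to choose $A_j\in\rU_{n_j}$ with $A_j z_{(j)}=|z_{(j)}|e_1^{(j)}$ for each $j$ (if $z_{(j)}=0$ take $A_j$ arbitrary and the identity $0=|0|e_1^{(j)}$ still holds). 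Setting $B_j:=A_j^{-1}$, the tuple $(B_1,\dots,B_k)$ lies in $G$, and $G$-invariance gives
\[
\varphi(z)=\varphi(B_1^{-1}z_{(1)},\dots,B_k^{-1}z_{(k)})=\varphi(|z_{(1)}|e_1^{(1)},\dots,|z_{(k)}|e_1^{(k)})=a(|z_{(1)}|,\dots,|z_{(k)}|),
\]
which is the desired representation.

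There is essentially no obstacle here; the only point requiring a moment of care is the bookkeeping between the $G$-action (which uses inverses) and the choice of $A_j$ mapping $z_{(j)}$ to a standard direction, which is handled by passing to $B_j=A_j^{-1}$. An alternative presentation would iteratively apply Lemma \ref{lem:radialf} one block at a time, freezing the other $k-1$ variables, but the direct transitivity argument above is cleaner and mirrors exactly the structure of the proof of Lemma \ref{lem:radialf}.
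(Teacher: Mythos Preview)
Your proof is correct and is precisely the natural blockwise extension of the argument for Lemma~\ref{lem:radialf}; the paper itself gives no explicit proof, simply stating that the lemma is a consequence of Lemma~\ref{lem:radialf}, which is exactly what your argument carries out in detail.
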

  
\subsection{Diagonalization by Schur's lemma}Let $\pi$ and $\sigma$ be unitary representations of a topological group H. A continuous linear map 
$T:V_\pi\to V_\sigma$ is an \textit{intertwining operator} if for all $A\in H$ commutating
relation $T\pi (A)=\sigma (A)T$ holds. $\pi $ and $\sigma$ are equivalent if there exists
an unitary isomorphism that intertwines $\pi$ and $\sigma$. Finally if $\pi $ is irreducible 
and $T$ is an intertwining operator then Schur's lemma says that $T=\lambda \id$ for
some $\lambda\in \C$.

Since the action of $G$ is defined by the action of $\rU_{n_i}$ on $\cF^2(\C^{n_i}),i=1,\dots,k$, first we consider the action of $U_q$ on $\cF^2(\C^q), q\in\N$.

Denote by $\Poly{m}{q}$ the space of
homogeneous holomorphic polynomials on $\C^q$ of degree $m$. Note that the space of holomorphic polynomials is dense in $\cF (\C^q)$ and any holomorphic polynomial can be written in a unique way as a direct sum of homogeneous polynomials. In fact the following is well known and can be found in \cite{Z12}.

\begin{lemma} For $\alpha \in\N^q$ let $p_\alpha (z)=z_1^{\alpha_1}\cdots z_q^{\alpha_q}$. Then
\[\|p_\alpha\|^2_\cF = \alpha !=\alpha_1!\ldots \alpha_q!\]
and the collection
\[\{q_\alpha =\frac{1}{\sqrt{\alpha !}} p_\alpha\}_{\alpha\in\N^q}\]
forms an orthonormal basis for $\cF(\C^q)$.
\end{lemma}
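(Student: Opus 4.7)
The plan is to verify orthonormality by direct Gaussian integration and then establish completeness by matching the expansion of the reproducing kernel against Parseval's identity.

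For the first step I would use that $d\lambda_q(z)=d\lambda_1(z_1)\cdots d\lambda_1(z_q)$ and that $p_\alpha(z)\overline{p_\beta(z)}=\prod_j z_j^{\alpha_j}\bar z_j^{\beta_j}$, so by Fubini the inner product factors into one-dimensional integrals $\int_\C z^{a}\bar z^{b}\,d\lambda_1(z)$. Passing to polar coordinates, the angular factor $\int_0^{2\pi}e^{i(a-b)\theta}\,d\theta$ vanishes unless $a=b$, giving $\ip{p_\alpha}{p_\beta}=0$ whenever $\alpha\neq\beta$. For $a=b$ the radial integral reduces via $s=r^{2}$ to $a!$, and multiplying the one-variable results over $j=1,\dots,q$ yields $\|p_\alpha\|_\cF^2=\alpha!$. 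Thus $\{q_\alpha\}$ is orthonormal.

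For completeness I would exploit the reproducing kernel, which admits the Taylor expansion
\[
K_z(w)=e^{w\bar z}=\sum_\alpha \frac{\bar z^\alpha}{\alpha!}\,p_\alpha(w)=\sum_\alpha \frac{\bar z^\alpha}{\sqrt{\alpha!}}\,q_\alpha(w).
\]
The sum of squared coefficients equals
\[
\sum_\alpha \frac{|z|^{2\alpha}}{\alpha!}=e^{|z|^2}=K(z,z)=\|K_z\|_\cF^2,
\]
so the partial sums of the expansion converge to $K_z$ in the Fock norm, and every $K_z$ lies in the closed linear span of $\{q_\alpha\}$. Because $f(z)=\ip{f}{K_z}$ for all $f\in\cF(\C^q)$, the set $\{K_z:z\in\C^q\}$ is total in $\cF(\C^q)$, so $\{q_\alpha\}$ is total as well and hence an orthonormal basis.

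The main obstacle is the completeness step, as the norm computation is a routine Gaussian integral. What must be ruled out is a nonzero $f\in\cF(\C^q)$ orthogonal to every monomial $p_\alpha$, and the cleanest way I see is the reproducing kernel argument above, which reduces the question to the Parseval bookkeeping identity for $\|K_z\|_\cF^2$. Alternative routes—approximating a general $f$ in Fock norm by its Taylor polynomials, or decomposing $\cF(\C^q)$ into homogeneous components via the circle action $e^{i\theta}\cdot z$—all ultimately rest on the same norm-convergent series.
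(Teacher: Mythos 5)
Your argument is correct. The paper does not actually prove this lemma; it simply states that it is well known and cites Zhu's book \cite{Z12}, so there is nothing in the paper to compare against. Your two steps are the standard ones: the factorization of the Gaussian measure plus the polar-coordinate computation $\frac{1}{\pi}\int_\C z^a\bar z^b e^{-|z|^2}\,dz = a!\,\delta_{ab}$ gives orthonormality, and the reproducing-kernel expansion of $e^{w\bar z}$ together with the bookkeeping identity $\sum_\alpha |z^\alpha|^2/\alpha! = e^{|z|^2} = \|K_z\|^2$ shows that each $K_z$ lies in the closed span of the $q_\alpha$. For the completeness step it is worth making explicit the small bridge you are using: the partial sums of $\sum_\alpha \frac{\bar z^\alpha}{\sqrt{\alpha!}}q_\alpha$ are Cauchy in $\cF$ because the coefficients are square-summable, so they converge in norm to some $g\in\cF$; since norm convergence implies pointwise convergence (continuity of point evaluations) and the Taylor series converges pointwise to $K_z$, one concludes $g=K_z$. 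With that sentence in place, the reduction from totality of $\{K_z\}$ to totality of $\{q_\alpha\}$ is airtight. One typographical quibble: $\sum_\alpha \frac{|z|^{2\alpha}}{\alpha!}$ should read $\sum_\alpha \frac{|z^\alpha|^2}{\alpha!}$ (equivalently $\prod_j |z_j|^{2\alpha_j}/\alpha_j!$), though the intent is clear.
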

  
$\rU_q$ acts on functions on $\C^q$ by $Af(z)=f(A^{-1}z)$. This action leaves
$\cF(\C^q)$ invariant and, as the measure $\lambda_q$ is $\rU_q$-invariant, it is
a unitary representation denoted by $\pi^q$.  This representation is
reducible. In fact this representation leaves the spaces $\Poly{m}{q}$ invariant. We denote
the corresponding representation by $\pi_m^q$. The representation $\pi_m^q$ is irreducible.
The torus $\T\id $ is the center of $\rU_q$ and acts by the character $z\mapsto z^{-m}\id$. The
representations  
$\pi_m^q$ and $\pi_k^q$ are inequivalent if $m\not= k$ as can easily be seen as the
action of the center is different. The same conclusion follows from
the fact that $d_m=\dim_\C \Poly{m}{q}\not= d_k$ if $m\not= k$ for $q>1$ and $d_m=1$ for $q=1$. The decomposition
of $\Fp{q}$ in irreducible $U_q$ representations is given by
$\Fp{q}=\bigoplus_{m=0}^\infty \Poly{m}{q}$.

\begin{definition}
Let  $\pi^{n_i}$ be the unitary representations of $\rU_{n_i}$, acting on $\Fp{n_i}$, $i=1,\dots,k$, as in the above discussion. The outer tensor product of the representations $\pi_{p_i}$, denoted $\pin$ is defined by
$$\pin(A_1,\dots A_k)=\pi^{n_1}(A_1)\otimes\cdots\otimes \pi^{n_k}(A_k)$$
for all $(A_1,\dots A_k)\in \UU=G$. Here $\pi^{n_1}(A_1)\otimes\cdots\otimes \pi^{n_k}(A_k)$ is the tensor product of the operators $ \pi^{n_i}(A_i)$ acting on the Hilbert spaces $\Fp{n_i}$.
\end{definition}

 The outer tensor product $\pin$ is a unitary representation of $G$ acting on  $\Fp{n_1}\otimes\cdots\otimes \Fp{n_k}=\F$. For more details see \cite{FO14}.
 
 \begin{lemma}\label{lem:intertwines}
Let $\varphi \in \Linfnn$. Then $T_\varphi$ intertwines with $\pin$.
 \end{lemma}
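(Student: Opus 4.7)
The plan is to reduce the intertwining statement on $\F$ to a commutation statement on the ambient space $L^2(\C^n,\lambda_n)$, where everything is transparent. For each $A=(A_1,\dots,A_k)\in G$, extend the action to a unitary operator $U_A$ on $L^2(\C^n,\lambda_n)$ by $(U_A f)(z)=f(A_1^{-1}z_{(1)},\dots,A_k^{-1}z_{(k)})$. The operator $U_A$ is unitary because, as noted in the preamble to the definition of $G$-invariance, the Gaussian measure $d\lambda_n$ is $G$-invariant. Moreover, since each $A_j^{-1}$ is holomorphic, $U_A$ preserves $\F$, and its restriction to $\F$ is by construction $\pi^{\vn}(A)$.

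Next I would establish two commutation relations. First, the Bergman projection $P\colon L^2(\C^n,\lambda_n)\to\F$ satisfies $U_A P=P U_A$. This is immediate: $U_A$ is unitary and preserves the closed subspace $\F$, hence also preserves $\F^\perp$, so it commutes with the orthogonal projection onto $\F$. Second, for $\varphi\in\Linfnn$ the multiplication operator $M_\varphi$ on $L^2(\C^n,\lambda_n)$ satisfies $U_A M_\varphi=M_\varphi U_A$. Indeed, for $f\in L^2$,
\[
(U_A M_\varphi f)(z)=\varphi(A_1^{-1}z_{(1)},\dots,A_k^{-1}z_{(k)})\,f(A_1^{-1}z_{(1)},\dots,A_k^{-1}z_{(k)})=\varphi(z)(U_A f)(z),
\]
where the last equality uses that $\varphi$ is $G$-invariant.

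Combining these and using $T_\varphi=P M_\varphi$ on $\F$, I would conclude
\[
\pi^{\vn}(A)T_\varphi=U_A P M_\varphi=P U_A M_\varphi=P M_\varphi U_A=T_\varphi\,\pi^{\vn}(A)
\]
on $\F$, which is exactly the intertwining claim. There is no real obstacle here; the lemma is essentially a formal consequence of two facts, both already built into the setup, namely the $G$-invariance of the measure $\lambda_n$ (giving unitarity of $U_A$ and hence commutation with $P$) and the $G$-invariance of the symbol $\varphi$ (giving commutation with $M_\varphi$). The only point worth being careful about is to verify commutation on the larger space $L^2(\C^n,\lambda_n)$ rather than on $\F$ directly, because $M_\varphi$ does not preserve $\F$, and then pull everything back by applying $P$ at the appropriate stage.
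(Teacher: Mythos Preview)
Your proof is correct and takes a genuinely different route from the paper. The paper argues directly from the integral representation of $T_\varphi$: it applies $T_\varphi$ to $\pi^{\vn}(A)(f_1\otimes\cdots\otimes f_k)$, performs the change of variables $w_{(j)}\mapsto A_j w_{(j)}$ in the integral, and then checks that the reproducing kernel transforms as $K_z(A_1w_{(1)},\dots,A_kw_{(k)})=K_{(A_1^{-1}z_{(1)},\dots,A_k^{-1}z_{(k)})}(w)$ using unitarity of the $A_j$, while $\varphi$ stays fixed by $G$-invariance; this yields the intertwining on simple tensors and hence everywhere by linearity and density.

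Your argument replaces this explicit kernel computation by two structural observations on the ambient space $L^2(\C^n,\lambda_n)$: unitarity of $U_A$ together with invariance of $\F$ forces $U_AP=PU_A$, and $G$-invariance of $\varphi$ forces $U_AM_\varphi=M_\varphi U_A$. This is more conceptual and avoids handling the kernel and simple tensors explicitly; it also makes transparent exactly which two hypotheses are being used and where. The paper's approach, in turn, has the virtue of being self-contained at the level of integrals and of exhibiting the kernel transformation law, which is itself a useful fact. Both routes ultimately encode the same content: the kernel identity in the paper is precisely the pointwise form of $U_AP=PU_A$ applied to $K_z$.
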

    
 \begin{proof}
Let $A_i\in \rU_{n_i}$ and $f_i\in \Fp{n_i}$, $i=1,\dots,m$. Notice that
\begin{align*}
    T&_{\varphi}  (\pin) (A_1,\dots A_k)  f_{1}\otimes\cdots\otimes f_{k}(z)= T_\varphi f(A_1^{-1}z_{(1)},\dots ,A_k^{-1}z_{(k)})\\
    &=\intL{n}{\varphi(w)f(A_1^{-1}z_{(1)})\cdots f(A_k^{-1}z_{(k)})\overline{k_z(w)}}{w}\\
    &=\intL{n}{\varphi(A_1w_{(1)},\dots A_{k}w_{(k)})f_{1}\otimes\cdots\otimes f_{k}(w)\overline{k_z(A_1w_{(1)},\dots A_{k}w_{(k)})}}{w}\\
    &(\text{by the change of variable $(w_{(1)},\dots w_{(k)})\rightarrow(A_1w_{(1)},\dots, A_kw_{(k)})$}).   
\end{align*}
Since 
\begin{align*}
    k_z(A_1w_{(1)},\dots A_{k}w_{(k)})& = e^{A_1w_{(1)}\overline{z_{(1)}}+\cdots +A_kw_{(k)}\overline{z_{(k)}}}\\
    &=e^{w_{(1)}\overline{(A^*z_{(1)})}+\cdots +w_{(k)}\overline{A_{k}^*z_{(k)}}}\\
    &=e^{w_{(1)}\overline{(A^{-1}z_{(1)})}+\cdots +w_{(k)}\overline{A_{k}^{-1}z_{(k)}}} \hspace{.5cm} \text{(as $A_i$ are unitary)}\\
    &=k_{(A_1^{-1}z_{(1)},...A_{k}^{-1}z_{(k)})}(w)
\end{align*}
and $\varphi$ is $G$-invariant, 
\begin{align*}
    T_\varphi  (\pi^{n_1}\times \cdots \times & \pi^{n_k})  (A_1,\dots A_k)  f_{1}\otimes\cdots\otimes f_{k}(z) \\
   &= \intL{n}{\varphi(w)f_{1}\otimes\cdots\otimes f_{k}(w)\overline{ k_{(A_1^{-1}z_{(1)},...A_{k}^{-1}z_{(k)})}(w)}}{w}\\
   &=(\pin)(A_1,\dots A_k)T_\varphi f(z). \qedhere
\end{align*}

 \end{proof}

 Since the tensor product of irreducible representations is irreducible (see Proposition 6.75 in \cite{FO14} for a proof), $\Fp{n}$ can be decomposed by irreducible sub-representations of $\pin$ as 
$$\Fp{n}=\bigoplus_{m_1,\dots,m_k=0}^\infty \polynn.$$ 
Hence if  $\varphi \in \Linfnn$, $T_\varphi|_{\polynn}=\eigenfnnvarphi(\vm)\id$ for some $\eigenfnnvarphi(\vm)\in \C$ for all $\vm=(m_1,\dots,m_k)\in \N_0^k$ by Schur's lemma.\\

As a consequence of Lemma \ref{lem:radialfn}, we identify the class of symbols $\Linfnn$ with  essentially bounded functions $a:\R_+^k\rightarrow \C$ and denote the corresponding Toeplitz operator and its spectrum by $T_{\vn,a}$ and $\eigenfnn$.

Since the above decomposition of $\Fp{n}$ is multiplicity free and $G$ is compact, the set of all operators that intertwine with $\pin$, denoted $\text{End}_G(\Fp{n})$, is commutative (see in particular Proposition 4.1 and Theorem 6.4 in \cite{DOQ15} for more details). Then as a consequence of Lemma \ref{lem:intertwines} we have the following corollary.

\begin{corollary}
The C$^*$-algebra generated by Toeplitz operators with $G$-invariant symbols is commutative.
\end{corollary}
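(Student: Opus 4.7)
The plan is to combine Lemma \ref{lem:intertwines} with Schur's lemma and the multiplicity-free decomposition of $\F$ already recorded in the excerpt. By Lemma \ref{lem:intertwines}, every $T_\varphi$ with $\varphi \in \Linfnn$ belongs to $\mathrm{End}_G(\F)$. Since
$$\F=\bigoplus_{\vm\in\N_0^k} \polynn,$$
and since the representations $\pi^{n_1}_{m_1}\otimes\cdots\otimes \pi^{n_k}_{m_k}$ are pairwise inequivalent as $\vm$ varies (the central $k$-torus acts by the character $(\zeta_1,\dots,\zeta_k)\mapsto \zeta_1^{-m_1}\cdots\zeta_k^{-m_k}\,\mathrm{id}$, which separates the summands), this decomposition is multiplicity free.

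The first step is to observe that Schur's lemma forces any $S\in\mathrm{End}_G(\F)$ to preserve each isotypic component, which in the multiplicity-free setting coincides with a single irreducible summand $\polynn$, and to act on it by a scalar $\lambda_{\vm}(S)\in\C$. In particular, for each $\varphi\in\Linfnn$, $T_\varphi$ is diagonalized by the given decomposition with eigenvalues $\eigenfnnvarphi(\vm)$, as already noted in the text.

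The second step is that any two operators of this form commute: on each summand $\polynn$ both act as scalars, so their commutator vanishes on a dense subspace of $\F$ and hence everywhere. The set
$$\mathcal{D}=\{S\in B(\F) : S|_{\polynn}=\lambda_{\vm}(S)\,\mathrm{id} \text{ for each } \vm\in\N_0^k\}$$
is a commutative, unital $*$-subalgebra of $B(\F)$ that is closed in the operator norm, because a norm limit of operators that are scalar on a fixed finite-dimensional subspace remains scalar on that subspace.

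Thus every Toeplitz operator with symbol in $\Linfnn$ lies in $\mathcal{D}$, and so does every $*$-polynomial in them; taking the norm closure, $\cT_G\subseteq \mathcal{D}$, which gives commutativity. The only subtle point is verifying multiplicity-freeness of the decomposition, but this is immediate from the distinct central characters noted above (or alternatively from dimension count when all $n_i>1$), so no real obstacle arises.
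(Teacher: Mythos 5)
Your argument is correct and is essentially the route the paper takes: Lemma \ref{lem:intertwines} puts each $T_\varphi$ in $\mathrm{End}_G(\F)$, and the multiplicity-free decomposition into the $\polynn$ together with Schur's lemma makes that commutant a commutative $C^*$-algebra, so $\cT_G$ is commutative. The paper simply outsources the last step to results in \cite{DOQ15} rather than writing out the explicit algebra $\mathcal{D}$ as you do.
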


\subsection{Computing eigenvalue functions}
%==============================
%================================

We denote by $\rS^{q} =\{x\in\R^{q+1}\mid |x|=1\}$ the $q$-dimensional sphere
in $\R^{q+1}$. We denote by $\sigma$ the
unique $\rU_{q+1}$-invariant  
measure on $\rS^{q}$ that satisfy the polar coordinates formula given by
$$\int_{\R^{q+1}}f(x) dx= \int_0^\infty \int_{\rS ^{q}}f(r\omega)d\sigma(\omega)r^{q}dr$$
for any $f\in L^1(\R^{q+1},dx)$ where $dx$ is the Lebesgue measure on $\R^{q+1}$. See section 2.7 in \cite{F99} for more details.

\begin{lemma} Let $f\in L^1(\C^q,dz)$. Then
\[\int_{\C^q}f(z)dz=\int_0^\infty \int_{S^{2q-1}} f(r\omega) d\sigma (\omega) r^{2q-1}dr .\]
In particular if $f(z)=a(r)$ is $\rU(q)$-invariant then
\[\int_{\C^q} f(z)dz=\sigma(\rS^{2q-1}) \int_0^\infty a(r)r^{2q-1}dr .\]
\end{lemma}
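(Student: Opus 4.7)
The plan is to reduce the lemma to the polar coordinates formula on $\R^{2q}$ that was used to define the measure $\sigma$ in the preceding paragraph. First, I would identify $\C^q$ with $\R^{2q}$ as real vector spaces via $z=x+iy \mapsto (x,y)$; under this identification, the Lebesgue measure $dz$ on $\C^q$ coincides with the Lebesgue measure on $\R^{2q}$, and the complex modulus $|z|$ coincides with the Euclidean norm on $\R^{2q}$. In particular, the sphere $\rS^{2q-1}\subset \C^q$ is the same as the sphere $\rS^{2q-1}\subset \R^{2q}$, and the measure $\sigma$ on it is the one already defined.

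Then I would apply the defining polar coordinates formula for $\sigma$ on $\rS^{2q-1}$, taking the ambient dimension to be $2q$ (i.e., replacing $q+1$ by $2q$ in the formula stated just above the lemma, so the Jacobian becomes $r^{2q-1}$). This immediately yields the first identity
\[
\int_{\C^q} f(z)\,dz \;=\; \int_0^\infty \int_{\rS^{2q-1}} f(r\omega)\, d\sigma(\omega)\, r^{2q-1}\,dr.
\]

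For the second assertion, the $\rU(q)$-invariance of $f$ together with Lemma \ref{lem:radialf} produces a measurable function $a\colon \R_+\to \C$ with $f(z)=a(|z|)$. Thus $f(r\omega) = a(r)$ is independent of $\omega\in \rS^{2q-1}$, so I can pull it outside the inner integral to obtain $\int_{\rS^{2q-1}} a(r)\,d\sigma(\omega) = a(r)\,\sigma(\rS^{2q-1})$, and substituting this into the first identity yields the second.

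No real obstacle arises here; the whole argument is just the standard reduction of complex polar coordinates to real polar coordinates on $\R^{2q}$ via the canonical real-linear isomorphism, combined with Lemma \ref{lem:radialf} to obtain the radial form $a(|z|)$ needed in the $\rU(q)$-invariant case.
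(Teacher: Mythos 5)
Your proof is correct and is exactly the standard argument the paper implicitly invokes by citing Folland's section 2.7: identify $\C^q$ with $\R^{2q}$ so that $dz$, the modulus, and the sphere all match their real counterparts, apply the defining polar-coordinates identity with ambient dimension $2q$, and in the invariant case use Lemma \ref{lem:radialf} to write $f(r\omega)=a(r)$ and pull it out of the inner integral. The paper offers no proof of this lemma, so there is nothing to compare beyond noting that your argument is the intended one.
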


%-------------------------

We recall the following well known fact:
\begin{lemma}\label{lem:Ip} Let $f,g\in\cF^2(\C^q)$ and $\varphi \in L^\infty (\C^q)$. Then
\[ \ip{T_\varphi f}{g}= \ip{\varphi f}{g}=\frac{1}{\pi^q}\int_{\C^q} \varphi(z)f(z)\overline{g(z)}e^{-|z|^2}dz.\]
In particular for $f=g=p_\alpha$ we get
\[\ip{T_\varphi p_\alpha}{p_\alpha} =\frac{1}{\pi^q} \int_{\C^q} \varphi (z)\prod_{j=1}^n |z_j|^{2\alpha_j}e^{-|z_j|^2}dz_1\cdots dz_n. \]
\end{lemma}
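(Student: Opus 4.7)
The plan is to exploit the factorization $T_\varphi = P \circ M_\varphi$, where $P$ denotes the Bergman projection from $L^2(\C^q,d\lambda_q)$ onto $\cF^2(\C^q)$ and $M_\varphi$ is the bounded multiplication operator. Since $\varphi \in L^\infty(\C^q)$ and $f \in \cF^2(\C^q) \subset L^2(\C^q, d\lambda_q)$, the product $\varphi f$ lies in $L^2(\C^q, d\lambda_q)$, so the inner products below are well defined.

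The first identity follows by observing that $P$ is the orthogonal projection onto $\cF^2(\C^q)$; hence $P$ is self-adjoint, and since $g \in \cF^2(\C^q)$ we have $Pg = g$. Thus I would write
\[
\ip{T_\varphi f}{g} = \ip{P(\varphi f)}{g} = \ip{\varphi f}{Pg} = \ip{\varphi f}{g},
\]
where all inner products are taken in $L^2(\C^q,d\lambda_q)$. Unpacking the definition of this inner product using $d\lambda_q(z) = \pi^{-q} e^{-|z|^2} dz$ then yields the stated integral formula.

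For the second assertion, I would specialize to $f = g = p_\alpha$. Writing $p_\alpha(z) = z_1^{\alpha_1}\cdots z_q^{\alpha_q}$ gives $|p_\alpha(z)|^2 = \prod_{j=1}^q |z_j|^{2\alpha_j}$, and factoring $e^{-|z|^2} = \prod_{j=1}^q e^{-|z_j|^2}$ via $|z|^2 = \sum_j |z_j|^2$ produces the product form displayed in the lemma.

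There is no real obstacle here: the proof reduces to the self-adjointness of $P$ together with the fact that $P$ acts as the identity on $\cF^2(\C^q)$, plus a direct substitution for the monomial case. The only hypothesis that needs checking is the integrability of $\varphi f \overline{g} e^{-|z|^2}$, which is immediate from $\|\varphi\|_\infty < \infty$ and the Cauchy--Schwarz inequality applied to $f,g \in L^2(\C^q,d\lambda_q)$.
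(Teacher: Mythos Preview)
Your proof is correct and follows essentially the same route as the paper: both arguments reduce the identity $\ip{T_\varphi f}{g}=\ip{\varphi f}{g}$ to the fact that $P$ is an orthogonal projection with $g$ in its range, and the monomial case is a direct substitution. Your write-up is slightly more explicit (spelling out $P^*=P$ and $Pg=g$, and checking integrability), but there is no substantive difference.
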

\begin{proof} Let $P: L^2(\C^q)\to \cF^2(\C^q)$ be the orthogonal projection. Then, as $g\in \cF^2(\C^q)$, we get
\[\ip{T_\varphi f}{g}=\ip{P(\varphi f)}{g} = \ip{\varphi f}{g}.\qedhere\]
\end{proof}

The following is well known and can be found for more general
situations   in \cite{FK94} and other places. We also point to
\cite{F01} for a general discussion on how to
integrate polynomials over spheres:

\begin{lemma}\label{lem:norms} Let $p_{\alpha}\in \rP^m(\C^q) $. Then
$\displaystyle \|p_{\alpha}|_{\rS^{2q-1}}\|^2 =  \frac{2\pi^q}{\Gamma (q+m)} \|p_{\alpha}\|^2_{\cF}$.
In particular  
\[\|q_\alpha|_{\rS^{2q-1}}\|^2 =\frac{2\pi^q}{\Gamma (q+m)}.\]
\end{lemma}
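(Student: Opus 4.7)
The plan is to exploit the homogeneity of $p_\alpha$ together with the polar coordinate formula from the previous lemma, to factor the Fock norm as a product of a radial Gaussian integral and the $L^2$-norm on the sphere. First I would observe that since $p_\alpha$ is homogeneous of degree $m$, one has $p_\alpha(r\omega) = r^m p_\alpha(\omega)$ for all $r > 0$ and $\omega \in \rS^{2q-1}$, so $|p_\alpha(r\omega)|^2 = r^{2m}|p_\alpha(\omega)|^2$.

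Next I would start from the definition $\|p_\alpha\|^2_{\cF} = \frac{1}{\pi^q}\int_{\C^q} |p_\alpha(z)|^2 e^{-|z|^2}\,dz$ and insert polar coordinates via the preceding lemma. Because the Gaussian weight depends only on $r = |z|$ and the polynomial factors as above, Fubini separates the integral into
\[
\|p_\alpha\|^2_{\cF} \;=\; \frac{1}{\pi^q}\left(\int_0^\infty r^{2m+2q-1} e^{-r^2}\,dr\right)\left(\int_{\rS^{2q-1}}|p_\alpha(\omega)|^2\,d\sigma(\omega)\right).
\]
The substitution $u = r^2$, $du = 2r\,dr$ evaluates the radial integral to $\tfrac{1}{2}\Gamma(m+q)$. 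Solving for the sphere norm yields
\[
\|p_\alpha|_{\rS^{2q-1}}\|^2 \;=\; \frac{2\pi^q}{\Gamma(q+m)}\,\|p_\alpha\|^2_{\cF},
\]
which is the asserted identity.

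The "in particular" clause is then immediate: since $q_\alpha = p_\alpha/\sqrt{\alpha!}$ and $\|q_\alpha\|^2_{\cF} = 1$ by the orthonormality lemma, the factor $\|p_\alpha\|^2_{\cF}$ on the right is $1$ for $q_\alpha$, giving $\|q_\alpha|_{\rS^{2q-1}}\|^2 = 2\pi^q/\Gamma(q+m)$. There is no genuine obstacle here; the only delicate point is bookkeeping the constants correctly, in particular remembering that $\sigma$ is the unnormalized surface measure satisfying the polar coordinate formula stated above (so $\sigma(\rS^{2q-1}) = 2\pi^q/\Gamma(q)$), and not a probability measure, which is why the $2\pi^q$ appears in the numerator rather than the reciprocal $\Gamma(q)$ of a normalization constant.
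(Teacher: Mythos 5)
Your argument is exactly the paper's: factor the Fock norm via polar coordinates using the homogeneity $p_\alpha(r\omega)=r^m p_\alpha(\omega)$, evaluate the radial Gaussian integral by the substitution $u=r^2$ to get $\tfrac12\Gamma(m+q)$, and solve for the sphere norm. The bookkeeping of the constant (using the unnormalized measure $\sigma$ with $\sigma(\rS^{2q-1})=2\pi^q/\Gamma(q)$) matches the paper's conventions, so the proof is correct and essentially identical.
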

\begin{proof}  We have $p_{\alpha} (r\omega)=r^mp(\omega) $, $r>0$. Hence: 
\begin{align*}
\|p_{\alpha}\|^2_{\cF}&= \frac{1}{\pi^q} \int_{0}^\infty \int_{\rS^{2q-1}}|p_{\alpha}(r\omega )|^2d\sigma (\omega)
e^{-r^2}r^{2q-1}dr\\
&=   \frac{1}{2\pi^q}\left(\int_{\rS^{2q-1}} |p_{\alpha}(\omega )|^2d\sigma(\omega)\right)
\left(\int_0^\infty r^{2m+2q-2}e^{-r^2}(2rdr)\right)\\
&=\frac{1}{2\pi^q }\|p_{\alpha}|_{\rS^{2q-1}}\|^2\int_0^\infty u^{m+q-1}e^{-u}du\qquad u=r^2,\, du =2rdr\\
&= \frac{1}{2\pi^q }\|p_{\alpha}|_{\rS^{2q-1}}\|^2\Gamma (m+q).\qedhere
\end{align*}
\end{proof}

We recall that the $C^*$-algebra $\cT_G$ generated by $G$-invariant bounded symbols  is
commutative and acts by scalars on each of the spaces $\polynn$.

\begin{theorem}\label{thm:specnn}
Assume  $\varphi \in L^\infty (\C^n)^G$  let $\vm =(m_1,\dots,m_k)\in {\N_0^k}$. Then 
$$ T_\varphi|_{\polynn}=\gamma_{n,\varphi} (\vm)\id=\gamma_{n,a}(\vm )\id $$
where
\[
 \gamma_{\vn,a}(\vm) =
    \frac{1}{ (\vm + \vn-\textbf{1})! }\int_{\R_+^k} a_\varphi (\sqrt{\vr})\vr^{\vm+\vn-\textbf{1}} 
    e^{-(r_1+\cdots  +r_k)}dr_1 \dots dr_k.\] 
where we use the notations $\sqrt{\vr}=(\sqrt{r_1},\dots,\sqrt{r_k})$ and $\textbf{1}=(1,\dots, 1)\in \N^k$.
\end{theorem}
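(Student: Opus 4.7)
My plan is to leverage Schur's lemma together with the irreducibility of $\polynn$ as a $\pin$-subrepresentation: since Lemma \ref{lem:intertwines} gives $T_\varphi\in\mathrm{End}_G(\F)$, the restriction $T_\varphi|_{\polynn}$ must be a scalar, call it $\gamma_{\vn,\varphi}(\vm)$. To identify the scalar I pick a convenient unit test vector, namely the factored normalized monomial $q_{\boldsymbol{\alpha}}(z)=\prod_{j=1}^k q_{\alpha_j}(z_{(j)})$, where $\alpha_j\in\N_0^{n_j}$ is any multi-index with $|\alpha_j|=m_j$. Then $q_{\boldsymbol{\alpha}}\in\polynn$ has unit norm, and the eigenvalue equals the single inner product $\ip{T_\varphi q_{\boldsymbol{\alpha}}}{q_{\boldsymbol{\alpha}}}$.

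Substituting into Lemma \ref{lem:Ip} and using Lemma \ref{lem:radialfn} to write $\varphi(z)=a(|z_{(1)}|,\ldots,|z_{(k)}|)$, this inner product becomes
\begin{equation*}
\gamma_{\vn,\varphi}(\vm)=\frac{1}{\pi^n}\int_{\C^n} a(|z_{(1)}|,\ldots,|z_{(k)}|)\prod_{j=1}^k\frac{|p_{\alpha_j}(z_{(j)})|^2}{\alpha_j!}\,e^{-|z_{(j)}|^2}\,dz.
\end{equation*}
Fubini splits the $\C^n$ integral into iterated integrals over the factors $\C^{n_j}$, after which I pass to polar coordinates $z_{(j)}=r_j\omega_j$ on each factor, with $r_j>0$ and $\omega_j\in\rS^{2n_j-1}$. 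Homogeneity of $p_{\alpha_j}$ gives $|p_{\alpha_j}(r_j\omega_j)|^2=r_j^{2m_j}|p_{\alpha_j}(\omega_j)|^2$, and since $a$ depends only on the radii the angular integrations separate off; Lemma \ref{lem:norms} evaluates each as $\int_{\rS^{2n_j-1}}|p_{\alpha_j}(\omega_j)|^2\,d\sigma(\omega_j)=\frac{2\pi^{n_j}}{\Gamma(n_j+m_j)}\,\alpha_j!$.

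The factor $\alpha_j!$ cancels against the normalization $1/\alpha_j!$, and the $\pi^{n_j}$ accumulate to $\pi^n$ which cancels the prefactor $1/\pi^n$, leaving
\begin{equation*}
\gamma_{\vn,\varphi}(\vm)=\prod_{j=1}^k\frac{2}{\Gamma(n_j+m_j)}\int_{\R_+^k}a(r_1,\ldots,r_k)\prod_{j=1}^k r_j^{2m_j+2n_j-1}e^{-r_j^2}\,dr_1\cdots dr_k.
\end{equation*}
A final substitution $u_j=r_j^2$ in each radial variable absorbs the $2^k$, converts $a$ into $a(\sqrt{\vr})$, and turns the radial weight into $\vr^{\vm+\vn-\textbf{1}}e^{-(r_1+\cdots+r_k)}$. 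Using $(\vm+\vn-\textbf{1})!=\prod_{j=1}^k\Gamma(n_j+m_j)$ produces the claimed formula. There is no substantive obstacle; the only thing to watch is the bookkeeping of the constants $\pi^n$, $2^k$, $\alpha_j!$ and $\Gamma(n_j+m_j)$, which collapse precisely because the normalization used in defining $q_{\boldsymbol{\alpha}}$ is the one consistent with Lemma \ref{lem:norms}.
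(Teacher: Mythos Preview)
Your proof is correct and follows essentially the same route as the paper: pick a unit monomial $q_{\boldsymbol{\alpha}}\in\polynn$, compute $\gamma_{\vn,\varphi}(\vm)=\ip{T_\varphi q_{\boldsymbol{\alpha}}}{q_{\boldsymbol{\alpha}}}$ via Lemma~\ref{lem:Ip}, then apply polar coordinates on each $\C^{n_j}$, use Lemma~\ref{lem:norms} for the spherical integrals, and finish with the substitution $u_j=r_j^2$. The only cosmetic difference is that you make the Schur-lemma step (via Lemma~\ref{lem:intertwines}) explicit at the outset, whereas the paper establishes it in the discussion preceding the theorem and invokes it tacitly.
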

\begin{proof} Assume that $\varphi (z)= a(|z_{(1)}|,\ldots ,|z_{(k)}|)=a(r_1,\ldots ,r_k)$. Let $\alpha_j\in\N_0^{n_j}$ such that $|\alpha_j|=m_j$ and let $q_{\alpha_j}\in \Poly{m_j}{n_j}$ be the normalized monomials defined in Lemma \ref{lem:norms}. Recall from Lemma \ref{lem:norms} that
\[\|q_{\alpha_j}|_{\rS^{2n_j-1}}\|^2=\frac{2\pi^{n_j}}{\Gamma(n_j+m_j)} \]
and hence
$$\prod_{j=1}^k \|q_{\alpha_j}|_{\rS^{2n_j-1}}\|^2=\frac{2^k\pi^n}{(\vm+\vn-\textbf{1})!}.$$
Let $\boldmath{\alpha} = (\alpha_1,\ldots, \alpha_k)$ and define the normalized monomial $q_{\boldmath{\alpha}}$ by
$$q_{\boldmath{\alpha}}(z)=\prod_{j=1}^k q_{\alpha_j}(z_{(j)}).$$
Then $q_{\boldmath{\alpha}}\in\polynn$ and hence
\[\gamma_{n,\varphi}(\vm) =\langle 
T_\varphi q_\alpha, q_\alpha \rangle.
\]

As $\varphi$ is $G$-invariant and bounded we can use polar-coordinates on $\C^{n_j}$, Fubini's theorem, Lemma \ref{lem:radialfn}, Lemma \ref{lem:Ip} and the fact that $q_{\alpha_j}$ are homogeneous of order $m_j$,
to write
\begin{align*}
\ip{T_\varphi q_\alpha}{ q_\alpha }&=\frac{1}{\pi^n}\int_{\C^n}\varphi (z)|q_{\alpha}(z) |^2 e^{-|z|^2}dz\\
&= \frac{1}{\pi^n}\int_{\C^{n_1}}\!\!\cdots \!\! \int_{\C^{n_k}} 
a (|z_{(1)}|,\ldots ,|z_{(k)}|)\prod_{j=1}^k |q_{\alpha_j}(z_{(j)})|^2 e^{-|z_{(j)}|^2}dz_{(1)}\ldots dz_{(k)} \\
&=\frac{1}{\pi^n}\int_0^\infty\!\!\cdots \!\!\int_0^\infty a (\vr)\prod_{j=1}^k r_j^{2m_j+2n_j-1}e^{-r_j^2}dr_1\ldots dr_k
  \prod_{j=1}^k \|q_{\alpha_j}|_{\rS^{2n_j-1}}\|^2\\
 &= \frac{2^k}{(\vm+\vn-\textbf{1} )!}\int_0^\infty\!\!\cdots \!\!\int_0^\infty a (\vr )\prod_{j=1}^k r_j^{2m_j+2n_j-1}e^{-r_j^2}dr_1\ldots dr_k\\ 
&= \frac{1}{( \vm+\vn-\textbf{1} )! }\int_0^\infty\!\!\cdots \!\!\int_0^\infty a (\sqrt{\vr})\prod_{j=1}^k r_j^{m_j+n_j-1} e^{-r_j} 
dr_1\cdots dr_k,\end{align*}
where we in the last line used the substitutions $u=r_j^2$ and hence $du =2r_jdr_j$.
\end{proof}

\begin{corollary} \label{coro:eigenshift}
If $\vn=(n_1,\dots,n_k)$ and $\vm=(m_1,\dots m_k)$, then
$$\eigenfnn(\vm) = \eigenfn(m_1+n_1-1,\dots,m_k+n_k-1)=\eigenfn(\vm+\vn-\textbf{1}).$$
\end{corollary}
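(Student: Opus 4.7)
The plan is to read this off directly from the closed-form expression in Theorem~\ref{thm:specnn}, so the argument is essentially a reparametrization. First I would record the formula from that theorem in the form
\[\gamma_{\vn,a}(\vm) = \frac{1}{(\vm+\vn-\textbf{1})!}\int_{\R_+^k} a(\sqrt{\vr})\,\vr^{\vm+\vn-\textbf{1}}\,e^{-(r_1+\cdots+r_k)}\,dr_1\cdots dr_k,\]
and then specialize the same theorem to $\vn=\textbf{1}$, which gives
\[\gamma_{\textbf{1},a}(\vp) = \frac{1}{\vp!}\int_{\R_+^k} a(\sqrt{\vr})\,\vr^{\vp}\,e^{-(r_1+\cdots+r_k)}\,dr_1\cdots dr_k\]
for any $\vp\in\N_0^k$. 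Substituting $\vp=\vm+\vn-\textbf{1}$ into this second formula reproduces the first one verbatim, which is the claimed identity $\eigenfnn(\vm)=\eigenfn(\vm+\vn-\textbf{1})$.

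There is no real technical obstacle; the corollary is a pure bookkeeping observation that the shift by $\vn-\textbf{1}$ in the exponent of $\vr$ inside the integrand is absorbed by re-indexing the point at which the simpler eigenvalue function $\gamma_{\textbf{1},a}$ is evaluated, and the accompanying factorial in the denominator keeps pace with the exponent automatically. The coordinatewise form $\eigenfn(m_1+n_1-1,\dots,m_k+n_k-1)$ is then just the entrywise rewriting of $\eigenfn(\vm+\vn-\textbf{1})$, since $\textbf{1}=(1,\dots,1)\in\N^k$.
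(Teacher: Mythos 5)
Your proposal is correct and matches the paper's own (implicit) reasoning: the corollary is stated immediately after Theorem~\ref{thm:specnn} precisely because it is a direct specialization-and-substitution in the closed-form integral formula, which is exactly what you do.
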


Note that eigenvalue functions $\eigenfn$ given above corresponds to Toeplitz operators with separately radial symbols. In particular we obtain the following special cases:
\begin{corollary}[Grudsky and Vasilevski \cite{GV02}]  Assume that $n=1$
\[\gamma_{1,a}(m) =\frac{1}{m! }\int_0^\infty a(\sqrt{r}) r^{m}e^{-r}dr.\] 
\end{corollary}
If $k=1$, then $\vn=(n)$ and we have the following corollary.
\begin{corollary} If $k=1$ (radial symbols) then
\[\gamma_{n,a}(m) =\frac{1}{(m+n-1)!} \int_0^\infty a(\sqrt{r})r^{m+n-1}e^{-r}dr
= \gamma_{1,a}(m+n-1).\]
\end{corollary}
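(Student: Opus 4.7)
The corollary is an immediate specialization of the preceding theorem and corollary to the case $k=1$, so my plan is simply to unpack the multi-index notation and cite the earlier results rather than redo any integration.

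First, I would start from Theorem \ref{thm:specnn} and specialize to $k=1$. In that case $\vn=(n)$, $\vm=(m)$, and $\textbf{1}=(1)$, so the multi-index factorial $(\vm+\vn-\textbf{1})!$ reduces to the ordinary factorial $(m+n-1)!$, the monomial $\vr^{\vm+\vn-\textbf{1}}$ becomes $r^{m+n-1}$, and the $k$-fold integral over $\R_+^k$ collapses to a single integral over $\R_+$. The exponential weight $e^{-(r_1+\cdots+r_k)}$ reduces to $e^{-r}$. This yields the first equality
\[
\gamma_{n,a}(m)=\frac{1}{(m+n-1)!}\int_0^\infty a(\sqrt{r})r^{m+n-1}e^{-r}\,dr
\]
directly from the statement of Theorem \ref{thm:specnn}, with no further computation.

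For the second equality, I would appeal to Corollary \ref{coro:eigenshift}, which for $k=1$ reads $\gamma_{n,a}(m)=\gamma_{1,a}(m+n-1)$. Combining this with the Grudsky--Vasilevski formula from the previous corollary, namely $\gamma_{1,a}(j)=\frac{1}{j!}\int_0^\infty a(\sqrt{r})r^{j}e^{-r}\,dr$, applied at $j=m+n-1$, recovers the same integral expression and confirms the identification $\gamma_{n,a}(m)=\gamma_{1,a}(m+n-1)$.

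There is no real obstacle here; the only point requiring mild care is to make sure the notational translation between the boldface multi-index conventions ($\vm$, $\vn$, $\textbf{1}$) and the scalar one-variable symbols is consistent, in particular that $\vn-\textbf{1}$ specializes to $n-1$ and not to $n$. Once that is checked, both equalities in the corollary follow instantly from Theorem \ref{thm:specnn} and Corollary \ref{coro:eigenshift}.
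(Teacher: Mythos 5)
Your proof is correct and matches the paper's intent exactly: the corollary is stated without proof precisely because it is the $k=1$ specialization of Theorem \ref{thm:specnn} together with Corollary \ref{coro:eigenshift}, which is exactly what you do. Your care in noting that $\vn-\textbf{1}$ specializes to $n-1$ is appropriate, since an earlier (commented-out) draft of the paper had the off-by-one error $\gamma_{1,a}(m+n)$ there.
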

If $n=k$, then $\vn=(1,\dots,1)=\textbf{1}$ and we have the following corollary. 
\begin{corollary}
If $n=k$ (separately radial symbols) then
$$\eigenfn(\vm) =
    \frac{1}{\vm ! }\int_{\R_+^k}  a_\varphi (\sqrt{\vr})\vr^{\vm} 
    e^{-(r_1+\cdots  +r_k)}dr_1 \dots dr_k.$$
\end{corollary}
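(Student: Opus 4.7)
This corollary is an immediate specialization of Theorem \ref{thm:specnn}, so the plan is essentially a one-line substitution. Concretely, I would start from the formula
\[
\gamma_{\vn,a}(\vm) = \frac{1}{(\vm+\vn-\textbf{1})!}\int_{\R_+^k} a_\varphi(\sqrt{\vr})\vr^{\vm+\vn-\textbf{1}} e^{-(r_1+\cdots+r_k)}\,dr_1\cdots dr_k
\]
already established in Theorem \ref{thm:specnn}, and apply it under the hypothesis $n=k$.

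The hypothesis $n = k$ forces $n_1=\cdots=n_k=1$, i.e.\ $\vn=\textbf{1}$. Substituting this into the theorem gives $\vn-\textbf{1}=\textbf{0}$, so the exponent becomes $\vm+\vn-\textbf{1}=\vm$ and the factorial becomes $(\vm+\vn-\textbf{1})!=\vm!$. Writing out $\vr^{\vm}=r_1^{m_1}\cdots r_k^{m_k}$ then yields the claimed formula
\[
\gamma_{\textbf{1},a}(\vm) = \frac{1}{\vm!}\int_{\R_+^k} a_\varphi(\sqrt{\vr})\vr^{\vm} e^{-(r_1+\cdots+r_k)}\,dr_1\cdots dr_k.
\]

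There is no real obstacle here; the whole content sits inside Theorem \ref{thm:specnn}, and the $G$-invariance of $\varphi$ in this case reduces (by Lemma \ref{lem:radialfn} with each $n_j=1$) to the statement that $\varphi$ is invariant under the coordinatewise torus action, i.e.\ $\varphi$ depends only on $(|z_1|,\dots,|z_k|)$ — which is exactly the standard notion of a separately radial symbol, confirming that the specialization $n=k$ is precisely the separately radial case. One could optionally remark that the resulting eigenvalue function factors as a product over coordinates when $a$ itself factors, connecting this case directly to $k$ independent copies of the one-dimensional Grudsky--Vasilevski formula, but this is not needed for the corollary itself.
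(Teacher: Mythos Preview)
Your proposal is correct and is exactly the paper's approach: the corollary is stated without proof, preceded only by the remark that $n=k$ forces $\vn=(1,\dots,1)=\textbf{1}$, which is precisely the substitution you carry out. There is nothing to add.
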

\begin{corollary} \label{coro:eigenproduct}If $a(r_1,\ldots ,r_k)= \prod_{j=1}^k a_j(r_j)$ then
\[\gamma_{n,a}(m_1,\ldots ,m_k) =  \prod_{j=1}^k \gamma_{1,a_j}(m_j+n_j-1).\]
\end{corollary}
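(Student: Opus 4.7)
The plan is to derive the corollary as a direct consequence of Theorem \ref{thm:specnn} by exploiting the multiplicative structure of the symbol $a$ via Fubini's theorem. First I would substitute the assumed factorization $a(r_1,\ldots,r_k)=\prod_{j=1}^k a_j(r_j)$ into the formula
\[
\gamma_{\vn,a}(\vm) = \frac{1}{(\vm+\vn-\textbf{1})!}\int_{\R_+^k} a(\sqrt{\vr})\,\vr^{\vm+\vn-\textbf{1}} e^{-(r_1+\cdots+r_k)}\,dr_1\cdots dr_k
\]
from Theorem \ref{thm:specnn}. The key observation is that every factor on the right factors across the $k$ coordinates: $a(\sqrt{\vr}) = \prod_j a_j(\sqrt{r_j})$, the monomial $\vr^{\vm+\vn-\textbf{1}} = \prod_j r_j^{m_j+n_j-1}$, the exponential $e^{-(r_1+\cdots+r_k)} = \prod_j e^{-r_j}$, and the multi-factorial $(\vm+\vn-\textbf{1})! = \prod_j (m_j+n_j-1)!$.

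Next I would invoke Fubini's theorem to split the $k$-fold integral into a product of one-dimensional integrals. The hypothesis that each $a_j$ is essentially bounded, combined with the integrability of $r^{m_j+n_j-1} e^{-r}$ on $\R_+$, ensures the integrand is absolutely integrable on $\R_+^k$, so the application of Fubini is routine. This yields
\[
\gamma_{\vn,a}(\vm) = \prod_{j=1}^k \left( \frac{1}{(m_j+n_j-1)!}\int_0^\infty a_j(\sqrt{r_j})\, r_j^{m_j+n_j-1} e^{-r_j}\,dr_j \right).
\]

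Finally, I would recognize each factor as $\gamma_{1,a_j}(m_j+n_j-1)$, using the Grudsky--Vasilevski corollary (the $n=1$ case) evaluated at the shifted index $m_j+n_j-1$. This identification completes the proof. There is no genuine obstacle here; the corollary is essentially a bookkeeping consequence of the product structure of $a$ and the tensor-product decomposition already implicit in the integral formula of Theorem \ref{thm:specnn}.
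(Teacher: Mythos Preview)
Your proposal is correct and matches the paper's (implicit) reasoning: the corollary is stated without proof precisely because it follows immediately from the integral formula of Theorem~\ref{thm:specnn} by factoring the integrand and applying Fubini, exactly as you outline. There is nothing to add.
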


%===========================================================
\section{The space \texorpdfstring{$\RO$}{Cbu}}

In this section we introduce C$^*$-algebra $\RO$, the set of all bounded functions on $\N_0^k\times \N_0^k$ that are uniformly continuous with respect to the square-root metric. These are sometimes called \textit{Square root-slowly oscillating} functions.

For $k\in \N$, let $\rho_k:\N_0^k\times \N_0^k \rightarrow [0,\infty)$ be given by
$$\rho(\vm,\vm')= |\sqrt{m_1}-\sqrt{m'_1}|+\cdots +|\sqrt{m_k}-\sqrt{m'_k}|.$$
for all $\vm=(m_1,\dots,m_k),\vm'=(m'_1,\dots,m'_k)\in \N_0^k$
Then $\rho_k$ is a metric on $\N_0^k$. \textit{The modulus of continuity} with respect to the metric $\rho_k$ of a function $\sigma \in l^\infty(\N_0^k)$ is the function $\omega_{\rho_k, \sigma}:[0,\infty)\rightarrow [0,\infty)$ given by
$$\omega_{\rho_k, \sigma}(\delta)= \sup\{|\sigma(\vm)-\sigma(\vm')|:\rho(\vm,\vm')\leq \delta\}.$$

$\RO$ is the set of all bounded functions on $\N_0^k$ that are uniformly continuous with respect to the square-root metric $\rho_k$:
$$\big\{\sigma\in l^\infty(\N_0^k) : \lim_{\delta\rightarrow 0}\omega_{\rho_k, \sigma}(\delta)=0  \big\}.$$ 
$\RO$ is a closed subalgebra of $l^\infty(\N_0^k)$.\\

\begin{definition}\label{def:shifts}
Let $\sigma \in\RO$ and let $\textbf{s}=(s_1\dots,s_k)\in\N_0^k$. We define the left and right shift operators on $\RO$ with respect to $\textbf{s}$, denoted $\tau_L^{\textbf{s}}$ and $\tau_R^{\textbf{s}}$ respectively,  by 
$$\tau_L^{\textbf{s}}\sigma(\vm)=\sigma(\vm+\textbf{s}) \quad \text{and} \quad
\tau_R^{\textbf{s}}\sigma(\vm)=\left\{
\begin{array}{ll}
      \sigma(\vm-\textbf{s}) & ;m_i\geq s_i\ \forall \ i=1,\dots,k\\
      0 & ;otherwise\\
\end{array} 
\right.$$
for all $\vm=(m_1,\dots,m_k)\in \N_0^k$.
\end{definition}

Now we present the following two lemmas that will be used in section \ref{sec:densitynn}.

\begin{lemma}\label{lem:lshiftop}
Let $\sigma \in \RO$ and $\textbf{s}=(s_1\dots,s_k)\in\N_0^k$. Then $ \tau_L^{\textbf{s}} \sigma \in \RO.$
\end{lemma}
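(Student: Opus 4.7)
The plan is to reduce everything to a single scalar inequality for the square-root function and then read off uniform continuity directly from the modulus of continuity of $\sigma$. Boundedness of $\tau_L^{\textbf{s}}\sigma$ is immediate, since $\|\tau_L^{\textbf{s}}\sigma\|_\infty \le \|\sigma\|_\infty$, so the whole task is to control $\omega_{\rho_k,\tau_L^{\textbf{s}}\sigma}$.

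The heart of the argument is the following elementary claim: for all $a,b,s\ge 0$,
\[ |\sqrt{a+s}-\sqrt{b+s}|\le |\sqrt{a}-\sqrt{b}|. \]
One proves this by assuming $a\ge b$ and writing both sides as
\[ \sqrt{a+s}-\sqrt{b+s}=\frac{a-b}{\sqrt{a+s}+\sqrt{b+s}},\qquad \sqrt{a}-\sqrt{b}=\frac{a-b}{\sqrt{a}+\sqrt{b}}, \]
and noting $\sqrt{a+s}+\sqrt{b+s}\ge \sqrt{a}+\sqrt{b}$ (equivalent to the concavity/subadditivity of the square root). Summing this componentwise gives the key metric inequality
\[ \rho_k(\vm+\textbf{s},\vm'+\textbf{s})\le \rho_k(\vm,\vm') \]
for all $\vm,\vm'\in\N_0^k$, i.e., the translation $\vm\mapsto \vm+\textbf{s}$ is $1$-Lipschitz with respect to $\rho_k$.

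With this in hand, the uniform continuity is automatic. For any $\delta>0$ and any $\vm,\vm'\in\N_0^k$ with $\rho_k(\vm,\vm')\le\delta$ we have $\rho_k(\vm+\textbf{s},\vm'+\textbf{s})\le\delta$ as well, so
\[ |\tau_L^{\textbf{s}}\sigma(\vm)-\tau_L^{\textbf{s}}\sigma(\vm')|=|\sigma(\vm+\textbf{s})-\sigma(\vm'+\textbf{s})|\le \omega_{\rho_k,\sigma}(\delta). \]
Taking the supremum over such pairs yields $\omega_{\rho_k,\tau_L^{\textbf{s}}\sigma}(\delta)\le \omega_{\rho_k,\sigma}(\delta)$, and since $\sigma\in\RO$ the right-hand side tends to $0$ as $\delta\to 0^+$. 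Hence $\tau_L^{\textbf{s}}\sigma\in\RO$.

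I do not anticipate a real obstacle here; the only moderately delicate point is verifying the scalar inequality $|\sqrt{a+s}-\sqrt{b+s}|\le |\sqrt{a}-\sqrt{b}|$, and the argument above handles it in one line. Everything else is just summation over coordinates and an appeal to the definition of $\RO$.
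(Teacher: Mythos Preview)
Your proof is correct and follows essentially the same route as the paper: both establish the coordinate-wise inequality $|\sqrt{m_i+s_i}-\sqrt{m'_i+s_i}|\le|\sqrt{m_i}-\sqrt{m'_i}|$ and then conclude $\omega_{\rho_k,\tau_L^{\textbf{s}}\sigma}(\delta)\le \omega_{\rho_k,\sigma}(\delta)$. The only difference is that you spell out the rationalization argument explicitly, whereas the paper simply appeals to ``observing the square-root function.''
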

\begin{proof}
Boundedness of $\tau_L^{\textbf{s}}\sigma$ follows easily from the boundedness of $\sigma$. Let $\vm=(m_1,\dots,m_k),\\ \vm'=(m'_1,\dots,m'_k)\in \N_o^k$.
By observing the square-root function, we have, $$|\sqrt{m_i+s_i}-\sqrt{m'_i+s_i}|\leq|\sqrt{m_i}-\sqrt{m'_i}|, \text{ for } i=1,\dots k.$$ Hence
$\omega_{\rho_k,\tau_L^{\textbf{s}}\sigma}(\delta)\leq \omega_{\rho_k,\sigma}(\delta)$.
\end{proof}

\begin{lemma}\label{lem:rshiftop}
Let $\sigma \in \RO$ and $\textbf{s}=(s_1\dots,s_k)\in\N_0^k$. Then $ \tau_R^{\textbf{s}}\sigma \in \RO.$

\end{lemma}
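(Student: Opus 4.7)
The plan is to directly verify both boundedness and uniform continuity of $\tau_R^{\textbf{s}}\sigma$ in the square-root metric. Boundedness is immediate since $\tau_R^{\textbf{s}}\sigma$ takes values in the range of $\sigma$ together with $\{0\}$. For the uniform continuity, fix $\epsilon > 0$, use $\sigma \in \RO$ to pick $\eta > 0$ with $\rho_k(\vn, \vn') < \eta$ implying $|\sigma(\vn) - \sigma(\vn')| < \epsilon$, and seek $\delta > 0$ so that $\rho_k(\vm, \vm') < \delta$ forces $|\tau_R^{\textbf{s}}\sigma(\vm) - \tau_R^{\textbf{s}}\sigma(\vm')| < \epsilon$. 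I would split on the positions of $\vm$ and $\vm'$ relative to the threshold $\textbf{s}$, writing $\vm \geq \textbf{s}$ to mean $m_i \geq s_i$ for every $i$. When neither $\vm$ nor $\vm'$ satisfies $\geq \textbf{s}$, both shifted values are zero and the difference is $0$. The mixed case (exactly one of them $\geq \textbf{s}$) can be ruled out outright by insisting that $\delta < \delta_0$, where
$$\delta_0 := \min_{\{i\,:\,s_i \geq 1\}}\bigl(\sqrt{s_i} - \sqrt{s_i - 1}\bigr) > 0;$$
indeed, if $m_{i_0} \geq s_{i_0}$ while $m'_{i_0} < s_{i_0}$ (necessarily $s_{i_0} \geq 1$), then $|\sqrt{m_{i_0}} - \sqrt{m'_{i_0}}| \geq \sqrt{s_{i_0}} - \sqrt{s_{i_0} - 1} \geq \delta_0$, contradicting $\rho_k(\vm, \vm') < \delta$.

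The remaining principal case is $\vm, \vm' \geq \textbf{s}$, where the difference equals $|\sigma(\vm - \textbf{s}) - \sigma(\vm' - \textbf{s})|$ and I would like to invoke the uniform continuity of $\sigma$ at the shifted points. The obstacle is that the map $\vm \mapsto \vm - \textbf{s}$ can grossly inflate the $\rho_k$-metric: e.g.\ $\sqrt{s+1} - \sqrt{s}$ is tiny when $s$ is large, yet $\sqrt{1} - \sqrt{0} = 1$ after the shift. The observation that defuses this is that if $m_i \neq m'_i$ and $|\sqrt{m_i} - \sqrt{m'_i}| \leq \delta$ with $\delta \leq 1/3$, then both $\sqrt{m_i}, \sqrt{m'_i} \geq 1/(3\delta)$, because consecutive integers satisfy $\sqrt{m+1} - \sqrt{m} = 1/(\sqrt{m+1} + \sqrt{m})$ and the hypothesis forces $\sqrt{m_i + 1} + \sqrt{m_i} \geq 1/\delta$. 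In such a case the shift defect is controllable,
$$\sqrt{m_i} - \sqrt{m_i - s_i} = \frac{s_i}{\sqrt{m_i} + \sqrt{m_i - s_i}} \leq \frac{s_i}{\sqrt{m_i}} \leq 3 s_i \delta,$$
and similarly for $m'_i$. A coordinatewise triangle inequality then yields
$$\rho_k(\vm - \textbf{s}, \vm' - \textbf{s}) \leq \rho_k(\vm, \vm') + 6\delta \sum_{i=1}^k s_i \leq \Bigl(1 + 6 \sum_{i=1}^k s_i\Bigr)\delta.$$

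Setting $C := 1 + 6 \sum_{i=1}^k s_i$, I would finally choose $\delta \leq \min(\delta_0,\, 1/3,\, \eta/C)$. Then the mixed case is impossible, the both-below case is trivial, and in the both-above case the displayed estimate together with the uniform continuity of $\sigma$ gives $|\sigma(\vm - \textbf{s}) - \sigma(\vm' - \textbf{s})| < \epsilon$, as required. The hardest step is the estimate in the second paragraph: unlike the left shift of Lemma \ref{lem:lshiftop}, the right shift is not non-expansive on $(\N_0^k, \rho_k)$, so one must genuinely exploit the interplay between the discrete spacing of $\N_0$ and the behaviour of the square-root metric near the origin — namely, that $\rho_k$-proximity of distinct integers forces them to be large, the regime where the shift distorts distances negligibly.
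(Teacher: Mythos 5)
Your proof is correct and follows the same underlying strategy as the paper's: both arguments hinge on the observation that, for sufficiently small $\delta$, any coordinate where $\vm$ and $\vm'$ genuinely differ must be large, and in that regime the right shift is a nearly isometric perturbation of the $\sqrt{\cdot}$-coordinate. The paper fixes the threshold at $2s$ with $s=\max_i s_i$ and extracts a uniform multiplicative factor $\sqrt{2}$, yielding $\omega_{\rho_k,\tau_R^{\textbf{s}}\sigma}(\delta)\le\omega_{\rho_k,\sigma}(\sqrt{2}\,\delta)$, while you use the $\delta$-dependent lower bound $\sqrt{m_i}\ge 1/(3\delta)$ and an additive defect $6\delta\sum_i s_i$; you also treat the ``mixed'' case (one index above, one below $\textbf{s}$) explicitly, which the paper leaves implicit, but the essential idea is identical.
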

\begin{proof}
Again boundedness of $\tau_R^{\textbf{s}}\sigma$ follows from the boundedness of $\sigma$. 
Let $s=\max\{s_1,\dots,s_k\}$. Assume $\delta<\sqrt{2s}-\sqrt{2s-1}$ and let $\vm=(m_1,\dots,m_k), \vm'=(m'_1,\dots,m'_k)\in \N_o^k$ such that $\rho_k(\vm,\vm')<\delta$. Then $|\sqrt{m_i}-\sqrt{m'_i}|<\sqrt{2s}-\sqrt{2s-1}$ and since 
$$\min_{\substack{j,l\in \N_0 \\ i,j\leq 2s}}|\sqrt{j}-\sqrt{l}|= \sqrt{2s}-\sqrt{2s-1},$$ 
we have $m_i,m'_i>2s\geq 2s_i$ for all $i=1,\dots,k$. 
Notice that
\begin{align*}
    m_i,m'_i>2s_i=\frac{s_i}{1-\frac{1}{2}} &\implies \frac{1}{2}m_i< m_i-s_i,\ \frac{1}{2}m'_i< m'_i-s_i \\
    &\implies \frac{1}{\sqrt{2}}\sqrt{m_i}<\sqrt{m_i-s_i}, \ \frac{1}{\sqrt{2}}\sqrt{m'_i}<\sqrt{m'_i-s_i}\\
     &\implies \frac{1}{\sqrt{2}}(\sqrt{m_i}+\sqrt{m'_i})< \sqrt{m_i-s_i}+\sqrt{m'_i-s_i}\\
     &\implies \frac{|m_i-m'_i|}{\sqrt{m_i-s_i}+\sqrt{m'_i-s_i}}<\sqrt{2}\frac{|m_i-m'_i|}{\sqrt{m_i}+\sqrt{m'_i}}\\
     &\implies |\sqrt{m_i-s_i}-\sqrt{m'_i-s_i}|< \sqrt{2}|\sqrt{m_i}-\sqrt{m'_i}|.
\end{align*}
Hence
$\omega_{\rho_k,\tau_L^{\textbf{s}}\sigma}(\delta)\leq \omega_{\rho_k,\sigma}(\sqrt{2}\delta)$ for all $\delta<\sqrt{2s}-\sqrt{2s-1}$.
\end{proof}

The metric $\rho_k$ can be extended to $\R_+^k$ and we denote the set of all functions on $\R_+^k$ that are uniformly continuous with respect to $\rho_k$ by $C_{b,u}(\R_+^k,\rho_k)$. The following lemma shows that any $\sigma \in \RO$ can be extended to some $f\in C_{b,u}(\R_+^k,\rho_k)$. We will use this fact in section \ref{sec:density}.

\begin{lemma}\label{lem:extend}
Let $\sigma \in \RO$. Define $f$ on $\R_+^k$ by 
\begin{align*}
    f(x)&=f_\vm(x):=\sigma(\vm)+
    \sum_{l=1}^k\sum_{\substack{i_1,\dots,i_l\in\{1,\dots,k\} \\ i_1<\cdots<i_l}}a^l_{i_1,\dots,i_l}(\vm)\prod_{q=1}^l\frac{\sqrt{x_{i_q}}-\sqrt{m_{i_q}}}{\sqrt{m_{i_q}+1}-\sqrt{m_{i_q}}} 
\end{align*}
where $m_i=\lfloor x_i\rfloor$, $i=1,\dots ,k$ and the coefficients $a^l_{i_1,\dots,i_l}(\vm)$ are given by 
$$a^l_{i_1,\dots,i_l}(\vm)= \sum_{q=0}^{l} (-1)^{l-q} \sum_{\substack{j_1,\dots,j_{q}\in \{i_1,\dots,i_{l}\} \\ j_1<\cdots<j_{q}}} \sigma(\vm +e_{j_1}+\cdots + e_{j_q})$$
where $e_i$ denotes the standard basis of $\N_0^k$.

Then $f\in \RO$ and $f|_{\N_0^k}=\sigma$. Moreover $\|f\|_{\infty}=\|\sigma\|_\infty$.
\end{lemma}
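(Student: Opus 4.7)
The plan is to recognize the formula for $f$ as the multilinear interpolation of $\sigma$ in the variables $t_i = (\sqrt{x_i}-\sqrt{m_i})/(\sqrt{m_i+1}-\sqrt{m_i}) \in [0,1)$. Expanding $\prod_i t_i^{\epsilon_i}(1-t_i)^{1-\epsilon_i}$ by distributivity and collecting coefficients by subsets of $\{1,\dots,k\}$, one verifies
\[ f(x) \;=\; \sum_{\epsilon \in \{0,1\}^k} \sigma(\vm + \epsilon) \prod_{i=1}^k t_i^{\epsilon_i}(1-t_i)^{1-\epsilon_i}, \]
the coefficient of $\prod_q t_{i_q}$ reducing to exactly the alternating sum $a^l_{i_1,\dots,i_l}(\vm)$. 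Three consequences follow immediately. (i) At $x=\vn\in\N_0^k$ we take $\vm=\vn$, all $t_i=0$, and only the $\epsilon=0$ term survives, so $f|_{\N_0^k}=\sigma$. (ii) Since the product weights form a probability distribution on $\{0,1\}^k$, $|f(x)|\leq \max_\epsilon |\sigma(\vm+\epsilon)|\leq \|\sigma\|_\infty$, with equality because $f$ extends $\sigma$. (iii) Across any face $x_i = m_i+1$, the formula is continuous: $t_i=1$ in the lower cell and $t_i=0$ in the upper cell $\vm+e_i$ both collapse the expression to the same $(k-1)$-dimensional interpolation at $\vm + e_i$.

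The main step is $\rho_k$-uniform continuity. Substitute $y_i=\sqrt{x_i}$ and set $g(\vy)=f(y_1^2,\dots,y_k^2)$; since $\rho_k$ in the squared coordinates equals the $\ell^1$-distance in the $\vy$ coordinates, $\rho_k$-uniform continuity of $f$ on $\R_+^k$ is equivalent to $\ell^1$-uniform continuity of $g$ on $\R_+^k$. On each cell $\prod_i[\sqrt{m_i},\sqrt{m_i+1}]$, $g$ is affine in each $y_i$ with corner values $\sigma(\vm+\epsilon)$. Given $\eta>0$, fix $\delta_0>0$ such that $\omega_{\rho_k,\sigma}((2k+1)\delta_0)<\eta$, and pick $M\in\N$ with $\sqrt{m+1}-\sqrt{m}<\delta_0$ for every $m\geq M$. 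For $\vy,\vy'$ in the tail $[\sqrt{M},\infty)^k$ with $\|\vy-\vy'\|_1<\delta_0$, let $\vm, \vm'$ denote the respective cell indices, and write
\[ g(\vy) - g(\vy') \;=\; \sum_{\epsilon,\epsilon' \in \{0,1\}^k} w_\epsilon(\vy)\, w_{\epsilon'}(\vy')\, \bigl[\sigma(\vm+\epsilon) - \sigma(\vm'+\epsilon')\bigr], \]
using $\sum_\epsilon w_\epsilon(\vy)=1=\sum_{\epsilon'}w_{\epsilon'}(\vy')$. Since $|\sqrt{m_i+\epsilon_i}-y_i|\leq \sqrt{m_i+1}-\sqrt{m_i}<\delta_0$ for each $i$, the triangle inequality gives $\rho_k(\vm+\epsilon,\vm'+\epsilon')\leq 2k\delta_0+\|\vy-\vy'\|_1<(2k+1)\delta_0$, so every bracketed difference is at most $\omega_{\rho_k,\sigma}((2k+1)\delta_0)<\eta$, and convexity yields $|g(\vy)-g(\vy')|<\eta$.

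On the remaining region, where some coordinate is $\leq \sqrt{M}$, $g$ is determined by the finite set of corner values $\sigma(\vm+\epsilon)$ with each $m_i\leq M$ and is a piecewise multilinear function on the compact product $[0,\sqrt{M+1}]^k$, hence Lipschitz there with some constant $L_M$. Taking $\delta=\min(\delta_0,\eta/L_M)$ and using continuity of $g$ at the interface $y_i=\sqrt{M}$ glues the two estimates into $\ell^1$-uniform continuity of $g$ on all of $\R_+^k$, as required. The principal obstacle is the multilinear bookkeeping in the tail estimate; expressing $g(\vy)-g(\vy')$ as a double convex combination over $(\epsilon,\epsilon')$ keeps the argument dimension-independent and avoids an inductive peel-off of coordinates.
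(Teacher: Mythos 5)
Your multilinear-interpolation reformulation
\[ f(x) \;=\; \sum_{\epsilon \in \{0,1\}^k} \sigma(\vm + \epsilon)\, w_\epsilon(x),\qquad w_\epsilon(x)=\prod_{i=1}^k t_i^{\epsilon_i}(1-t_i)^{1-\epsilon_i},\quad t_i=\frac{\sqrt{x_i}-\sqrt{m_i}}{\sqrt{m_i+1}-\sqrt{m_i}}, \]
is correct (the inclusion--exclusion over subsets $U=\{i_1,\dots,i_l\}$ does give exactly $a^l_{i_1,\dots,i_l}(\vm)$), and it is genuinely cleaner than the paper's direct manipulation of the $a^l$ coefficients: the restriction $f|_{\N_0^k}=\sigma$, the norm identity $\|f\|_\infty=\|\sigma\|_\infty$, and continuity across cell boundaries all drop out of the probability-weight interpretation, where the paper spends two lemmas (Lemma 8.1 and Lemma 8.3) to get the same things. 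The tail estimate via the double convex combination $g(\vy)-g(\vy')=\sum_{\epsilon,\epsilon'} w_\epsilon(\vy)w_{\epsilon'}(\vy')[\sigma(\vm+\epsilon)-\sigma(\vm'+\epsilon')]$ is also correct and is a nice dimension-free replacement for the inductive peel-off.

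The gap is in the last step. You claim that on the complementary region---the set of $\vy$ with $y_i\le\sqrt M$ for \emph{some} $i$---$g$ ``is determined by the finite set of corner values $\sigma(\vm+\epsilon)$ with each $m_i\le M$'' and ``is a piecewise multilinear function on the compact product $[0,\sqrt{M+1}]^k$.'' Neither statement holds: this region is unbounded (e.g.\ $(1,10^6,\dots)\in\R_+^k$ has $y_1\le\sqrt M$ but lies far outside $[0,\sqrt{M+1}]^k$), the indices $m_j$ for the other coordinates range over all of $\N_0$, and so the corner values involved form an infinite set and there is no compactness to invoke. The Lipschitz constant $L_M$ therefore does not exist, and the gluing argument cannot be run. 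The real difficulty, which the paper's appendix handles with a per-coordinate dichotomy, is the mixed case where some cells are wide (small $m_i$) and others narrow (large $m_i$) simultaneously: for a wide coordinate one bounds the contribution by $|\sqrt{x_i}-\sqrt{x'_i}|/(\sqrt{m_i+1}-\sqrt{m_i})\le\delta/\sqrt\delta$ using a crude $\|\sigma\|_\infty$ bound on $a^l$, while for a narrow coordinate one bounds $a^l$ itself by a modulus of continuity via Lemma \ref{lem:extend_1}. To repair your proof in your notation, you would need an analogous per-coordinate split inside the convex-combination identity rather than a global tail/bounded dichotomy.
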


The function $f$ defined above is motivated by the interpolation formula for the case $k=1$, given by
$$f(x)=\sigma(m)+(\sigma(m+1)-\sigma(m))\frac{\sqrt{x}-\sqrt{m}}{\sqrt{m+1}-\sqrt{m}}$$
where $m=\lfloor x\rfloor$ and $x\in \R_+$. The proof of Lemma \ref{lem:extend} is a generalization of the proof of Lemma 3.3 in \cite{EM16}. However, the proof depends heavily on combinatorics and induction; we have provided the proof in the appendix.

%===========================================================

\section{Uniform continuity of eigenvalue functions with respect to the square-root metric}\label{sec:inclusion}

In this section, we show that the set of all eigenvalue functions is a subset of $\RO$.

\begin{definition}\label{def:spec}
Let $\specnn$ be the set of all eigenvalue functions: $$\specnn:=\{\eigenfnn \mid \varphi \in \Linfnn\}.$$
\end{definition}

\begin{proposition} \label{prop:inclusion}
The eigenvalue functions $\eigenfnn$ are bounded functions that are uniformly continuous with respect to the square-root metric $\rho_k$, i.e.,
$$\specnn \subseteq \RO.$$
\end{proposition}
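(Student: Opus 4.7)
I will prove the stronger statement that $\gamma_{\vn,a}$ is Lipschitz with constant $2\|a\|_\infty$ with respect to the metric $\rho_k$, which immediately yields both boundedness and uniform continuity. For boundedness, observe that the density
$$p_{\vm+\vn-\textbf{1}}(\vr) := \frac{\vr^{\vm+\vn-\textbf{1}} e^{-(r_1+\cdots+r_k)}}{(\vm+\vn-\textbf{1})!}$$
factors as a product of one-dimensional Gamma densities on $\R_+$, so $p_{\vm+\vn-\textbf{1}}\,d\vr$ is a probability measure on $\R_+^k$. Hence $|\gamma_{\vn,a}(\vm)| \leq \|a\|_\infty$. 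For uniform continuity, the key reduction is Corollary~\ref{coro:eigenshift}, which says $\gamma_{\vn,a} = \tau_L^{\vn-\textbf{1}} \gamma_{\textbf{1},a}$; combined with Lemma~\ref{lem:lshiftop}, it suffices to prove $\gamma_{\textbf{1},a}\in\RO$, i.e.\ to handle the separately-radial case.

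For $\vn=\textbf{1}$, I estimate the difference along a single coordinate direction. With $p_\vm(\vr) = \vr^\vm e^{-|\vr|}/\vm!$,
$$\gamma_{\textbf{1},a}(\vm) = \int_{\R_+^k} a(\sqrt{\vr}) p_\vm(\vr)\, d\vr,$$
and the pointwise ratio $p_{\vm+e_i}(\vr)/p_\vm(\vr) = r_i/(m_i+1)$ gives
$$\gamma_{\textbf{1},a}(\vm+e_i) - \gamma_{\textbf{1},a}(\vm) = \int_{\R_+^k} a(\sqrt{\vr})\left[\frac{r_i}{m_i+1}-1\right] p_\vm(\vr)\, d\vr.$$
Under the probability measure $p_\vm\, d\vr$, the coordinate $r_i$ has the $\mathrm{Gamma}(m_i+1,1)$ distribution with mean $m_i+1$ and variance $m_i+1$, so $r_i/(m_i+1)$ has mean $1$ and variance $1/(m_i+1)$. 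Cauchy--Schwarz together with the elementary identity $\sqrt{m+1}-\sqrt{m} = 1/(\sqrt{m+1}+\sqrt{m}) \geq 1/(2\sqrt{m+1})$ then yields
$$|\gamma_{\textbf{1},a}(\vm+e_i) - \gamma_{\textbf{1},a}(\vm)| \leq \frac{\|a\|_\infty}{\sqrt{m_i+1}} \leq 2\|a\|_\infty\,(\sqrt{m_i+1} - \sqrt{m_i}).$$

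Finally, for arbitrary $\vm,\vm' \in \N_0^k$, link them by a path that changes one coordinate at a time by unit increments, and telescope the single-coordinate estimate along this path. Each coordinate's telescoping sum of $\sqrt{m+1}-\sqrt{m}$ terms collapses to $|\sqrt{m_i}-\sqrt{m_i'}|$, giving
$$|\gamma_{\textbf{1},a}(\vm)-\gamma_{\textbf{1},a}(\vm')| \leq 2\|a\|_\infty \sum_{i=1}^k |\sqrt{m_i}-\sqrt{m_i'}| = 2\|a\|_\infty\, \rho_k(\vm,\vm'),$$
so $\omega_{\rho_k,\gamma_{\textbf{1},a}}(\delta) \leq 2\|a\|_\infty \delta \to 0$ as $\delta\to 0$. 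I expect the main technical point to be the variance identity for $r_i/(m_i+1)$, which reduces to the standard Gamma moment computation; after that, Cauchy--Schwarz and elementary square-root algebra close the estimate. The multi-dimensional step poses no genuine difficulty because the ratio $p_{\vm+e_i}/p_\vm$ depends only on $r_i$, so the marginals in the other coordinates integrate to $1$ and the bound is inherited from the one-dimensional computation.
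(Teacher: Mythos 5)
Your proof is correct, and it takes a genuinely different route through the key one-dimensional estimate. The paper's proof rests on citing Lemma~4.3 of Esmeral--Maximenko (your Lemma~\ref{lembound}), which gives the $L^1$ bound $\int_0^\infty\bigl|\tfrac{r^m}{m!}-\tfrac{r^{m-1}}{(m-1)!}\bigr|e^{-r}dr\le\sqrt{2/(\pi m)}$; that estimate is sharp but requires an explicit computation with the incomplete Gamma function and Stirling's formula. You bypass this entirely by observing that under the Gamma$(m_i+1,1)$ probability measure the ratio $r_i/(m_i+1)$ has mean $1$ and variance $1/(m_i+1)$, and applying Cauchy--Schwarz to get $\int_0^\infty|\tfrac{r}{m+1}-1|p_m(r)\,dr\le 1/\sqrt{m+1}\le 2(\sqrt{m+1}-\sqrt{m})$. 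This is elementary and self-contained, at the cost of a slightly worse Lipschitz constant ($2\|a\|_\infty$ versus the paper's $2\sqrt{2/\pi}\,\|a\|_\infty\approx 1.60\|a\|_\infty$), which is immaterial for the uniform continuity conclusion. The multi-dimensional reduction is also handled differently: the paper proves a standalone product inequality (Lemma~\ref{lem:intproduct}) and then estimates $\prod K(m_i,\cdot)-\prod K(m'_i,\cdot)$ wholesale, whereas you telescope one coordinate at a time through the lattice path from $\vm$ to $\vm'$; because the ratio $p_{\vm+e_i}/p_\vm$ depends only on $r_i$, each one-step bound is independent of the other coordinates and the coordinate sums collapse cleanly to $\rho_k(\vm,\vm')$. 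Both reductions to $\vn=\textbf{1}$ via Corollary~\ref{coro:eigenshift} coincide (the paper uses the monotonicity of $\rho_k$ under coordinate shifts directly, you quote Lemma~\ref{lem:lshiftop}, which encapsulates the same fact). Overall your argument buys simplicity and self-containment; the paper's buys a marginally tighter constant by importing a more delicate estimate.
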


%====================================================

The boundedness of $\eigenfnn$ follows from the boundedness of the symbol. We proceed to prove the uniform continuity of $\eigenfnn$ with respect to the metric $\rho_k$. The following lemma is Lemma 4.3 in \cite{EM16} which is used by Esmeral and Maximenko to prove the above theorem for $n=1$. Then we prove Lemma \ref{lem:Kbound} and Lemma \ref{lem:intproduct} which are used to prove Proposition \ref{prop:Lipchitz}. Theorem \ref{prop:inclusion} follows as a corollary of Proposition \ref{prop:Lipchitz}.

\begin{lemma}\label{lembound} For any $m\in\N$,
$$\int_{0}^{\infty}\Big|\frac{r^m}{m!}-\frac{r^{m-1}}{(m-1)!}\Big|e^{-r}dr \leq \sqrt{\frac{2}{\pi m}}.$$
\end{lemma}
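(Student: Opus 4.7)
The plan is to reduce this to an exact computation that telescopes and then invoke Stirling's formula. First, I would factor out $r^{m-1}/(m-1)!$ from inside the absolute value to rewrite the integrand as $\frac{r^{m-1}|r-m|}{m!}\,e^{-r}$, so that the estimate we want becomes
\[
I:=\frac{1}{m!}\int_0^\infty r^{m-1}|r-m|\,e^{-r}\,dr \;\le\; \sqrt{\tfrac{2}{\pi m}}.
\]

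The key observation is that the sign of $r-m$ flips exactly once, at $r=m$, so I would split the integral at that point. The crucial computation is that $\frac{d}{dr}\bigl[-r^{m}e^{-r}\bigr] = r^{m-1}(r-m)e^{-r}$, which means $-r^{m}e^{-r}/m!$ is an exact antiderivative of the (signed) integrand. I would then evaluate
\[
\int_m^\infty \frac{r^{m-1}(r-m)}{m!}\,e^{-r}\,dr = \left[\frac{-r^{m}e^{-r}}{m!}\right]_m^\infty = \frac{m^{m}e^{-m}}{m!},
\]
and the same antiderivative (with the opposite sign since $m-r\ge 0$ on $[0,m]$) gives the identical value $\frac{m^{m}e^{-m}}{m!}$ for the integral over $[0,m]$. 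Adding the two pieces yields the closed form $I=\dfrac{2\,m^{m}e^{-m}}{m!}$.

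Finally, I would invoke the standard Stirling lower bound $m!\ge \sqrt{2\pi m}\,m^{m}e^{-m}$, which gives $\dfrac{m^{m}e^{-m}}{m!}\le \dfrac{1}{\sqrt{2\pi m}}$ and hence $I\le \dfrac{2}{\sqrt{2\pi m}}=\sqrt{\dfrac{2}{\pi m}}$, as required. There is no real obstacle here: the only step that needs care is the antiderivative identification, and once that is in place the bound is immediate. The proof is essentially a repackaging of Lemma 4.3 of Esmeral--Maximenko, and no new ideas are needed for this one-variable estimate.
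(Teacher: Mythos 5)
Your proof is correct and complete. Every step checks out: factoring gives $\bigl|\tfrac{r^m}{m!}-\tfrac{r^{m-1}}{(m-1)!}\bigr| = \tfrac{r^{m-1}|r-m|}{m!}$, the identity $\tfrac{d}{dr}\bigl[-r^m e^{-r}\bigr] = r^{m-1}(r-m)e^{-r}$ is exactly the right antiderivative, both pieces evaluate to $\tfrac{m^m e^{-m}}{m!}$, and the Robbins-type lower bound $m! \ge \sqrt{2\pi m}\,m^m e^{-m}$ (valid for all $m\ge 1$, not just asymptotically) finishes it. Note that the paper itself gives no proof of this lemma; it is simply quoted as Lemma~4.3 of Esmeral--Maximenko, so your argument supplies a self-contained verification rather than competing with anything in the text, and it is indeed the standard computation behind that cited result.
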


As a consequence of the above lemma we prove the following:

\begin{lemma}\label{lem:Kbound} For any $m,m' \in\N_0$,
$$\int_{0}^{\infty}\Big|\frac{r^{m}}{m!}-\frac{r^{m'}}{m'!}\Big|e^{-r}dr \leq 2\sqrt{\frac{2}{\pi}}|\sqrt{m}-\sqrt{m'}|.$$
\end{lemma}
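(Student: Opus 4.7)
The plan is to reduce the bound to repeated application of Lemma \ref{lembound} by telescoping the difference $\frac{r^m}{m!}-\frac{r^{m'}}{m'!}$ into a sum of consecutive differences, and then to estimate the resulting sum $\sum 1/\sqrt{j}$ by a second telescoping against $\sqrt{j}-\sqrt{j-1}$. If $m=m'$ the inequality is trivial, so I may assume $m>m'\geq 0$ without loss of generality (the integrand is symmetric in $m,m'$ under the absolute value).

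Write
\[
\frac{r^{m}}{m!}-\frac{r^{m'}}{m'!} \;=\; \sum_{j=m'+1}^{m}\left(\frac{r^{j}}{j!}-\frac{r^{j-1}}{(j-1)!}\right),
\]
apply the triangle inequality inside the integral, and then apply Lemma \ref{lembound} to each term to obtain
\[
\int_{0}^{\infty}\Big|\frac{r^{m}}{m!}-\frac{r^{m'}}{m'!}\Big|e^{-r}\,dr \;\leq\; \sum_{j=m'+1}^{m}\sqrt{\frac{2}{\pi j}} \;=\; \sqrt{\frac{2}{\pi}}\sum_{j=m'+1}^{m}\frac{1}{\sqrt{j}}.
\]

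The remaining step is the elementary estimate $\sum_{j=m'+1}^{m}\frac{1}{\sqrt{j}}\leq 2(\sqrt{m}-\sqrt{m'})$. For each $j\geq 1$ one has
\[
\frac{1}{\sqrt{j}} \;=\; \frac{2}{2\sqrt{j}} \;\leq\; \frac{2}{\sqrt{j}+\sqrt{j-1}} \;=\; 2(\sqrt{j}-\sqrt{j-1}),
\]
and summing this telescopes to $2(\sqrt{m}-\sqrt{m'})$, yielding the stated bound with constant $2\sqrt{2/\pi}$. There is no real obstacle here: the lemma is essentially a direct consequence of Lemma \ref{lembound} combined with the standard square-root telescoping trick, and the only minor care needed is the case $m'=0$, where the summation starts at $j=1$ and the same argument applies verbatim.
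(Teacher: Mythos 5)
Your proof is correct and follows essentially the same route as the paper: telescoping into consecutive differences, applying Lemma \ref{lembound} termwise, and then bounding $\sum 1/\sqrt{j}$ via the estimate $1/\sqrt{j}\le 2(\sqrt j-\sqrt{j-1})$, which is exactly the algebraic step used in the paper (written there as rationalizing $\sqrt{i}-\sqrt{i-1}$ and then using $\sqrt{i}+\sqrt{i-1}\le 2\sqrt{i}$).
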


\begin{proof}
Let $\underline{m}=\min \{m,m'\}$ and $\overline{m}=\max\{m,m'\}$. Notice that
\begin{align*}
    \int_{0}^{\infty}\Big|\frac{r^{m}}{m!}-\frac{r^{m'}}{m'!}\Big|e^{-r}dr &\leq
    \sum_{i=\underline{m}+1}^{\overline{m}} \int_{0}^{\infty}\Big|\frac{r^i}{i!}-\frac{r^{i-1}}{(i-1)!}\Big|e^{-r}dr\\
    & \leq \sum_{i=\underline{m}+1}^{\overline{m}} \sqrt{\frac{2}{\pi i}} \quad \text{  (by Lemma \ref{lembound})}\\
    &= \sqrt{\frac{2}{\pi}} \sum_{i=\underline{m}+1}^{\overline{m}} \frac{(\sqrt{i}+\sqrt{i-1})(\sqrt{i}-\sqrt{i-1})}{\sqrt{i}}\\
    &\leq \sqrt{\frac{2}{\pi}} \sum_{i=\underline{m}+1}^{\overline{m}} \frac{2\sqrt{i}(\sqrt{i}-\sqrt{i-1})}{\sqrt{i}}= 2\sqrt{\frac{2}{\pi}}|\sqrt{m}-\sqrt{m'}|.\qedhere
\end{align*}
\end{proof}

\begin{lemma}\label{lem:intproduct}
Suppose $f_i,g_i:\N_0\times\R_+\rightarrow \C$ satisfy $\int_{\R_+} |f_i(m,r)| dr,\ \int_{\R_+} |g_i(m,r)| dr\leq 1$ for $i=1,\dots,k$. Then for all $m_i,m'_i\in\N_0$,
\begin{align*}
    \int_{\R_+^k}   \Big|\prod_{i=1}^k f_i(m_i,r_i) -\prod_{i=1}^k g_i(m'_i,r_i)\Big| dr_1 \cdots dr_k
    \leq \sum_{i=1}^k \int_{\R_+} |f_i(m_i,r_i)-g_i(m'_i,r_i)| dr_i.
\end{align*}
\end{lemma}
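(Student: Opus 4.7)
The plan is to use the standard telescoping identity for differences of products, then separate the multi-dimensional integral by Fubini and apply the $L^1$-bounds coordinatewise.

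First, I would write the difference of products as a telescoping sum. Setting $F_i = f_i(m_i, r_i)$ and $G_i = g_i(m'_i, r_i)$, I would use
\[
\prod_{i=1}^k F_i - \prod_{i=1}^k G_i = \sum_{j=1}^{k} \Bigl(\prod_{i=1}^{j-1} G_i\Bigr) (F_j - G_j) \Bigl(\prod_{i=j+1}^{k} F_i\Bigr),
\]
which is the familiar identity obtained by inserting and subtracting the intermediate mixed products $G_1 \cdots G_{j-1} F_j \cdots F_k$ for $j = 1, \ldots, k+1$.

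Next, I would take absolute values, apply the triangle inequality to the $k$-term sum, and integrate over $\R_+^k$. Since each factor in the telescoping identity depends on only a single variable $r_i$, Fubini's theorem turns the integral of each summand into a product of one-dimensional integrals:
\[
\int_{\R_+^k} \Bigl|\prod_{i<j} G_i\Bigr| \cdot |F_j - G_j| \cdot \Bigl|\prod_{i>j} F_i\Bigr| \, dr_1 \cdots dr_k = \Bigl(\prod_{i<j} \!\!\int_{\R_+}\! |g_i(m'_i, r_i)| \, dr_i\Bigr) \Bigl(\int_{\R_+}\!\! |F_j - G_j|\, dr_j\Bigr) \Bigl(\prod_{i>j}\!\! \int_{\R_+}\! |f_i(m_i, r_i)|\, dr_i\Bigr).
\]
By hypothesis, each of the $L^1$-factors other than the middle one is bounded by $1$, so this expression is at most $\int_{\R_+} |f_j(m_j, r_j) - g_j(m'_j, r_j)|\, dr_j$. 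Summing over $j = 1, \ldots, k$ yields the claimed bound.

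There is no real obstacle here; the proof is essentially bookkeeping. The only care required is in choosing the telescoping decomposition so that each summand factorizes in the right way for Fubini, and in checking that the $L^1$-normalization of the $f_i$ and $g_i$ makes all the non-distinguished factors harmless. I would present the telescoping identity explicitly and then combine the three steps above into a short chain of inequalities.
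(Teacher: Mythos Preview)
Your proof is correct and follows essentially the same approach as the paper: the paper proves the inequality by induction on $k$, splitting off one factor at each step, which when unrolled is exactly your telescoping identity. The only cosmetic difference is that you write the full telescoping sum explicitly and apply Fubini once, whereas the paper hides the same decomposition inside an inductive argument.
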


\begin{proof}
Notice that the statement is true for $k=1$. Assume the statement is true for $k$. Then
\begin{align*}
     \int_{\R_+^{k+1}}  \Big| & \prod_{i=1}^{k+1} f_i(m_i,r_i) -\prod_{i=1}^{k+1} g_i(m'_i,r_i)\Big| dr_1 \cdots dr_{k+1} \\
     & \leq \int_{\R_+}|f_{k+1}(m_{k+1},r_{k+1})-g_{k+1}(m_{k+1},r_{k+1})|dr_{k+1} \Bigg |\int_{\R_+^k}\prod_{i=1}^{k} f_i(m_i,r_i) dr_1\dots dr_k\Bigg |\\
     &\hspace{1cm} + \int_{\R_+} |g_{k+1}(m_{k+1},r_{k+1})|dr_{k+1}\int_{\R_+^k}\Big|\prod_{i=1}^{k} f_i(m_i,r_i) -\prod_{i=1}^{k} g_i(m'_i,r_i)\Big| dr_1 \cdots dr_{k}\\
     &\leq \sum_{i=1}^{k+1} \int_{\R_+} |f_i(m_i,r_i)-g_i(m'_i,r_i)| dr_i
\end{align*}
by the assumption and because
\begin{align*}
    \Bigg |\int_{\R_+^k}\prod_{i=1}^{k} f_i(m_i,r_i) dr_1\dots dr_k\Bigg | 
    &\leq \prod_{i=1}^k \int_{\R_+}|f_i(m_i,r_i)| dr_i\\
    & \leq 1.
\end{align*}
Hence the result holds by induction on $k$.
\end{proof}

\begin{proposition}\label{prop:Lipchitz}
The eigenvalue functions $\eigenfnn$ are Lipchitz with respect to the square-root metric $\rho_k$: There exists $A_\varphi>0$ such that $$|\eigenfnn(\vm)-\eigenfnn(\vm')|\leq A_\varphi\rho_k(\vm,\vm')$$
for all $\vm=(m_1,\dots,m_k),\vm'=(m'_1,\dots,m'_k)\in \N_0^k$.
\end{proposition}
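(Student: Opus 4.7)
The plan is to reduce the multidimensional Lipschitz estimate to the one-variable estimates already established in Lemmas \ref{lem:Kbound} and \ref{lem:intproduct}. First, I would rewrite the eigenvalue function from Theorem \ref{thm:specnn} in product form,
\[
\gamma_{\vn,a}(\vm) = \int_{\R_+^k} a(\sqrt{\vr}) \prod_{i=1}^k f_i(m_i, r_i)\, dr_1 \cdots dr_k,
\]
where $f_i(m_i, r) = \frac{r^{m_i+n_i-1}}{(m_i+n_i-1)!}e^{-r}$. Each $f_i(m_i, \cdot)$ is a probability density on $\R_+$ (by the standard Gamma integral), so $\int_{\R_+} |f_i(m_i, r)|\, dr = 1$, which is exactly the normalization hypothesis required by Lemma \ref{lem:intproduct}.

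Next, using that $\|a\|_\infty = \|\varphi\|_\infty$ (via Lemma \ref{lem:radialfn}), I would pull $a$ out in absolute value and then apply Lemma \ref{lem:intproduct} to obtain
\[
|\gamma_{\vn,a}(\vm) - \gamma_{\vn,a}(\vm')| \le \|\varphi\|_\infty \sum_{i=1}^k \int_{\R_+} |f_i(m_i,r) - f_i(m'_i,r)|\, dr.
\]
Lemma \ref{lem:Kbound}, applied with the shifted indices $m_i + n_i - 1$ and $m'_i + n_i - 1$, bounds each summand by $2\sqrt{2/\pi}\, |\sqrt{m_i+n_i-1} - \sqrt{m'_i+n_i-1}|$.

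Finally, I would invoke the concavity inequality $|\sqrt{m_i+n_i-1} - \sqrt{m'_i+n_i-1}| \le |\sqrt{m_i} - \sqrt{m'_i}|$ (same estimate used in the proof of Lemma \ref{lem:lshiftop}; it follows from the fact that $\sqrt{x+c}-\sqrt{y+c}$ is decreasing in $c \ge 0$ whenever $x \ge y$). Summing in $i$ yields
\[
|\gamma_{\vn,a}(\vm) - \gamma_{\vn,a}(\vm')| \le 2\sqrt{\tfrac{2}{\pi}}\, \|\varphi\|_\infty\, \rho_k(\vm,\vm'),
\]
so $A_\varphi = 2\sqrt{2/\pi}\,\|\varphi\|_\infty$ works.

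There is no real obstacle here, since the preparatory lemmas have been tailored exactly for this assembly. The only point requiring care is matching the shifted indices $m_i+n_i-1$ produced by Theorem \ref{thm:specnn} against the unshifted $\rho_k$-distance on $\N_0^k$, and this is resolved by the same concavity observation already exploited in Section \ref{sec:inclusion}'s companion lemma on left shifts.
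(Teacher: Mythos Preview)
Your proposal is correct and follows essentially the same route as the paper: both arguments write the eigenvalue function as an integral of $a$ against a product of one-dimensional Gamma densities, invoke Lemma~\ref{lem:intproduct} and Lemma~\ref{lem:Kbound}, and then pass from the shifted indices $m_i+n_i-1$ to the unshifted $\rho_k$-distance via the concavity of the square root. The only cosmetic difference is that the paper first proves the estimate for $\vn=\textbf{1}$ with the kernel $K(m,r)=\frac{r^m}{m!}e^{-r}$ and then invokes Corollary~\ref{coro:eigenshift} to transfer it to general $\vn$, whereas you build the shift directly into the kernels $f_i$; the final constant $A_\varphi=2\sqrt{2/\pi}\,\|\varphi\|_\infty$ is the same.
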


\begin{proof}
Define $K:\N_0\times\R_+\rightarrow[0,\infty)$ by $K(m,r)=\frac{r^{m}}{m!}e^{-r}$. Then $\int_0^\infty K(m,r) dr=1$ and

\begin{align*}
    \eigenfn(\vm)= \int_{\R_+^k} a_\varphi (\sqrt{r_1},\dots,\sqrt{r_k})\prod_{i=1}^k K(m_i,r_i)dr_1\cdots dr_k
\end{align*}
where $\textbf{1}=(1,\dots,1)\in\N^k$. Notice that

\begin{align*}
    |\eigenfn(\vm) & -\eigenfn(\vm')| \\
    &\leq  \|a_\varphi\|_\infty \int_{\R_+^k}   \Big|\prod_{i=1}^k K(m_i,r_i) -\prod_{i=1}^k K(m'_i,r_i)\Big|dr_1\cdots dr_k  \\
    & \leq \|a_\varphi\|_\infty \sum_{i=1}^k \int_0^\infty |K(m_i,r_i)-K(m'_i,r_i)| dr_i \hspace{.7cm} \text{(by Lemma \ref{lem:intproduct})}\\
    &\leq 2\sqrt{\frac{2}{\pi}} \|a_\varphi\|_\infty\rho_k(\vm,\vm')
    \hspace{.5cm} \text{(by Lemma \ref{lem:Kbound})}.
\end{align*}

Hence by Corollary \ref{coro:eigenshift},
\begin{align*}
    |\eigenfnn(\vm)-\eigenfnn(\vm')| 
    &= |\eigenfn(\vm+\vn-\textbf{1}) -\eigenfn(\vm'+\vn-\textbf{1})|\\
    &\leq 2\sqrt{\frac{2}{\pi}} \|a_\varphi\|_\infty\rho_k(\vm+\vn-\textbf{1},\vm'+\vn-\textbf{1})\\
    &\leq 2\sqrt{\frac{2}{\pi}} \|a_\varphi\|_\infty \rho_k(\vm,\vm').
\end{align*}

Therefore the function $\eigenfnn$ are uniformly continuous with respect to the metric $\rho_k$, proving Theorem \ref{prop:inclusion}. \qedhere

\end{proof}

\section{Density of \texorpdfstring{$\fG_{\textbf{1}}$}{eigen functions} in \texorpdfstring{$\RO$}{slowly oscillating sequences}}\label{sec:density}

In the previous section we showed that $\specnn \subseteq \RO$. To show that the $C^*$-algebra generated by $\specnn$ is $\RO$, it is enough to show that $\specnn$ is dense in $\RO$ which will be proved in all generality in section \ref{sec:densitynn}. In this section we focus on the case $\vn =\textbf{1}=(1,\dots,1)\in\N^k$. In other words we will show that $\fG_{\textbf{1}}$, the set of eigenvalue functions we get for the case $\vn =\textbf{1}$, is dense in $\RO$. Note that in the above case, the Toeplitz operators under consideration are the Toeplitz operators on $\cF(\C^k)$ with seperately radial symbols.

This discussion is a generalization of the proof of density for the case $\vn=(1)$, presented in \cite{EM16}.

\subsection{Approximation by convolutions}\label{subsec:5.1}

The main goal of this subsection is to prove Proposition \ref{prop:tailbound}. For this we present a series of propositions and lemmas.\\

By a change of variable $(r_1,\dots, r_k)\rightarrow (r_1^2,\dots,r_k^2)$, we have 
\begin{align*}
    \eigenfn(\textbf{m}) &=  \int_{\R_+^k} a (r_1,\dots,r_k)  \prod_{i=1}^k g(m_i,r_i) dr_1\dots dr_k
\end{align*}
where $g(m,r)=\frac{2r^{2m+1}e^{-r^2}}{m!}$.
Let $h(x)=\sqrt{\frac{2}{\pi}}e^{-2x^2}$.

We have the following lemma from \cite{EM16} (Lemma 6.5).

\begin{lemma}\label{lem1} Let the functions $g$ and $h$ be as above. Then
$$\lim_{m\rightarrow\infty}\int_0^{\infty}|g(m,r)-h(\sqrt{m}-r)|dr=0.$$
\end{lemma}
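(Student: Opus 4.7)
The plan is to recognize $g(m,\cdot)$ as a probability density on $[0,\infty)$ --- the substitution $u=r^2$ turns it into the $\mathrm{Gamma}(m+1,1)$ density, which integrates to $1$ --- and to observe that $h(\sqrt{m}-\cdot)$ restricted to $[0,\infty)$ is (up to a vanishing tail) also a probability density. Since both densities concentrate near $\sqrt{m}$ for large $m$, the natural strategy is to recenter via $r=\sqrt{m}+s$ and then appeal to Scheff\'e's lemma to upgrade pointwise convergence of probability densities with matching limiting total mass to $L^1$ convergence.

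Concretely, the change of variable rewrites the quantity as
$$\int_0^\infty |g(m,r)-h(\sqrt{m}-r)|\,dr=\int_{-\sqrt{m}}^\infty |G_m(s)-h(s)|\,ds,$$
where $G_m(s):=g(m,\sqrt{m}+s)$ and I have used that $h$ is even. Extending $G_m$ by zero outside $[-\sqrt{m},\infty)$ gives a sequence of probability densities on $\R$; the truncation $\int_{-\infty}^{-\sqrt{m}}h(s)\,ds$ vanishes as $m\to\infty$, so it suffices to prove $\|G_m-h\|_{L^1(\R)}\to 0$.

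The key analytic step is the pointwise limit $G_m(s)\to h(s)$ for each fixed $s\in\R$. Writing
$$G_m(s)=\frac{2(\sqrt{m}+s)^{2m+1}}{m!}\,e^{-(\sqrt{m}+s)^2}=\frac{2\,m^{m+1/2}}{m!}\exp\bigl((2m+1)\log(1+s/\sqrt{m})-(\sqrt{m}+s)^2\bigr),$$
Stirling gives $\tfrac{2\,m^{m+1/2}}{m!}\sim\tfrac{2e^m}{\sqrt{2\pi}}$, while the Taylor expansion $(2m+1)\log(1+s/\sqrt{m})=2\sqrt{m}\,s-s^2+O(m^{-1/2})$ combines with $-(\sqrt{m}+s)^2=-m-2\sqrt{m}\,s-s^2$ to produce an exponent of $-m-2s^2+o(1)$. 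The $e^m$ factors cancel, leaving $G_m(s)\to\sqrt{2/\pi}\,e^{-2s^2}=h(s)$. Scheff\'e's lemma, applied to the probability densities $G_m$ on $\R$ with $\int G_m=1=\int h$, then delivers $\|G_m-h\|_{L^1(\R)}\to 0$.

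The main obstacle is the Stirling/Taylor computation: one must expand $\log(1+s/\sqrt{m})$ to sufficient order so that every $O(1)$ contribution is accounted for, since these terms combine nontrivially to produce the Gaussian factor $e^{-2s^2}$ with exactly the right constant $\sqrt{2/\pi}$. Once this pointwise asymptotic is in hand, the probabilistic packaging via Scheff\'e bypasses any need for delicate tail bounds on either $g(m,\cdot)$ or the shifted Gaussian.
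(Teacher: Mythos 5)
The paper does not prove this statement itself; it cites it as Lemma 6.5 of Esmeral--Maximenko [EM16], whose argument is essentially the one you give (recenter at $r=\sqrt{m}+s$, obtain the pointwise Gaussian limit via Stirling, and conclude $L^1$ convergence by Scheff\'e). Your computation is correct: the exponent $(2m+1)\log(1+s/\sqrt m)-(\sqrt m+s)^2=-m-2s^2+O(m^{-1/2})$ cancels the $e^m$ from Stirling exactly to produce $\sqrt{2/\pi}\,e^{-2s^2}$, and the Scheff\'e step is legitimate since $G_m$ (extended by zero to $\R$) and $h$ are both probability densities with $\int G_m=\int h=1$.
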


Let $H(x_1,\dots,x_k)=\prod_{i=1}^kh(x_i)=(\frac{2}{\pi})^{\frac{k}{2}}e^{-2(x_1^2+\cdots +x_k^2)}$. 

\begin{lemma}\label{lem2}
Let $\epsilon>0$. Then there exists $N$ such that for all $m_i>N$, $i=1,\dots,k$
$$\int_{\R_+^k} \Big|\prod_{i=1}^kg(m_i,r_i)-H(\sqrt{m_1}-r_1,\dots, \sqrt{m_k}-r_k) \Big|dr_1\dots dr_k<\epsilon.$$
\end{lemma}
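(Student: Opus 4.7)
The plan is to reduce the multi-dimensional estimate to a sum of one-dimensional estimates via Lemma \ref{lem:intproduct}, then apply the one-dimensional convergence supplied by Lemma \ref{lem1}. Write $H(\sqrt{m_1}-r_1,\dots,\sqrt{m_k}-r_k)=\prod_{i=1}^k h(\sqrt{m_i}-r_i)$, so that the integrand of interest is exactly the absolute difference of two $k$-fold products.

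To invoke Lemma \ref{lem:intproduct} with $f_i(m,r)=g(m,r)$ and $g_i(m,r)=h(\sqrt{m}-r)$, I first check the normalization hypotheses. For $g$, the substitution $u=r^2$ gives
\[
\int_0^\infty g(m,r)\,dr=\frac{1}{m!}\int_0^\infty u^m e^{-u}\,du=1,
\]
and for the translated Gaussian,
\[
\int_0^\infty h(\sqrt{m}-r)\,dr=\int_{-\infty}^{\sqrt{m}} h(t)\,dt\le \int_{\R}h(t)\,dt=1,
\]
since $\int_{\R}e^{-2x^2}\,dx=\sqrt{\pi/2}$ cancels the normalizing constant $\sqrt{2/\pi}$ in $h$. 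Both quantities are nonnegative, so the $|\cdot|$ in the hypotheses is automatic.

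Applying Lemma \ref{lem:intproduct} then yields
\[
\int_{\R_+^k}\Big|\prod_{i=1}^k g(m_i,r_i)-\prod_{i=1}^k h(\sqrt{m_i}-r_i)\Big|\,dr_1\cdots dr_k
\le \sum_{i=1}^k\int_0^\infty |g(m_i,r_i)-h(\sqrt{m_i}-r_i)|\,dr_i.
\]
By Lemma \ref{lem1}, for each fixed $i$ the integral $\int_0^\infty |g(m,r)-h(\sqrt{m}-r)|\,dr$ tends to $0$ as $m\to\infty$. Choose $N$ large enough that $m>N$ implies this integral is $<\varepsilon/k$. Then whenever $m_i>N$ for every $i=1,\dots,k$, the right-hand side is strictly less than $\varepsilon$, which is the claimed bound.

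The argument is essentially mechanical once Lemma \ref{lem:intproduct} and Lemma \ref{lem1} are in hand, so there is no real obstacle; the only care needed is the routine check that both factors $g(m,\cdot)$ and $h(\sqrt{m}-\cdot)$ have $L^1(\R_+)$-norm at most $1$ uniformly in $m$, which is what licenses the product-difference inequality.
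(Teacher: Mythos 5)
Your proof is correct and follows exactly the paper's own argument: factor $H$ as $\prod_i h(\sqrt{m_i}-r_i)$, verify $\int_0^\infty g(m,r)\,dr=1$ and $\int_0^\infty h(\sqrt{m}-r)\,dr\le 1$, invoke Lemma \ref{lem:intproduct} to reduce to a sum of one-dimensional integrals, and conclude with Lemma \ref{lem1}. No discrepancy.
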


\begin{proof}
By Lemma \ref{lem1}, there exists $N$ such that for all $m>N$, $\int_0^{\infty}|g(m,r)-h(\sqrt{m}-r)|dr<\frac{\epsilon}{k}$. Also notice that $\int_0^{\infty}g(m,r)dr=1$ and $\int_0^{\infty}h(\sqrt{m}-r)dr\leq \int_{\R}h(\sqrt{m}-r)dr=\int_{\R}h(r)dr=1$ for any $m\in \N_0$.
Then if $m_i>N$, $i=1,\dots,k$,

\begin{align*}
    &\int_{\R_+^k} \Big|\prod_{i=1}^kg(m_i,r_i)-H(\sqrt{m_1}-r_1,\dots, \sqrt{m_k}-r_k)\Big|dr_1\dots dr_k\\
    &\hspace{4cm}=\int_{\R_+^k} \Big |\prod_{i=1}^kg(m_i,r_i)-\prod_{i=1}^kh(\sqrt{m_i}-r_i)\Big|dr_1\dots dr_k\\
    &\hspace{4cm}\leq \sum_{i=1}^k \int_0^\infty |g(m_i,r_i)-h(\sqrt{m_i}-r_i)| dr_i \hspace{.7cm} \text{(by Lemma \ref{lem:intproduct})}\\
    &\hspace{4cm}< k\frac{\epsilon}{k}=\epsilon. \qedhere
\end{align*}
\end{proof}

Let $f\in L^1(\R^k,dx)$ and $g\in L^\infty(\R^k,dx)$ where $dx$ denotes the Lebesgue measure on $\R^k$. Recall that the convolution of $f$ and $g$, denoted $f\ast g$, is given by
$$f\ast g(x)=g\ast f(x)=\int_{\R^k}f(x-y)g(y) dy,\ \ \ \ \ x \in \R^k.$$

The following proposition presents an approximation of the eigenvalue functions by convolutions.
\begin{proposition}\label{prop:eigen_convol} Let $a\in L^\infty(\R_+^k)$ and let $\epsilon>0$. Define $H\ast a$, by considering '$a$' as a function on $\R^k$ whose support is $\R_+^k$.
Then there exists $N$ such that for all $m_i>N$, $i=1,\dots, k$,
$$|\eigenfn(m_1,\dots,m_k)-(H\ast a)(\sqrt{m_1},\dots,\sqrt{m_k})|<\epsilon.$$
\end{proposition}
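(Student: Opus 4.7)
The plan is to reduce the proposition directly to Lemma \ref{lem2}. Observe that, since $a$ is supported on $\R_+^k$ when viewed as a function on $\R^k$, the convolution unfolds as
\[
(H\ast a)(\sqrt{m_1},\dots,\sqrt{m_k}) = \int_{\R_+^k} H(\sqrt{m_1}-r_1,\dots,\sqrt{m_k}-r_k)\, a(r_1,\dots,r_k)\, dr_1\cdots dr_k,
\]
so the difference $\eigenfn(\vm) - (H\ast a)(\sqrt{m_1},\dots,\sqrt{m_k})$ can be written as a single integral of $a(r)$ against the kernel $\prod_{i=1}^k g(m_i,r_i) - H(\sqrt{m_1}-r_1,\dots,\sqrt{m_k}-r_k)$ over $\R_+^k$.

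The next step is to pull $a$ outside via the $L^\infty$ norm: since $a\in L^\infty(\R_+^k)$,
\[
|\eigenfn(\vm) - (H\ast a)(\sqrt{m_1},\dots,\sqrt{m_k})| \le \|a\|_\infty \int_{\R_+^k}\Big|\prod_{i=1}^k g(m_i,r_i) - H(\sqrt{m_1}-r_1,\dots,\sqrt{m_k}-r_k)\Big|\,dr_1\cdots dr_k.
\]
Given $\epsilon>0$, apply Lemma \ref{lem2} with threshold $\epsilon/(\|a\|_\infty+1)$ (handling the case $\|a\|_\infty=0$ trivially) to obtain an $N$ such that for all $m_i>N$ the integral on the right is below $\epsilon/(\|a\|_\infty+1)$, hence the whole expression is below $\epsilon$.

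There is essentially no obstacle here beyond verifying the two bookkeeping points: first, that the relabeling $(r_1,\dots,r_k)\mapsto (r_1^2,\dots,r_k^2)$ really does convert the formula for $\eigenfn$ in Theorem \ref{thm:specnn} (with $\vn=\textbf{1}$) into the form $\int a(r_1,\dots,r_k)\prod g(m_i,r_i)\,dr$ stated at the start of the subsection, which is a direct Jacobian computation; and second, that extending $a$ by zero outside $\R_+^k$ is consistent with the definition of $H\ast a$ as written in the proposition. Both are routine, so the proof reduces to one line of the triangle inequality followed by Lemma \ref{lem2}.
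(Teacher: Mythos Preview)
Your proof is correct and follows essentially the same approach as the paper's: write the difference as a single integral over $\R_+^k$ using that $a$ is supported there, bound $a$ by $\|a\|_\infty$, and invoke Lemma~\ref{lem2}. Your version is in fact slightly more careful, since you explicitly choose the threshold $\epsilon/(\|a\|_\infty+1)$ and handle the trivial case $\|a\|_\infty=0$.
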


\begin{proof}
Since support of $a$ is $\R_+^k$,
\begin{align*}
    &|\eigenfn(m_1,\dots,m_k)-(H\ast a)(\sqrt{m_1},\dots,\sqrt{m_k})| \\
    &= \Big|\int_{\R_+^k} \Bigg (\prod_{i=1}^kg(m_i,r_i)-H(\sqrt{m_1}-r_1,\dots, \sqrt{m_k}-r_k)\Bigg ) a(y) dr_1\dots dr_k\Big|\\
    &\leq \|a\|_\infty\int_{\R_+^k}\Big|\prod_{i=1}^kg(m_i,r_i)-H(\sqrt{m_1}-r_1,\dots, \sqrt{m_k}-r_k)\Big|dr_1\dots dr_k.
\end{align*}
Hence the result follows from \ref{lem2}
\end{proof}

\begin{lemma}\label{lem:convol}
Let $b\in L^\infty(\R^k)$ and $a=\chi_{\R_+^k}b$. Let $\epsilon>0$. Then there exists $M$ such that for all $x_i>M$, $i=1,\dots,k$,
$$|H\ast a(x_1,\dots,x_k)-H\ast b(x_1,\dots,x_k)|<\epsilon.$$
\end{lemma}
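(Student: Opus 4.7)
The plan is to exploit the simple observation that $b - a$ is supported in $\R^k \setminus \R_+^k$, so the difference $H\ast b - H\ast a$ is just the integral of $H(x-y) b(y)$ over the complement of the positive orthant. Since $H$ is a Gaussian product that concentrates near the origin and $x$ is pushed deep into $\R_+^k$, this integral should vanish as $\min_i x_i \to \infty$.

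Concretely, I would first write
\[
H\ast b(x) - H\ast a(x) \;=\; \int_{\R^k\setminus\R_+^k} H(x-y)\,b(y)\,dy,
\]
and then bound the absolute value by $\|b\|_\infty \int_{\R^k\setminus\R_+^k} H(x-y)\,dy$. Next, I would use the union bound
\[
\R^k \setminus \R_+^k \;\subseteq\; \bigcup_{i=1}^k \{y\in\R^k : y_i\le 0\}
\]
to split the integral into $k$ pieces. For each $i$, applying Fubini and the fact that each factor $h$ of $H$ integrates to $1$ over $\R$ collapses the $i$-th piece to a one-dimensional tail integral:
\[
\int_{\{y_i\le 0\}} H(x-y)\,dy \;=\; \int_{-\infty}^{0} h(x_i - y_i)\,dy_i \;=\; \int_{x_i}^{\infty} h(u)\,du.
\]

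Finally, since $h(u) = \sqrt{2/\pi}\, e^{-2u^2}$ is Gaussian, the tail $\int_{M}^\infty h(u)\,du$ tends to $0$ as $M\to\infty$. So I would choose $M$ so large that $\int_{M}^{\infty} h(u)\,du < \epsilon/(k\|b\|_\infty)$ (the case $\|b\|_\infty = 0$ is trivial); then for every $x$ with $x_i > M$ for all $i$, all $k$ tail integrals are bounded by this quantity, giving the claimed bound $|H\ast a(x) - H\ast b(x)| < \epsilon$. There is no real obstacle here; the argument is entirely mechanical, and the only delicate point is making sure the Fubini manipulation is carried out on the correct half-space so that the Gaussian tail estimate is applied to $x_i$ and not to $-x_i$.
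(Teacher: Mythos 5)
Your proposal is correct and is essentially the same argument as the paper's: bound the difference by $\|b\|_\infty$ times the integral of $H(x-y)$ over $\R^k\setminus\R_+^k$, overestimate that region by the union of the half-spaces $\{y_i\le 0\}$, use the product structure of $H$ and $\int_\R h = 1$ to reduce each piece to the one-dimensional Gaussian tail $\int_{x_i}^\infty h(u)\,du$, and let $x_i\to\infty$. No substantive differences.
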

\begin{proof}
Notice that for $x=(x_1,\dots,x_k)\in \R_+^k$,
\begin{align*}
    |H\ast a(x)-H\ast b(x)|
    &\leq \int_{\R^k}|a(y)-b(y)|H(x-y) dy\\
    &\leq \|b\|_\infty \int_{\R^k\setminus\R_+^k}H(x-y) dy\\
    &\leq \|b\|_\infty \sum_{j=1}^k \int_{\R_-}h(x_j-y_j)dy_j \prod_{\substack{i=1\\ i\neq j}}^k \int_{\R}h(x_i-y_i)dy_i\\
    &= \|b\|_\infty \sum_{j=1}^k \int_{\R_-}h(x_j-y_j)dy_j \\
    &= \|b\|_\infty \sum_{j=1}^k \int_{x_j}^\infty h(y_j) dy_j \hspace{.5 cm} (\text{by a change of variable}).
\end{align*}
The lemma holds as $\int_{x_j}^\infty h(y) dy$ approaches zero as $x_j$ goes to $\infty$.
\end{proof}

\begin{proposition}\label{prop:convol} 
Let $g\in C_{b,u}(\R^k)$ and let $\epsilon>0$. Then there is $b\in L^\infty(\R^k)$ such that 
$$\|H\ast b - g\|_\infty<\epsilon.$$
\end{proposition}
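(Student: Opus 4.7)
My plan is to prove the proposition by first approximating $g$ uniformly by a \emph{band-limited} function (one whose Fourier transform has compact support), and then solving the convolution equation $H \ast b = (\text{that band-limited function})$ exactly for a bounded $b$. The key Fourier-theoretic fact that enables the second step is that $\hat H$ is itself a Gaussian, hence nowhere zero, so that the pointwise reciprocal $1/\hat H$ is a well-defined smooth function.

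Concretely, I would fix once and for all a Schwartz function $\phi$ with $\int \phi = 1$ whose Fourier transform $\hat \phi$ is smooth and compactly supported (for example, by prescribing $\hat \phi$ to be a smooth bump equal to $1$ near the origin and letting $\phi$ be its inverse Fourier transform). Writing $\phi_t(x) = t^{-k}\phi(x/t)$, the family $(\phi_t)_{t>0}$ is a standard approximate identity, so by uniform continuity of $g$ we have $\phi_t \ast g \to g$ uniformly on $\R^k$; choose and fix $t>0$ such that $\|\phi_t \ast g - g\|_\infty < \epsilon$.

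Let $K$ be the function whose Fourier transform is $\hat{\phi_t}/\hat H$. Since $\hat{\phi_t}$ is smooth with compact support and $1/\hat H$ is smooth on $\R^k$, this quotient is smooth with compact support; hence $K$ is a Schwartz function (in particular $K \in L^1(\R^k)$) satisfying $H \ast K = \phi_t$. Now define $b := K \ast g$; then $b \in L^\infty(\R^k)$ with $\|b\|_\infty \le \|K\|_1 \|g\|_\infty$, and by associativity of convolution (valid since $H, K \in L^1$ and $g \in L^\infty$) we obtain $H \ast b = (H \ast K) \ast g = \phi_t \ast g$, which yields $\|H \ast b - g\|_\infty < \epsilon$. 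The main technical point is to verify cleanly that the Fourier-side construction produces a genuine Schwartz function $K$, so that the classical $L^1$--$L^\infty$ associativity applies; this reduces to the standard fact that the inverse Fourier transform of a smooth compactly supported function is Schwartz. Alternatively, the whole construction of $b$ can be packaged without explicit mention of Fourier analysis, by taking $\phi$ to be the convolution of $H$ with an appropriate Schwartz ``deconvolution kernel'' and performing the same computation directly in physical space.
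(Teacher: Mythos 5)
Your proposal is correct and follows essentially the same route as the paper: mollify $g$ by an approximate identity whose Fourier transform is compactly supported, observe that the Gaussian $\hat H$ is nowhere zero so the quotient $\hat\phi_t/\hat H$ is a smooth compactly supported (hence Schwartz) function $\hat K$, and set $b = K\ast g$ so that $H\ast b = \phi_t\ast g$. If anything, your version is slightly cleaner in insisting $\int\phi = 1$ (equivalently $\hat\phi(0)=1$) up front, which is precisely what the approximate-identity argument needs; the paper instead normalizes by the $L^1$-norm of $\mathscr{F}^{-1}(\tilde h)$, which a careful reader would want to double-check.
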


\begin{proof}
Let $\Tilde{h}$ denote the bump function (any compactly supported smooth function would suffice)
$$\Tilde{h}(\zeta)=e^{-\frac{1}{1-|\zeta|^2}} \text{\raisebox{2pt}{$\chi$}}_{B}(\zeta),\ \  \zeta\in\R^k$$
where $B$ is the open unit ball in $\R^k$.
Let $h$ denote the normalized Fourier inverse of $\Tilde{h}$, i.e., $$h=\frac{\mathscr{F}^{-1}(\Tilde{h})}{C}$$
where $C=\|\mathscr{F}^{-1}(\Tilde{h})\|_1$.
Define the approximate identity $h_t$ on $\R^k$ by 
$$h_t(x)=\cfrac{h(x/t)}{t}, \ \ x\in\R^k.$$
Notice that $\lim_{t\to 0}\|h_t\ast g-g\|_\infty=0$. Choose $h_{t_0}$ s.t. $\|h_{t_0}\ast g-g\|_\infty<\epsilon$.
Let $\hat{l}:=\frac{\mathscr{F}(h_{t_0})}{\mathscr{F}(H)}$. Then 
$$\hat{l}(\zeta)=\frac{\Tilde{h}(t_0\zeta)e^{|\zeta^2|/2}}{C}, \ \ \zeta\in\R^k.$$
Since $\hat{l}$ is a Schwartz function and $\mathscr{F}(h_{t_0})=\mathscr{F}(H)\hat{l}$, $h_{t_0}=H\ast l$ where $l=\mathscr{F}^{-1}(\hat{l})$.
Let $b=l\ast g$. Then $b\in L^\infty(\R^k)$ and $\|H\ast b-g\|_\infty=\|h_{t_0}\ast g-g\|_\infty<\epsilon$.
\end{proof}

Proposition \ref{prop:convol} can also be proved using Wiener's division lemma, as in  the proof of Proposition 5.4 in \cite{EM16}.

\begin{proposition}\label{prop:tailbound}
Let $\sigma\in \RO$ and let $\epsilon>0$. Then there exists $a\in L^\infty(\R_+^k)$ and $N$ such that for all $\vm=(m_1,\dots,m_k)\in \N_0^k$ with $m_i>N$, 
$$|\sigma(\vm)-\eigenfn(\vm)|<\epsilon.$$
\end{proposition}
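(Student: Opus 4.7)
The plan is to approximate $\sigma$ on the tail by an eigenvalue function $\eigenfn$ for a suitably chosen symbol $a$, by lifting $\sigma$ to a uniformly continuous function on all of $\R^k$ and then invoking the convolution apparatus already established in Proposition \ref{prop:convol}, Lemma \ref{lem:convol}, and Proposition \ref{prop:eigen_convol}.

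First I would extend $\sigma$ to $\R_+^k$: Lemma \ref{lem:extend} produces $f$ on $\R_+^k$ with $f|_{\N_0^k} = \sigma$, $\|f\|_\infty = \|\sigma\|_\infty$, and $f$ uniformly continuous with respect to $\rho_k$. Substituting $y_i = \sqrt{x_i}$, set $g(y_1,\dots,y_k) := f(y_1^2,\dots,y_k^2)$; the identity $\rho_k((y_1^2,\dots,y_k^2),((y'_1)^2,\dots,(y'_k)^2)) = \sum_{i=1}^k |y_i - y'_i|$ shows that $g$ is bounded and Euclidean-uniformly continuous on $\R_+^k$. Extend $g$ by evenness via $\tilde g(y) := g(|y_1|,\dots,|y_k|)$; since each $y_i \mapsto |y_i|$ is $1$-Lipschitz, $\tilde g$ lies in $C_{b,u}(\R^k)$.

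Second, I would apply Proposition \ref{prop:convol} to $\tilde g$ to obtain $b \in L^\infty(\R^k)$ with $\|H \ast b - \tilde g\|_\infty < \epsilon/3$, and set $a := \chi_{\R_+^k} b \in L^\infty(\R_+^k)$. Lemma \ref{lem:convol} supplies $M > 0$ such that $|H \ast a(x) - H \ast b(x)| < \epsilon/3$ whenever every $x_i > M$, and Proposition \ref{prop:eigen_convol} applied to this $a$ supplies $N_1$ such that $|\eigenfb{a}(\vm) - H \ast a(\sqrt{m_1},\dots,\sqrt{m_k})| < \epsilon/3$ whenever every $m_i > N_1$. Setting $N := \max(N_1, M^2)$ forces $\sqrt{m_i} > M$, and the equality $\sigma(\vm) = f(\vm) = g(\sqrt{m_1},\dots,\sqrt{m_k}) = \tilde g(\sqrt{m_1},\dots,\sqrt{m_k})$ combined with a three-term triangle inequality then bounds $|\sigma(\vm) - \eigenfb{a}(\vm)|$ by $\epsilon$ for every $\vm$ with $m_i > N$.

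The main technical point is the reduction carried out in the first paragraph: one must check that the square-root substitution converts $\rho_k$-uniform continuity on $\R_+^k$ into Euclidean uniform continuity, and that extending across the coordinate hyperplanes by evenness preserves uniform continuity, so that Proposition \ref{prop:convol} is applicable. Once those identifications are in hand, the remainder of the proof is a routine combination of the three preceding approximations.
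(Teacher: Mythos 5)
Your proposal is correct and follows essentially the same chain as the paper's proof: extend $\sigma$ via Lemma~\ref{lem:extend}, pass to $\R^k$ by substituting $x_i\mapsto\sqrt{x_i}$ (the paper writes $\Tilde g(x)=g(x_1^2,\dots,x_k^2)$, which is automatically even in each coordinate and hence the same even extension you construct explicitly), invoke Proposition~\ref{prop:convol}, restrict to get $a$, and combine Proposition~\ref{prop:eigen_convol} with Lemma~\ref{lem:convol} and a three-term triangle inequality. The only difference is expository: you spell out the verification that the square-root substitution turns $\rho_k$-uniform continuity into Euclidean uniform continuity and that the even reflection preserves it, and you make explicit the choice $N=\max(N_1,M^2)$ to convert the $x_i>M$ condition of Lemma~\ref{lem:convol} into a condition on $m_i$, both of which the paper leaves implicit.
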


\begin{proof}
This proof is similar to the proof of Proposition 6.8 in \cite{EM16}.

By Lemma \ref{lem:extend}, there exists $g\in C_{b,u}(\R_+^k)$ such that $g|_{\N_0^k}=\sigma$ and $\|g\|_\infty=\|\sigma\|_\infty$. Define $\Tilde{g}$ on $\R^k$ by $\Tilde{g}(x_1,\dots,x_k)=g(x_1^2,\dots,x_k^2)$. Then $\Tilde{g}\in C_{b,u}(\R^k)$. Then by Proposition \ref{prop:convol} there exists $b\in L^\infty(\R^k)$ such that 
$$\|H\ast b - \Tilde{g}\|_\infty<\frac{\epsilon}{3}.$$

Define $a$ on $\R_+^k$ by $a=b|_{\R_+^k}$. Then $a\in L^\infty(\R_+^k)$. Then by Proposition \ref{prop:eigen_convol} and Lemma \ref{lem:convol}, there exists $N_1,N_2$ such that for all $m_i>N_1$, $i=1,\dots,k$
$$|\eigenfn(m_1,\dots,m_k)-(H\ast a(\sqrt{m_1},\dots,\sqrt{m_k})|<\frac{\epsilon}{3}$$
and for all $m_i>N_2$,
$$|H\ast a(\sqrt{m_1},\dots,\sqrt{m_k})-H\ast b(\sqrt{m_1},\dots,\sqrt{m_k})|<\frac{\epsilon}{3}.$$
Then for all $\vm=(m_1,\dots,m_k)$ with $m_i>N=max\{N_1,N_2\}$
\begin{align*}
    &|\eigenfn(\vm)-\sigma(\vm)|\leq 
    |\eigenfn(\vm)-(H\ast a)(\sqrt{m_1},\dots,\sqrt{m_k})|\\
    &\hspace{3cm}+ |H\ast a(\sqrt{m_1},\dots,\sqrt{m_k})-H\ast b(\sqrt{m_1},\dots,\sqrt{m_k})|\\
    &\hspace{8cm}+\|H\ast b - \Tilde{g}\|_\infty\\
    &\hspace{3cm}<\frac{\epsilon}{3}+\frac{\epsilon}{3}+\frac{\epsilon}{3}=\epsilon. \qedhere
\end{align*}
\end{proof}

\subsection{The density of \texorpdfstring{$\fG_{\bf{1}}$}{G1} in \texorpdfstring{$\RO$}{Cbu}}
The proof of density (Theorem \ref{thm:density}) requires induction on $k$. Hence to indicate dependency on $k$, we identify $\fG_{\bf{1}}$ by $\fG_{\bf{1}}^k$ and $\eigenfn$ by $\eigenfb{a}^k$ as needed in this subsection.

\begin{lemma}\label{lem:ray}
Let $k>1$ and assume $\mathfrak{G}_{\bf{1}}^{k-1}$ is dense in $C_{b,u}(\N_0^{k-1},\rho_{k-1})$. For $i=1,\dots, k$ and $m_o\in \N_0$, define the set $K_i(m_0)\subset \N_0^k$ 
$$K_i(m_0)=\{\vm=(m_1,\dots,m_k) \in \N_0^k \ |\ m_i=m_0\}.$$
Let $\sigma\in \RO$ and let $\epsilon>0$. Then there exists $a\in L^\infty(\R_+^k)$ s.t. $$\|\sigma\chi_{K_i(m_0)}-\eigenfb{a}^k\|_\infty< \epsilon.$$ 
\end{lemma}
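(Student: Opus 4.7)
The plan is to exploit the tensor factorization
\[(\sigma\chi_{K_i(m_0)})(\vm) = \tilde\sigma(\vm')\,\chi_{\{m_0\}}(m_i),\]
where $\vm' = (m_1,\dots,\widehat{m_i},\dots,m_k) \in \N_0^{k-1}$ collects all coordinates except the $i$-th and $\tilde\sigma(\vm') := \sigma(m_1,\dots,m_{i-1},m_0,m_{i+1},\dots,m_k)$. Since Corollary \ref{coro:eigenproduct} says that the eigenvalue function of a product symbol factors as a product of one-variable eigenvalue functions, it is natural to approximate $\tilde\sigma$ and $\chi_{\{m_0\}}$ separately and then take the tensor product.

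After relabeling I may assume $i=k$. The restriction $\tilde\sigma$ lies in $C_{b,u}(\N_0^{k-1},\rho_{k-1})$, since the $\rho_k$-distance between $(\vm',m_0)$ and $(\vm'',m_0)$ equals $\rho_{k-1}(\vm',\vm'')$, so uniform continuity of $\sigma$ transfers to uniform continuity of $\tilde\sigma$. The inductive hypothesis therefore yields $\tilde a \in L^\infty(\R_+^{k-1})$ with $\|\eigenfb{\tilde a}^{k-1}-\tilde\sigma\|_\infty<\epsilon_1$, where $\epsilon_1$ will be chosen small at the end.

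The key observation, which is what makes the whole strategy work, is that the seemingly discontinuous indicator $\chi_{\{m_0\}}$ itself belongs to $\ro$. Indeed, $\sqrt{m_0}$ is separated from every other $\sqrt{m}$ by at least $\delta_0 := \sqrt{m_0+1}-\sqrt{m_0}>0$, so for $\delta<\delta_0$ the only integer $m$ with $\rho_1(m,m_0)<\delta$ is $m_0$ itself, whence $\omega_{\rho_1,\chi_{\{m_0\}}}(\delta)=0$. Applying Esmeral--Maximenko's density theorem (the $k=1$ base case \cite{EM16}) to $\chi_{\{m_0\}}$ produces $b\in L^\infty(\R_+)$ with $\|\eigenfb{b}^1-\chi_{\{m_0\}}\|_\infty<\epsilon_2$.

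Now set $a(r_1,\dots,r_k) = \tilde a(r_1,\dots,r_{k-1})\,b(r_k) \in L^\infty(\R_+^k)$. By Corollary \ref{coro:eigenproduct}, $\eigenfb{a}^{k}(\vm) = \eigenfb{\tilde a}^{k-1}(\vm')\,\eigenfb{b}^{1}(m_k)$, and a routine triangle-inequality estimate on a difference of two products yields
\[\|\eigenfb{a}^k - \sigma\chi_{K_k(m_0)}\|_{\infty} \leq \epsilon_1(1+\epsilon_2) + \|\sigma\|_\infty\,\epsilon_2,\]
valid uniformly over $\vm\in\N_0^k$, because when $m_k\neq m_0$ both $\sigma\chi_{K_k(m_0)}(\vm)$ and $\chi_{\{m_0\}}(m_k)$ vanish, so the same product bound applies on and off $K_k(m_0)$. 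Choosing first $\epsilon_2 < \epsilon/(2(1+\|\sigma\|_\infty))$ and then $\epsilon_1 < \epsilon/(2(1+\epsilon_2))$ pushes the right-hand side below $\epsilon$. The main conceptual hurdle is therefore the one addressed in the previous paragraph: despite appearances, $\chi_{\{m_0\}}$ is uniformly continuous with respect to the square-root metric, precisely because that metric isolates each fixed integer by a positive gap, and this is what allows a Toeplitz eigenvalue function to ``localize'' at the hyperplane $m_i=m_0$.
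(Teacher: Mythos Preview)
Your proof is correct and follows essentially the same route as the paper: factor $\sigma\chi_{K_i(m_0)}$ as a tensor $\tilde\sigma\otimes\chi_{\{m_0\}}$, approximate each factor separately using the inductive hypothesis for $k-1$ and the base case $k=1$, form the product symbol, and combine via the triangle inequality. One small remark: the factorization $\eigenfb{a}^{k}(\vm)=\eigenfb{\tilde a}^{k-1}(\vm')\,\eigenfb{b}^{1}(m_k)$ is not literally an instance of Corollary~\ref{coro:eigenproduct} (which is stated for a full product of one-variable symbols), but it follows immediately from Fubini and the integral formula in Theorem~\ref{thm:specnn}, which is how the paper handles it as well.
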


\begin{proof}
Identify $K_i(m_0)$ with $\N_0^{k-1}$ by the map $$p_{i,m_0}:(m_1,\dots, m_{i-1},m_o,m_{i+1}\dots,m_k)\mapsto (m_1,\dots, m_{i-1},m_{i+1},\dots,m_k).$$
Define $\sigma_{m_0}: \N_0^{k-1}\to \C$ by
$$\sigma_{m_0}(m_1,\dots,m_{k-1})=\sigma\chi_{K_i(m_0)}(p_{i,m_0}^{-1}(m_1,\dots,m_{k-1})).$$
Then $\sigma_{m_0}\in C_{b,u}(\N_0^{k-1},\rho_{k-1})$.
By the assumption there is $b\in L^\infty (\R_+^{k-1})$ s.t.
$$\|\sigma_{m_0}-\eigenfb{b}^{k-1}\|<\frac{\epsilon}{2}.$$
Since $\chi_{\{m_0\}}\in C_{b,u}(\N_0,\rho_{1})$, there is $c\in L^\infty (\R_+)$ s.t.
$$\|\chi_{\{m_0\}}-\eigenfb{c}\|<\|b\|_\infty \frac{\epsilon}{2}.$$
Now define $a\in L^\infty(\R_+^k)$ by $$a(\vx)=b(x_1,\dots,x_{i-1},x_{i+1},\dots,x_k)c(x_i) \quad \forall \ \vx \in \R_+^k.$$
Then $\eigenfb{a}^k(\vm)=\eigenfb{b}^{k-1}(p_i(\vm))\eigenfb{c}(m_i)$ for all $\vm \in \N_0^k$.
Let $\nu\in C_{b,u}(\N_0^{k},\rho_{k})$ be defined by $\nu(\vm)=\eigenfb{b}^{k-1}(p_i(\vm))\chi_{\{m_0\}}(m_i)$ for all $\vm \in \N_0^k$.
Then 
\begin{align*}
    \|\sigma\chi_{K_i(m_0)}-\eigenfb{a}^k\|_\infty &\leq \|\sigma\chi_{K_i(m_0)}-\nu\|_\infty+\|\nu-\eigenfb{a}^k\|_\infty \\
    &\leq \|\sigma_{m_0}-\eigenfb{b}^{k-1}\|_\infty + \|\eigenfb{b}^{k-1}\|_\infty\|\eigenfb{c}-\chi_{\{m_0\}}\|_\infty\\
    &< \frac{\epsilon}{2}+ \|b\|_\infty\frac{\epsilon}{2\|b\|_\infty}=\epsilon.
    \qedhere
\end{align*}
\end{proof}

\begin{lemma}\label{lem:boundK}
Let $k>1$ and assume $\mathfrak{G}_{\bf{1}}^{k-1}$ is dense in $C_{b,u}(\N_0^{k-1},\rho_{k-1})$. Let $K_N\subset \N_0^k$ be defined by 
$$K_N:=\{\vm=(m_1,\dots,m_k)\in \N_0^k| m_i\leq N \text{ for some } i \}$$
Let $\sigma\in \RO$ and let $\epsilon>0$. Then there exists $a\in L^\infty(\R_+^k)$ s.t. $$\|\sigma\chi_{K_N}-\eigenfb{a}\|_\infty< \epsilon.$$
\end{lemma}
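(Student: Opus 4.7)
The plan is to decompose $\sigma\chi_{K_N}$ via inclusion--exclusion into a finite signed combination of pieces $\sigma\chi_{B_{S,\vm_0}}$, each supported on an intersection of slabs $K_i(m_{0,i})$, then approximate each piece by iterating Lemma \ref{lem:ray}, and finally add everything using linearity of the map $a\mapsto\eigenfb{a}^k$ to obtain a single eigenvalue function.

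Writing $A_i=\{\vm\in\N_0^k:m_i\leq N\}$ so that $K_N=\bigcup_{i=1}^k A_i$ and $\chi_{A_i}=\sum_{m_0=0}^{N}\chi_{K_i(m_0)}$, inclusion--exclusion on the $A_i$ expresses $\chi_{K_N}$ as a finite $\pm$ sum of at most $M\leq(N+2)^k$ characteristic functions $\chi_{B_{S,\vm_0}}$, where $B_{S,\vm_0}:=\bigcap_{i\in S}K_i(m_{0,i})$ and $S$ ranges over nonempty subsets of $\{1,\dots,k\}$ while $(m_{0,i})_{i\in S}\in\{0,\dots,N\}^S$. It therefore suffices to approximate each $\sigma\chi_{B_{S,\vm_0}}$ within tolerance $\epsilon/M$ by an eigenvalue function.

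For a fixed $S=\{i_1,\dots,i_s\}$ I iterate Lemma \ref{lem:ray}. Given $\delta>0$, Lemma \ref{lem:ray} applied to $\sigma\in\RO$ and the slab $K_{i_1}(m_{0,i_1})$ produces $\eigenfb{a_1}^k$ with $\|\sigma\chi_{K_{i_1}(m_{0,i_1})}-\eigenfb{a_1}^k\|_\infty<\delta$. Since $\eigenfb{a_1}^k\in\fG_{\bf{1}}^k\subseteq\RO$ by Proposition \ref{prop:inclusion}, Lemma \ref{lem:ray} applies again: it yields $\eigenfb{a_2}^k$ approximating $\eigenfb{a_1}^k\chi_{K_{i_2}(m_{0,i_2})}$ within $\delta$. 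Because multiplication by the $\{0,1\}$-valued function $\chi_{K_{i_2}(m_{0,i_2})}$ is a sup-norm contraction, the triangle inequality gives
$$\bigl\|\sigma\chi_{K_{i_1}(m_{0,i_1})\cap K_{i_2}(m_{0,i_2})}-\eigenfb{a_2}^k\bigr\|_\infty<2\delta.$$
Continuing for $s$ steps, I obtain an eigenvalue function $\eigenfb{a_{S,\vm_0}}^k$ with $\|\sigma\chi_{B_{S,\vm_0}}-\eigenfb{a_{S,\vm_0}}^k\|_\infty<s\delta\leq k\delta$.

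Summing the approximations according to the inclusion--exclusion formula and invoking the linearity $\eigenfb{c_1 a_1+c_2 a_2}^k=c_1\eigenfb{a_1}^k+c_2\eigenfb{a_2}^k$ produces a single eigenvalue function $\eigenfb{a}^k$ with $\|\sigma\chi_{K_N}-\eigenfb{a}^k\|_\infty<Mk\delta$; choosing $\delta=\epsilon/(Mk)$ completes the proof. The only delicate point is the iterative reapplication of Lemma \ref{lem:ray}: it is valid precisely because Lemma \ref{lem:ray} accepts any $\RO$-function as input, and eigenvalue functions themselves belong to $\RO$ by Proposition \ref{prop:inclusion}, so the iteration may be continued with errors accumulating only additively. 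No new analytic input beyond Lemma \ref{lem:ray} is needed; the remaining work is organizational bookkeeping on the finite sum from inclusion--exclusion.
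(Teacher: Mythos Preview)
Your argument is correct, but it follows a different route from the paper's. The paper does not use inclusion--exclusion or iterate Lemma~\ref{lem:ray}. Instead it writes $\sigma\chi_{K_N}$ directly as a finite sum $\sum_{i=1}^{k}\sum_{j=0}^{N}\sigma_{(i,j)}$ of functions, each supported in a single slab $K_i(j)$ and each lying in $\RO$; Lemma~\ref{lem:ray} is then applied once per summand (with input $\sigma_{(i,j)}$, so that $\sigma_{(i,j)}\chi_{K_i(j)}=\sigma_{(i,j)}$), and the resulting eigenvalue functions are summed linearly. This uses only $k(N+1)$ pieces and a single call to Lemma~\ref{lem:ray} per piece, but it requires checking that the disjointified restrictions $\sigma_{(i,j)}$ remain in $\RO$ (the paper's remark about ``removing up to finitely many points'' is brisk, and for $k>2$ one really removes lower-dimensional slabs rather than finitely many points). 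Your inclusion--exclusion approach trades a larger number of terms (up to $(N+2)^k$) and an $|S|$-fold iteration of Lemma~\ref{lem:ray} for a cleaner membership check: at each step the input to Lemma~\ref{lem:ray} is either $\sigma$ itself or an eigenvalue function, and the latter is in $\RO$ by Proposition~\ref{prop:inclusion}. Both approaches rest on the same two ingredients---Lemma~\ref{lem:ray} and linearity of $a\mapsto\eigenfb{a}^k$---and both are valid.
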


\begin{proof}
For $i=1,\dots,k$ and $j=0,1,\dots,N$, let the sets $K_i(j)$ be as defined in Lemma \ref{lem:ray}.
Then there exists $\sigma_{(i,j)}\in C_{b,u}(\N_0^{k},\rho_{k})$ s.t. $\sigma_{(i,j)}\chi_{K_i(j)}=\sigma_{(i,j)}$ and 
$$\sigma\chi_{K_N}= \sum_{i=1}^k \sum_{j=1}^N\sigma_{(i,j)}.$$ 
The functions $\sigma_{(i,j)}$ can be constructed by considering $\sigma\chi_{K_i(j)}$ and removing up to finitely many points from its support. By Lemma \ref{lem:ray}, there exists $a_{(i,j)}\in L^\infty(\R_+^k)$ s.t. 
$$\|\sigma_{(i,j)}-\eigenfb{a_{(i,j)}}\|_\infty< \frac{\epsilon}{kN}.$$
Define $a\in L^\infty(\R_+^k)$ by $a=\sum_{i=1}^k \sum_{j=1}^N a_{(i,j)}$. Then
$\eigenfb{a}=\sum_{i=1}^k \sum_{j=1}^N \eigenfb{a_{(i,j)}}$ and
\begin{align}
    \|\sigma\chi_{K_N}-\eigenfb{a}\|_\infty & \leq \sum_{i=1}^k \sum_{j=1}^N \|\sigma_{(i,j)}-\eigenfb{a_{(i,j)}}\|_\infty\\
    &< kN \frac{\epsilon}{kN}=\epsilon. \qedhere
\end{align}
\end{proof}

\begin{theorem}\label{thm:density}
$\fG_{\textbf{1}}$ is dense in $\RO$.
\end{theorem}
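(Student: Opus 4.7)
The plan is to proceed by induction on $k$; the base case $k=1$ is the original density result of Esmeral and Maximenko \cite{EM16}, and the two lemmas of this subsection (together with Proposition \ref{prop:tailbound}) have been arranged precisely so that the inductive step reduces to assembly.

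For the inductive step, I would assume $\mathfrak{G}_{\textbf{1}}^{k-1}$ is dense in $C_{b,u}(\N_0^{k-1},\rho_{k-1})$ and fix $\sigma\in\RO$ and $\epsilon>0$. First, I would invoke Proposition \ref{prop:tailbound} to obtain $a_1\in L^\infty(\R_+^k)$ and an integer $N$ with $|\sigma(\vm)-\eigenfb{a_1}(\vm)|<\epsilon/2$ whenever $\vm\in\N_0^k\setminus K_N$ (that is, whenever every $m_i>N$). The residual $\tau:=\sigma-\eigenfb{a_1}$ remains in $\RO$ by linearity and Proposition \ref{prop:inclusion}.

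Second, since the inductive hypothesis makes Lemma \ref{lem:boundK} available at level $k$, I would apply it to $\tau$ to produce $a_2\in L^\infty(\R_+^k)$ with $\|\tau\chi_{K_N}-\eigenfb{a_2}\|_\infty<\epsilon/2$. Using linearity of the map $a\mapsto\eigenfb{a}$, set $a:=a_1+a_2$ so that $\eigenfb{a}=\eigenfb{a_1}+\eigenfb{a_2}$. On $K_N$ one has $|\sigma(\vm)-\eigenfb{a}(\vm)|=|\tau\chi_{K_N}(\vm)-\eigenfb{a_2}(\vm)|<\epsilon/2$. On $\N_0^k\setminus K_N$ the function $\tau\chi_{K_N}$ vanishes, so $|\eigenfb{a_2}(\vm)|\leq \|\tau\chi_{K_N}-\eigenfb{a_2}\|_\infty<\epsilon/2$; combined with the initial estimate via the triangle inequality, this yields $|\sigma(\vm)-\eigenfb{a}(\vm)|<\epsilon$. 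Thus $\|\sigma-\eigenfb{a}\|_\infty\leq\epsilon$, completing the inductive step.

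All the analytic content has already been absorbed into the preliminary results: Proposition \ref{prop:tailbound} (via Gaussian convolution and an approximate-identity argument) handles the ``deep interior'' of $\N_0^k$, while Lemma \ref{lem:boundK} reduces the finitely-thick boundary region $K_N$ to a finite union of $(k-1)$-dimensional slices on which the inductive hypothesis applies. The one conceptual point worth noting in the assembly is that $\eigenfb{a_2}$ is automatically small on $\N_0^k\setminus K_N$ simply because it is sup-close to a function that vanishes identically there; this is what allows a single linear sum $a_1+a_2$ to give a global approximation, rather than requiring a cutoff or partition-of-unity construction.
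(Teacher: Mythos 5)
Your proposal is correct and follows essentially the same route as the paper: induction on $k$ with the $k=1$ base case from \cite{EM16}, then Proposition \ref{prop:tailbound} to handle the region where all coordinates exceed $N$, Lemma \ref{lem:boundK} (via the inductive hypothesis) to handle the shell $K_N$, and linearity of $a\mapsto\eigenfb{a}$ to combine the two approximants. The only cosmetic difference is that the paper absorbs both regions into a single triangle-inequality estimate on $\|\sigma-\eigenfb{a}\|_\infty$ after writing $\sigma-\eigenfb{a} = (\sigma-\eigenfb{b})\chi_{K_N^c} + \big((\sigma-\eigenfb{b})\chi_{K_N}-\eigenfb{c}\big)$, whereas you argue casewise on $K_N$ and its complement; the content is identical.
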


\begin{proof}
The proof is by induction on $k$. The result for $k=1$ was proved in \cite{EM16}. Assume the result is true for $k-1$.
Let $\sigma\in \RO$ and let $\epsilon>0$. By Proposition \ref{prop:tailbound} there is $N$ and $b\in L^\infty(\R_+^k)$ s.t. for all $\vm\in \N_0^k$ with $m_i>N$,
$$|\sigma(\vm)-\eigenfb{b}(\vm)|<\frac{\epsilon}{2}.$$

By the induction hypothesis and Lemma \ref{lem:boundK}, there exists $c\in L^\infty(\R_+^k)$ s.t. 
$$\| (\sigma-\eigenfb{b})\chi_{K_N} -\eigenfb{c}\|<\frac{\epsilon}{2}.$$
Let $a=b+c$. Then $\eigenfb{a}=\eigenfb{b}+\eigenfb{c}$ and
\begin{align*}
    \|\sigma- \eigenfb{a}\|_\infty &= \|(\sigma-\eigenfb{b})\chi_{K_N^c}+ (\sigma-\eigenfb{b})\chi_{K_N}-\eigenfb{c}\|_\infty\\
    & \leq \|(\sigma-\eigenfb{b})\chi_{K_N^c}\|_\infty+\|(\sigma-\eigenfb{b})\chi_{K_N}-\eigenfb{c}\|_\infty \\
    &< \sup_{\substack{\vm\in \N_0^k\\ m_i>N \ \forall i}} |\sigma(\vm)-\eigenfb{b}(\vm)|+ \frac{\epsilon}{2}\\
    &<\frac{\epsilon}{2} + \frac{\epsilon}{2} =\epsilon. \qedhere
\end{align*}
\end{proof}

%===========================================================
\section{The \texorpdfstring{$C^*$}{C*}-algebra generated by \texorpdfstring{$\specnn$}{G}}\label{sec:densitynn}

In this section we prove the density theorem in all generality: the $C^*$-algebra generated by $\specnn$, where $\vn=(n_1,\dots,n_k)$, is the $C^*$-algebra $\RO$.
We already noticed that $\eigenfnn\in \RO$ in Proposition \ref{prop:inclusion}.
Recall the shift operators in definition \ref{def:shifts}.
Following lemma is a restatement of Corollary \ref{coro:eigenshift}
\begin{lemma}\label{lem:eigenshift} The eigenvalue function $\eigenfnn$ is the $(\vn-\textbf{1})^{th}$ left shift of $\eigenfn$, i.e.,
$$\eigenfnn=\tau_L^{\textbf{n-1}}\eigenfn.$$
\end{lemma}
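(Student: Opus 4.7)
The plan is essentially a one-line unwinding of two definitions that are already in the paper, so the proposal is correspondingly short. The statement to establish is the identity $\eigenfnn = \tau_L^{\vn-\textbf{1}}\eigenfn$ as elements of $\RO$.

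First I would observe that $\vn - \textbf{1} = (n_1-1,\dots,n_k-1) \in \N_0^k$, since each $n_i \ge 1$ by hypothesis, so that $\tau_L^{\vn - \textbf{1}}$ is a legitimate left shift in the sense of Definition \ref{def:shifts}. By that definition, applied pointwise at $\vm \in \N_0^k$,
\[
\bigl(\tau_L^{\vn-\textbf{1}}\eigenfn\bigr)(\vm) \;=\; \eigenfn(\vm + \vn - \textbf{1}).
\]
Next I would invoke Corollary \ref{coro:eigenshift}, which already asserts
\[
\eigenfnn(\vm) \;=\; \eigenfn(\vm + \vn - \textbf{1}).
\]
Combining the two displays gives $\eigenfnn(\vm) = (\tau_L^{\vn-\textbf{1}}\eigenfn)(\vm)$ for every $\vm \in \N_0^k$, which is the claimed equality of functions on $\N_0^k$.

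There is no real obstacle here: the statement is a bookkeeping reformulation of Corollary \ref{coro:eigenshift} in the shift-operator language set up in Section 3. If one wants the conclusion to live in $\RO$ as the wording suggests, it is enough to remark that $\eigenfn \in \RO$ by Proposition \ref{prop:inclusion} (specialized to $\vn = \textbf{1}$), so Lemma \ref{lem:lshiftop} guarantees $\tau_L^{\vn-\textbf{1}}\eigenfn \in \RO$, consistent with the already-known membership $\eigenfnn \in \RO$. No further computation is required.
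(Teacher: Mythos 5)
Your proof is correct and matches the paper exactly: the paper presents this lemma with no proof at all, prefaced only by the remark that it is ``a restatement of Corollary \ref{coro:eigenshift},'' which is precisely the unwinding of Definition \ref{def:shifts} that you carry out. The additional observation that the resulting function lies in $\RO$ (via Proposition \ref{prop:inclusion} or Lemma \ref{lem:lshiftop}) is a harmless bonus.
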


\begin{proposition}
The set of eigenvalue functions $\specnn$ is dense in the $C^*$-algebra $\RO$ for any $\vn=(n_1,\dots,n_k)\in \N_0^k$.
\end{proposition}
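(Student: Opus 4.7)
The plan is to reduce the general case $\vn = (n_1,\dots,n_k)$ to the already-established case $\vn = \textbf{1}$ (Theorem \ref{thm:density}) using the shift operators from Definition \ref{def:shifts}. The key identity making this possible is Lemma \ref{lem:eigenshift}, namely $\eigenfnn = \tau_L^{\vn-\textbf{1}}\eigenfn$, so that approximation by eigenvalue functions in $\specnn$ is the same as approximation by left-shifts of functions in $\fG_{\textbf{1}}$.

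Given $\sigma\in\RO$ and $\epsilon>0$, I would first apply the right shift: set $\tilde{\sigma} := \tau_R^{\vn-\textbf{1}}\sigma$. By Lemma \ref{lem:rshiftop} we have $\tilde{\sigma} \in \RO$. Next, by Theorem \ref{thm:density}, there exists $a \in L^\infty(\R_+^k)$ such that
\[
\|\tilde{\sigma} - \eigenfn\|_\infty < \epsilon.
\]
Now apply the left shift $\tau_L^{\vn-\textbf{1}}$ to both sides of the approximation. The crucial (and easily verified) observation is that $\tau_L^{\textbf{s}} \circ \tau_R^{\textbf{s}} = \id$ on $l^\infty(\N_0^k)$: indeed for any $\vm \in \N_0^k$, the point $\vm + \textbf{s}$ lies in the region where $\tau_R^{\textbf{s}}$ acts nontrivially, so $\tau_L^{\textbf{s}}(\tau_R^{\textbf{s}}\sigma)(\vm) = \sigma(\vm + \textbf{s} - \textbf{s}) = \sigma(\vm)$. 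Combined with the trivial bound $\|\tau_L^{\vn-\textbf{1}}\psi\|_\infty \leq \|\psi\|_\infty$ for any $\psi \in l^\infty(\N_0^k)$, this gives
\[
\|\sigma - \eigenfnn\|_\infty = \|\tau_L^{\vn-\textbf{1}}(\tilde{\sigma} - \eigenfn)\|_\infty \leq \|\tilde{\sigma} - \eigenfn\|_\infty < \epsilon,
\]
where we used Lemma \ref{lem:eigenshift} to rewrite $\tau_L^{\vn-\textbf{1}}\eigenfn = \eigenfnn$. Since $\epsilon > 0$ was arbitrary, this proves density of $\specnn$ in $\RO$.

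There is no real obstacle here; the work has already been done in Theorem \ref{thm:density} and in the shift-operator lemmas of Section 3. The only thing to double-check is the identity $\tau_L^{\vn-\textbf{1}} \circ \tau_R^{\vn-\textbf{1}} = \id$, which holds because the right shift only loses information on the set $\{\vm : m_i < n_i - 1 \text{ for some } i\}$, and these points never appear in the image of $\tau_R^{\vn-\textbf{1}}$ after we shift back to the left. Thus what looks like a genuine extension of Theorem \ref{thm:density} becomes a one-line reduction once the shift operators are in hand.
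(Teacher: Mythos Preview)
Your proof is correct and follows essentially the same route as the paper: right-shift $\sigma$ by $\vn-\textbf{1}$, approximate in $\fG_{\textbf{1}}$ via Theorem \ref{thm:density}, then left-shift back using Lemma \ref{lem:eigenshift} and the contractivity of $\tau_L^{\vn-\textbf{1}}$. Your explicit verification of $\tau_L^{\vn-\textbf{1}}\circ\tau_R^{\vn-\textbf{1}}=\id$ is a nice addition that the paper leaves implicit.
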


\begin{proof}
Let $\sigma\in \RO$ and let $\epsilon>0$. By Lemma \ref{lem:rshiftop}, $\sigma' := \tau_R^{\vn-\textbf{1}}\sigma\in \RO$. Also $\fG_{\textbf{1}}$ is dense in $ \RO$ by Theorem \ref{thm:density}. Hence there exists $a\in L^{\infty}(\mathbb{R}_+^k)$ such that $\|\sigma'-\gamma_{\textbf{1},a}\|_\infty<\epsilon$. Then
\begin{align*}
    \|\sigma-\gamma_{\vn,a}\|_\infty &= \|\tau_L^{\vn-\textbf{1}}\sigma'-\tau_L^{\vn-\textbf{1}}\gamma_{\textbf{1},a}\|_\infty \\
    &= \|\tau_L^{\vn-\textbf{1}}\|\|\sigma'-\gamma_{\textbf{1},a}\|_\infty<\epsilon.
\end{align*}
as $ \|\tau_L^{n-1}\|<1$.\qedhere
\end{proof}

It follows that the $C^*$-algebra generated by $\specnn$ is the  $C^*$-algebra $\RO$.

%===================================================

\section{A comparison of \texorpdfstring{$\RO$}{Cbu} with \texorpdfstring{$\tensorRO$}{tensorproduct algebra}}

In this section we compare the $C^*$-algebra  $C_{b,u}(\mathbb{N}_0^k, \rho_k)$ with the $C^*$-tensor product $\tensorRO$.

By Corollary \ref{coro:eigenproduct} and the density of $\mathfrak{G}_{\textbf{1}}$ in $\ro$, we have the following proposition.

\begin{proposition}
Let $$\Tilde{\mathfrak{G}}_{\vn}=\{\eigenfnn | a\in L^\infty(\R_+)\otimes\cdots\otimes L^\infty(\R_+)\}.$$
Then $\Tilde{\mathfrak{G}}_{\vn}$ is dense in $\tensorRO$.
\end{proposition}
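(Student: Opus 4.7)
My plan is to reduce this statement to the already-established $k=1$ density (the case of Theorem \ref{thm:density} proved in \cite{EM16}) via Corollary \ref{coro:eigenproduct} together with the shift operators of Definition \ref{def:shifts}.

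First I reduce to a single pure tensor. Since $a\mapsto \eigenfnn$ is linear in $a$ and the algebraic tensor product $L^\infty(\R_+)\otimes\cdots\otimes L^\infty(\R_+)$ is the linear span of pure tensors, it suffices, given $\epsilon>0$ and a pure tensor $f_1\otimes\cdots\otimes f_k$ with each $f_j\in\ro$, to find $a_1,\dots,a_k\in L^\infty(\R_+)$ such that $\eigenfnn$ for $a=a_1\otimes\cdots\otimes a_k$ approximates $f_1\otimes\cdots\otimes f_k$ in $\tensorRO$ to within $\epsilon$. Such pure tensors span a dense subspace of $\tensorRO$ by construction of the $C^*$-tensor product.

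For each $j$, set $g_j:=\tau_R^{n_j-1}f_j$; by Lemma \ref{lem:rshiftop}, $g_j\in\ro$. By the $k=1$ density result, pick $a_j\in L^\infty(\R_+)$ with $\|g_j-\gamma_{1,a_j}\|_\infty<\delta$ for a $\delta>0$ to be fixed below. From Definition \ref{def:shifts} one checks directly that $\tau_L^{s}\tau_R^{s}=\mathrm{id}$, and $\|\tau_L^{s}\|\le 1$ by Lemma \ref{lem:lshiftop}. Setting $h_j:=\tau_L^{n_j-1}\gamma_{1,a_j}$ therefore gives
\[
\|f_j-h_j\|_\infty=\|\tau_L^{n_j-1}(g_j-\gamma_{1,a_j})\|_\infty<\delta,\qquad \|h_j\|_\infty\le \|f_j\|_\infty+\delta.
\]
With $a:=a_1\otimes\cdots\otimes a_k$, Corollary \ref{coro:eigenproduct} identifies
\[
\eigenfnn(\vm)=\prod_{j=1}^k\gamma_{1,a_j}(m_j+n_j-1)=\prod_{j=1}^k h_j(m_j)=(h_1\otimes\cdots\otimes h_k)(\vm),
\]
so $\eigenfnn=h_1\otimes\cdots\otimes h_k$ pointwise on $\N_0^k$.

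Finally, a standard telescoping
\[
f_1\otimes\cdots\otimes f_k-h_1\otimes\cdots\otimes h_k=\sum_{j=1}^k h_1\otimes\cdots\otimes h_{j-1}\otimes(f_j-h_j)\otimes f_{j+1}\otimes\cdots\otimes f_k
\]
combined with the estimates on $\|f_j-h_j\|_\infty$ and $\|h_j\|_\infty$ yields
\[
\|f_1\otimes\cdots\otimes f_k-\eigenfnn\|_\infty\le k(M+\delta)^{k-1}\delta,\qquad M:=\max_j\|f_j\|_\infty,
\]
which is $<\epsilon$ once $\delta$ is chosen small enough. There is no substantive obstacle here; the proposition is essentially a tensor-product packaging of the $k=1$ density together with the multiplicativity of Corollary \ref{coro:eigenproduct}, with the only genuine check being the shift identity $\tau_L^{s}\tau_R^{s}=\mathrm{id}$ used to transfer an approximation of $g_j$ back to an approximation of $f_j$.
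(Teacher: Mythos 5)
Your proof is correct and takes essentially the same route the paper intends: the paper presents this proposition as an immediate consequence of Corollary~\ref{coro:eigenproduct} together with the $k=1$ density of $\mathfrak{G}_{\mathbf{1}}$ in $\ro$, and your argument is exactly that, with the details filled in. In particular you supply the shift-operator step ($g_j=\tau_R^{n_j-1}f_j$, approximate $g_j$ by $\gamma_{1,a_j}$, apply $\tau_L^{n_j-1}$ and use $\tau_L^{s}\tau_R^{s}=\mathrm{id}$ with $\|\tau_L^{s}\|\le 1$) needed to pass from density of $\{\gamma_{1,a}\}$ to density of the left-shifted family $\{\gamma_{1,a}(\cdot+n_j-1)\}$, together with the standard telescoping bound; both are correct and are the only checks the paper leaves implicit.
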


We have the inclusions 
$$C_{b,u}(\mathbb{N}_0^{k-1}, \rho_{k-1})\otimes C_{b,u}(\mathbb{N}_0, \rho_1)\xhookrightarrow{i_k}
C_{b,u}(\mathbb{N}_0^k, \rho_k).$$
In fact 
$$\tensorRO \xhookrightarrow{}
C_{b,u}(\mathbb{N}_0^k, \rho_k).$$

However, the above inclusions are not necessarily isomorphisms. We present a counter example for $k=2$.

\subsection{A counter example}\label{subsec:counterex}
Here we construct a counter example to show that $C_{b,u}(\mathbb{N}_0^2, \rho_2)$ is strictly larger than $C_{b,u}(\mathbb{N}_0, \rho_1)\otimes C_{b,u}(\mathbb{N}_0, \rho_1)$.
Let 
$$I_{i}:[\sqrt{i},\sqrt{i}+\pi)\ \ ; \ i\in \N_0.$$
Define $g:\N_0^2\to \C$ by $$g(i,j)=\sin(\sqrt{j}-\sqrt{i}) \mbox{\Large$\chi$}_{I_{i}}(\sqrt{j})\ \ ; \ (i,j)\in \N_0^2.$$

\begin{lemma}\label{lem:incbu}
Let $g$ be the function defined above. Then $g\in C_{b,u}(\mathbb{N}_0^2, \rho_2)$.
\end{lemma}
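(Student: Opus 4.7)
The plan is to express $g$ as the composition $g(i,j) = h(\sqrt{j}-\sqrt{i})$ of a $1$-Lipschitz real function $h$ with a map from $(\N_0^2,\rho_2)$ whose coordinate differences are precisely what $\rho_2$ sums up. Boundedness of $g$ by $1$ is immediate since $|\sin| \le 1$, so the content lies entirely in uniform continuity, which I will in fact upgrade to Lipschitz continuity with respect to $\rho_2$.

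First I would define $h : \R \to \R$ by $h(t) = \sin t$ for $t \in [0,\pi]$ and $h(t) = 0$ otherwise. Because $\sin(0) = \sin(\pi) = 0$, the two branches match on the boundary, so $h$ is continuous on $\R$. The condition $\sqrt{j} \in I_i = [\sqrt{i},\sqrt{i}+\pi)$ is equivalent to $\sqrt{j}-\sqrt{i}\in[0,\pi)$, and since $\sin\pi = 0$ the equality $g(i,j) = h(\sqrt{j}-\sqrt{i})$ holds for every $(i,j)\in\N_0^2$, regardless of whether $\sqrt{j}-\sqrt{i}=\pi$ is attainable.

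The key step is verifying that $h$ is $1$-Lipschitz on $\R$. For $s,t\in[0,\pi]$ this is the standard bound $|\sin s - \sin t|\le |s-t|$. If $s\in[0,\pi]$ and $t<0$, then $|h(s)-h(t)|=\sin s \le s \le s-t=|s-t|$ by the elementary inequality $\sin u \le u$ for $u\ge 0$; if instead $t>\pi$, then $|h(s)-h(t)|=\sin s = \sin(\pi-s)\le \pi-s\le t-s=|s-t|$. If both $s,t$ lie outside $[0,\pi]$ on the same side then $h(s)=h(t)=0$, and if they lie on opposite sides then $|s-t|>\pi \ge |h(s)-h(t)|$. This exhausts all cases.

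Combining, for any $(i,j),(i',j')\in \N_0^2$,
\begin{equation*}
|g(i,j)-g(i',j')| = |h(\sqrt{j}-\sqrt{i})-h(\sqrt{j'}-\sqrt{i'})| \le |\sqrt{i}-\sqrt{i'}|+|\sqrt{j}-\sqrt{j'}| = \rho_2((i,j),(i',j')),
\end{equation*}
so $g$ is $1$-Lipschitz with respect to $\rho_2$, hence bounded and uniformly continuous, which places $g \in C_{b,u}(\N_0^2,\rho_2)$. The only mildly delicate point is the Lipschitz check for $h$ across the two breakpoints $0$ and $\pi$; once that is done the rest is formal.
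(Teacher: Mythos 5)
Your proof is correct. You factor $g(i,j)=h(\sqrt{j}-\sqrt{i})$ through a single one-variable function $h(t)=\sin t$ for $t\in[0,\pi]$ and $h(t)=0$ elsewhere, prove once and for all that $h$ is $1$-Lipschitz on $\R$, and then derive the Lipschitz bound for $g$ in one line from the triangle inequality
\[
\bigl|(\sqrt{j}-\sqrt{i})-(\sqrt{j'}-\sqrt{i'})\bigr|\le |\sqrt{i}-\sqrt{i'}|+|\sqrt{j}-\sqrt{j'}|.
\]
The paper reaches the same Lipschitz constant by a different decomposition: it varies one coordinate at a time, imposes the auxiliary restriction $|\sqrt{j}-\sqrt{j'}|<\pi$ (resp.\ $|\sqrt{i}-\sqrt{i'}|<\pi$) to control which intervals $I_i$ the two points can fall into, and then handles the ``one inside, one outside'' case by noting that $\sin(\sqrt{j'}-\sqrt{i})$ is negative there, so that replacing the true value $g(i,j')=0$ by that negative number only increases the modulus before invoking the $1$-Lipschitz property of $\sin$ on the interval. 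Your argument packages that sign observation into the definition of $h$ and the verification of its global Lipschitz constant, avoiding both the coordinate-wise split and the small-distance hypothesis (you get an unconditional Lipschitz bound for all pairs, where the paper only establishes it for small $\rho_2$-distance and appeals to uniform continuity). The case check for $h$ across the breakpoints $0$ and $\pi$ (using $\sin u\le u$ and $\sin(\pi-u)=\sin u$) is the genuine content, and it is verified correctly. The only point worth flagging is cosmetic: in your ``opposite sides'' case both $h$-values are zero, so the bound follows trivially; your stated inequality $|s-t|>\pi\ge|h(s)-h(t)|$ is true but overcomplicates that subcase.
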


\begin{proof}
Clearly $g$ is bounded. First we show that if $|\sqrt{j}-\sqrt{j'}|<\pi$,
$$|g(i,j)-g(i,j')|<|\sqrt{j}-\sqrt{j'}|.$$
If $\sqrt{j},\sqrt{j'}\notin I_i$, $$|g(i,j)-g(i,j')|=0<|\sqrt{j}-\sqrt{j'}|.$$
W.l.o.g. assume $\sqrt{j}\in I_i$. Then $\sqrt{j'}\in(\sqrt{i}-\pi,\sqrt{i}+2\pi)$ as $|\sqrt{j}-\sqrt{j'}|<\pi$. Note that if $\sqrt{j'}\in I_i$, $\sin(\sqrt{j'}-\sqrt{i})=g(j',i)$  and if $\sqrt{j'}\in (\sqrt{i}-\pi,\sqrt{i})\cup (\sqrt{i}+\pi,\sqrt{i}+2\pi)$, $\sin(\sqrt{j'}-\sqrt{i})<0$. Hence
\begin{align*}
    |g(i,j)-g(i,j')|&\leq |\sin(\sqrt{j}-\sqrt{i})-\sin(\sqrt{j'}-\sqrt{i})|\\
    &\leq |\sqrt{j}-\sqrt{j'}|
\end{align*}
as required. Next we prove that if $|\sqrt{i}-\sqrt{i'}|<\pi$, $$|g(i,j)-g(i',j)|<|\sqrt{i}-\sqrt{i'}|.$$
If $\sqrt{j}\notin I_{i}\cup I_{i'}$, $|g(i,j)-g(i',j)|=0<|\sqrt{i}-\sqrt{i'}|$. W.l.o.g., assume that $\sqrt{j}\in I_i$. Then $\sqrt{j}\in(\sqrt{i'}-\pi,\sqrt{i'}+2\pi)$ as $|\sqrt{i}-\sqrt{i'}|<\pi$. Therefore 
\begin{align*}
    |g(i,j)-g(i',j)|&\leq |\sin(\sqrt{j}-\sqrt{i})-\sin(\sqrt{j}-\sqrt{i'})|\\
    &\leq |\sqrt{i}-\sqrt{i'}|,
\end{align*}
proving the inequality.\\
Now notice that for all $(i,j),(i',j')\in \N_0^2$ such that $\rho_2((i,j),(i',j'))<\pi$,
\begin{align*}
    |g(i,j)-g(i',j')| &\leq |g(i,j)-g(i',j)|+ |g(i',j)-g(i',j')|\\
    &\leq |\sqrt{i}-\sqrt{i'}| +|\sqrt{j}-\sqrt{j'}|\\
    &< \rho_2((i,j),(i',j')),
\end{align*}
proving that $g\in C_{b,u}(\mathbb{N}_0^2, \rho_2)$.
\end{proof}

\begin{definition}
If $X$ is a locally compact Hausdorff space and if $A$ is a $C^*$-algebra equipped with $\|\cdot\|_A$, let $C^b(X,A)$ be the set of all continuous bounded functions $f:X\to A$ equipped with the norm $\|\cdot\|_\infty$ given by
$$\|f\|_\infty=\sup_{x\in X} \|f(x)\|_A.$$
\end{definition}

It is well known that $C^b(X,A)$ is $C^*$-algebra.
Recall that a subset of a topological space is said to be precompact if its closure is compact.
The following theorem from Williams \cite{W03} describes a criterion to check whether an element in the C$^*$-algebra $C^b(X,A)$ belongs to the possibly smaller C$^*$-algebra $C^b(X)\otimes A$. We will use the Theorem \ref{thm:will} to show that $g\notin C_{b,u}(\mathbb{N}_0, \rho_1)\otimes C_{b,u}(\mathbb{N}_0, \rho_1)$.

\begin{theorem}[Williams  \cite{W03}]\label{thm:will}
If $X$ is a locally compact Hausdorff space and if A is a $C^*$-algebra,
then $f\in C^b(X,A)$ is in $C^b(X)\otimes A$ if and only if the range of $f$, $R(f) := \{f(x) :
x \in X\}$, is precompact.
\end{theorem}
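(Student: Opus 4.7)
The plan is to prove the two implications separately, with the $\Rightarrow$ direction being a straightforward approximation argument and the $\Leftarrow$ direction being the substantive step, requiring a carefully constructed partition of unity.

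For the forward direction (necessity of precompactness), I would first check that every elementary element $f_0 = \sum_{i=1}^n f_i \otimes a_i$ of the algebraic tensor product, regarded as the function $x \mapsto \sum_i f_i(x) a_i$, has range contained in the bounded subset $\{\sum c_i a_i : |c_i| \leq \|f_i\|_\infty\}$ of the finite-dimensional subspace $\mathrm{span}\{a_1,\dots,a_n\} \subset A$, hence precompact. To pass from the algebraic tensor product to the completion $C^b(X) \otimes A$, I would argue that uniform limits preserve total boundedness: if $f_n \to f$ uniformly and each $R(f_n)$ is totally bounded, then for any $\epsilon > 0$ pick $n$ with $\|f_n - f\|_\infty < \epsilon$ and a finite $\epsilon$-net for $R(f_n)$; this yields a $2\epsilon$-net for $R(f)$, proving $R(f)$ is totally bounded in the Banach space $A$, hence precompact.

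For the reverse direction, assume $R(f)$ is precompact, fix $\epsilon > 0$, and choose a finite $\epsilon/2$-net $\{a_1,\dots,a_N\}$ for $\overline{R(f)}$ in $A$. Define the continuous nonnegative scalar-valued functions
\[ d_i(x) = \max\{\epsilon - \|f(x) - a_i\|_A,\, 0\}, \qquad i = 1,\dots,N. \]
Since $\{a_i\}$ is an $\epsilon/2$-net and $f(x) \in \overline{R(f)}$, for every $x \in X$ there exists some $i$ with $d_i(x) \geq \epsilon/2$, so $S(x) := \sum_j d_j(x) \geq \epsilon/2$ uniformly on $X$. The normalized functions $\phi_i(x) = d_i(x)/S(x)$ therefore lie in $C^b(X)$, form a partition of unity with $\phi_i$ supported in $f^{-1}(B(a_i,\epsilon))$, and let me define the approximation
\[ g(x) = \sum_{i=1}^N \phi_i(x)\, a_i, \]
which belongs to $C^b(X) \otimes_{\mathrm{alg}} A$. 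Because $\phi_i(x) \neq 0$ forces $\|f(x) - a_i\|_A < \epsilon$, a direct estimate gives $\|g(x) - f(x)\|_A \leq \sum_i \phi_i(x)\|a_i - f(x)\|_A < \epsilon$ uniformly in $x$, so $\|g - f\|_\infty < \epsilon$. Since $\epsilon$ was arbitrary, $f$ lies in the closure $C^b(X) \otimes A$.

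The principal obstacle I anticipate is the usual difficulty with partitions of unity on locally compact Hausdorff spaces, which need not be paracompact or even normal. The key trick that sidesteps this is building $\phi_i$ directly from the values of $f$ via the $d_i$'s, rather than invoking normality to produce an abstract partition subordinate to the cover $\{f^{-1}(B(a_i,\epsilon))\}$; the $\epsilon/2$-net property supplies the uniform lower bound on $S(x)$ that keeps the $\phi_i$ continuous and bounded. A secondary subtlety is that the tensor product $C^b(X) \otimes A$ must be interpreted correctly as a $C^*$-algebra, but because $C^b(X)$ is commutative and hence nuclear, the minimal and maximal $C^*$-tensor norms agree, and both are compatible with the supremum norm inherited from $C^b(X,A)$, which is what makes the inclusion $C^b(X) \otimes A \hookrightarrow C^b(X,A)$ isometric and the above approximation meaningful.
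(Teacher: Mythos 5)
Your proof is correct, but note that the paper does not actually prove Theorem \ref{thm:will} --- it is imported from Williams \cite{W03} as a black box (it is used only to show $g \notin C_{b,u}(\mathbb{N}_0,\rho_1)\otimes C_{b,u}(\mathbb{N}_0,\rho_1)$ in Section 7), so there is no in-paper argument to compare against. Your argument is the standard one, and to the best of my knowledge it is essentially Williams' own: necessity by reducing to elementary tensors (range in a bounded subset of a finite-dimensional subspace) plus stability of total boundedness under uniform limits; sufficiency by a partition of unity built from the tent functions $d_i(x)=\max\{\epsilon - \|f(x)-a_i\|,0\}$ over an $\epsilon/2$-net. You correctly identified and resolved both delicate points. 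First, one cannot appeal to normality or paracompactness of $X$ to get a partition of unity subordinate to the cover $\{f^{-1}(B(a_i,\epsilon))\}$, since a locally compact Hausdorff space need be neither; defining the $\phi_i$ directly from the net, with the uniform lower bound $S(x)\ge\epsilon/2$ guaranteed by the $\epsilon/2$-net property, sidesteps this entirely. Second, the identification of the abstract $C^*$-tensor product $C^b(X)\otimes A$ with the closure of $C^b(X)\odot A$ inside $C^b(X,A)$ does require nuclearity of the commutative algebra $C^b(X)$ (so that the induced norm, being a $C^*$-cross-norm, is the unique $C^*$-tensor norm), and you flag this explicitly. The only cosmetic caveat is that the pointwise strict inequality $\|g(x)-f(x)\|<\epsilon$ only yields $\|g-f\|_\infty\le\epsilon$ after taking the supremum, but this is irrelevant for the closure argument.
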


In order to use Theorem \ref{thm:will} we present several lemmas about $g$. The proof of Lemma \ref{lem:cb} is trivial as $\N_0$ has discrete topology and $g$ is bounded.

\begin{lemma}\label{lem:cb}
Let $g$ be the function defined above. Define $\Tilde{g}:\N_0\to C_{b,u}(\mathbb{N}_0, \rho_1)$ by
$$\Tilde{g}(i)=g(i,\cdot).$$
Then $\Tilde{g}\in C^{b}(\mathbb{N}_0, C_{b,u}(\mathbb{N}_0, \rho_1))$.
\end{lemma}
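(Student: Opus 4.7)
The plan is to unpack the three conditions required for membership in $C^b(\mathbb{N}_0, C_{b,u}(\mathbb{N}_0, \rho_1))$: for each fixed $i$ the slice $\Tilde{g}(i)=g(i,\cdot)$ must itself lie in $C_{b,u}(\mathbb{N}_0,\rho_1)$; the map $i\mapsto \Tilde{g}(i)$ must be continuous from $\mathbb{N}_0$ into the Banach space $C_{b,u}(\mathbb{N}_0,\rho_1)$; and $\Tilde{g}$ must be bounded in the supremum-of-sup-norms. Since $\mathbb{N}_0$ carries the discrete topology, the continuity requirement is automatic, so the real content is the first and the third.

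For the first condition, I would extract from the argument already given in the proof of Lemma \ref{lem:incbu} the one-variable estimate $|g(i,j)-g(i,j')|\le |\sqrt{j}-\sqrt{j'}|$ valid whenever $|\sqrt{j}-\sqrt{j'}|<\pi$ (with $i$ held fixed). This Lipschitz-type bound with respect to $\rho_1$ in the second argument immediately yields uniform continuity of $g(i,\cdot)$ on $(\mathbb{N}_0,\rho_1)$, and boundedness of $g(i,\cdot)$ is clear since $|g(i,j)|\le 1$ from the $\sin$ factor. Hence $\Tilde{g}(i)\in C_{b,u}(\mathbb{N}_0,\rho_1)$ for every $i$.

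For the third condition, observe that $\|\Tilde{g}(i)\|_\infty = \sup_{j\in\mathbb{N}_0}|g(i,j)|\le 1$ uniformly in $i$, so $\Tilde{g}$ is a bounded function into $C_{b,u}(\mathbb{N}_0,\rho_1)$. Combined with the discrete-topology continuity observation above, this completes the verification that $\Tilde{g}\in C^b(\mathbb{N}_0, C_{b,u}(\mathbb{N}_0,\rho_1))$.

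There is no serious obstacle here. The only point worth double-checking is that the pointwise-in-$i$ uniform continuity modulus of $g(i,\cdot)$ does not need to be uniform in $i$ for $\Tilde{g}$ to land in $C^b(\mathbb{N}_0,C_{b,u}(\mathbb{N}_0,\rho_1))$; each slice just needs to individually be uniformly continuous in $j$, which is exactly what the one-variable estimate delivers. (In fact the estimate is uniform in $i$ as well, but this is not required at this stage — the uniformity across $i$ will be what makes the range fail to be precompact in the subsequent application of Theorem \ref{thm:will}.)
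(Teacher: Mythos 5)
Your proof is correct and essentially matches the paper's own proof, which is a one-line remark that the lemma is trivial because $\mathbb{N}_0$ has the discrete topology and $g$ is bounded. You simply make explicit the (implicit) step that each slice $g(i,\cdot)$ lies in $C_{b,u}(\mathbb{N}_0,\rho_1)$, which follows from the one-variable estimate already extracted in the proof of Lemma~\ref{lem:incbu}.
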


\begin{lemma}
Let $\Tilde{g}:\N_0\to C_{b,u}(\mathbb{N}_0, \rho_1)$ be defined as in Lemma \ref{lem:cb}. Then $\|\Tilde{g}(i)\|_\infty\geq \frac{1}{2}$ for all $i\in\N_0$.
\end{lemma}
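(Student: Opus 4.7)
The goal is to exhibit, for each $i \in \N_0$, some $j \in \N_0$ with $|g(i,j)| \ge 1/2$, which immediately gives the required lower bound on $\|\Tilde g(i)\|_\infty$. Since $g(i,j) = \sin(\sqrt j - \sqrt i)\chi_{I_i}(\sqrt j)$ and $\sqrt j - \sqrt i \in [0,\pi)$ whenever $\sqrt j \in I_i$, the sine is non-negative there, so it suffices to produce a $j$ with $\sqrt j - \sqrt i \in [\pi/6,\,5\pi/6]$, the range on which $\sin \ge 1/2$.

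Translating that condition back to $j$, the plan is to find an integer in the closed interval
\[
J_i := \bigl[(\sqrt i + \pi/6)^2,\; (\sqrt i + 5\pi/6)^2\bigr].
\]
Note $J_i \subset [i,\, (\sqrt i + \pi)^2)$, so any integer found in $J_i$ will automatically satisfy $\sqrt j \in I_i$. A direct computation gives
\[
(\sqrt i + 5\pi/6)^2 - (\sqrt i + \pi/6)^2 = \tfrac{4\pi}{3}\sqrt i + \tfrac{2\pi^2}{3} \ge \tfrac{2\pi^2}{3} > 6,
\]
so $J_i$ has length strictly greater than $1$ for every $i \in \N_0$, and hence contains at least one integer $j$.

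For that integer $j$ we have $\sqrt j - \sqrt i \in [\pi/6,\,5\pi/6]$, so
\[
|g(i,j)| = \sin(\sqrt j - \sqrt i) \ge \tfrac12,
\]
which yields $\|\Tilde g(i)\|_\infty \ge 1/2$, completing the argument. There is no real obstacle here; the only subtle point is checking that $J_i$ sits inside the support interval $[i,(\sqrt i + \pi)^2)$ so that the indicator $\chi_{I_i}$ does not kill the candidate $j$, and this is immediate since $5\pi/6 < \pi$.
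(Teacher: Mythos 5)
Your proof is correct and uses the same core idea as the paper: both proofs locate a $j$ with $\sqrt j - \sqrt i \in [\pi/6, 5\pi/6]$ so that $\sin(\sqrt j - \sqrt i) \ge 1/2$. The paper argues in $\sqrt{j}$-space (consecutive gaps $\sqrt{i+p}-\sqrt{i+p-1} \le 1 < 2\pi/3$, so the sequence cannot skip an interval of length $2\pi/3$), while you argue in $j$-space (the preimage interval $J_i$ has length $\ge 2\pi^2/3 > 1$, hence contains an integer); these are dual formulations of the same pigeonhole observation.
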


\begin{proof}
Notice that for all $i\in \N_0$ and for all $p\in \N$,
\begin{align*}
    \sqrt{i+p}-\sqrt{i+p-1} \leq 1 <\frac{2\pi}{3}
\end{align*}
because $\sqrt{i+p}-\sqrt{i+p-1}$ attains its maximum when $i+p=1$. Fix $i\in \N_0$. Then the sequence $\{\sqrt{i+p}\}_{p=1}^\infty$ contains a point in any interval of length $\frac{2\pi}{3}$ and, in particular, it contains a point in $(\sqrt{i}+\frac{\pi}{6}, \sqrt{i}+\frac{5\pi}{6})$. Denote that point by $\sqrt{i+p_0}$. Then $\sqrt{i+p_0}-\sqrt{i}\in (\frac{\pi}{6},\frac{5\pi}{6})$ and
\begin{align*}
    \|\Tilde{g}(i)\|_\infty &\geq |(\Tilde{g}(i))(i+p_0)|\\
    &= \sin (\sqrt{i+p_0}-\sqrt{i})\\
    &\geq \frac{1}{2}.
\end{align*}
\end{proof}

\begin{lemma}\label{lem:precompact}
Let $\Tilde{g}:\N_0\to C_{b,u}(\mathbb{N}_0, \rho_1)$ be defined as in Lemma \ref{lem:cb}. Then the range of $\Tilde{g}$ is not precompact.
\end{lemma}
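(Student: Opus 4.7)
The plan is to show that the range of $\Tilde{g}$ contains an infinite set that is uniformly separated in $\|\cdot\|_\infty$, which in a metric space precludes precompactness (precompactness is equivalent to total boundedness).

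First I would extract a subsequence $(i_k)_{k\in\N}$ of indices along which the supports of the functions $\Tilde{g}(i_k) \in C_{b,u}(\N_0,\rho_1)$ become pairwise disjoint as elements of $\ell^\infty(\N_0)$. Concretely, choose $i_1 = 0$ and inductively pick $i_{k+1}$ so that $\sqrt{i_{k+1}} \geq \sqrt{i_k} + \pi$. Then the intervals $I_{i_k} = [\sqrt{i_k}, \sqrt{i_k}+\pi)$ are pairwise disjoint, so for any $j \in \N_0$ there is at most one index $k$ with $\sqrt{j} \in I_{i_k}$, and the defining formula $g(i,j) = \sin(\sqrt{j}-\sqrt{i})\chi_{I_i}(\sqrt{j})$ forces $\Tilde{g}(i_k)(j) \cdot \Tilde{g}(i_\ell)(j) = 0$ whenever $k \neq \ell$.

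Next I would combine this disjointness with the previous lemma which gives $\|\Tilde{g}(i)\|_\infty \geq \tfrac12$ for every $i$. For each $k$ fix $j_k \in \N_0$ with $|\Tilde{g}(i_k)(j_k)| \geq \tfrac12$; by construction $\sqrt{j_k} \in I_{i_k}$, so $\Tilde{g}(i_\ell)(j_k) = 0$ for all $\ell \neq k$. Hence for $k \neq \ell$,
\[
\|\Tilde{g}(i_k) - \Tilde{g}(i_\ell)\|_\infty \geq |\Tilde{g}(i_k)(j_k) - \Tilde{g}(i_\ell)(j_k)| = |\Tilde{g}(i_k)(j_k)| \geq \tfrac12.
\]
Thus $\{\Tilde{g}(i_k)\}_{k\in\N}$ is an infinite $\tfrac12$-separated subset of the range of $\Tilde{g}$, so no subsequence is Cauchy and in particular none can converge in $C_{b,u}(\N_0,\rho_1)$. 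Therefore the range of $\Tilde{g}$ is not totally bounded, hence not precompact.

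No serious obstacle is anticipated here; the only thing to be careful about is ensuring the construction of $(i_k)$ is valid (it is, since $\sqrt{n} \to \infty$) and that the separation is uniformly bounded below rather than just $\to 0$, which is guaranteed by the $\tfrac12$ lower bound from the preceding lemma combined with the support disjointness.
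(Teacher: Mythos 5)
Your proof is correct and follows essentially the same approach as the paper: exploit the disjointness of $I_{i}$ and $I_{i'}$ when $|\sqrt{i}-\sqrt{i'}|>\pi$ together with the lower bound $\|\Tilde{g}(i)\|_\infty\ge\tfrac12$ to produce an infinite $\tfrac12$-separated set in the range, which rules out total boundedness. Your write-up is somewhat more explicit about choosing the witnesses $j_k$, but there is no substantive difference from the paper's argument.
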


\begin{proof}
Whenever $|\sqrt{i_1}-\sqrt{i_2}|>\pi$, $I_{i_1}\cap I_{i_2}=\emptyset$ and hence
\begin{align*}
    \|\Tilde{g}(i_1)-\Tilde{g}(i_2)\|_\infty = \max \{\|\Tilde{g}(i_1)\|_\infty,\|\Tilde{g}(i_2)\|_\infty \} \geq \frac{1}{2}.
\end{align*}
Thus there exits a sequence $\{ \Tilde{g}(i_s)\}_{s=1}^\infty$ s.t. $\|\Tilde{g}(i_{s_1})-\Tilde{g}(i_{s_2})\|_\infty\geq \frac{1}{2}$ whenever $s_1\neq s_2$. It follows that the range of $\Tilde{g}$ is not totally bounded and hence it is not precompact.
\end{proof}

By Lemma \ref{lem:cb}, Lemma \ref{lem:precompact}, and Therorem \ref{thm:will}, we have that $g\notin C_{b,u}(\mathbb{N}_0, \rho_1)\otimes C_{b,u}(\mathbb{N}_0, \rho_1)$. Also by Lemma \ref{lem:incbu}, $g\in C_{b,u}(\mathbb{N}_0^2, \rho_2)$. Hence we have the following proposition.
\begin{proposition}
$C_{b,u}(\mathbb{N}_0^2, \rho_2)$ is strictly larger than $C_{b,u}(\mathbb{N}_0, \rho_1)\otimes C_{b,u}(\mathbb{N}_0, \rho_1)$.
\end{proposition}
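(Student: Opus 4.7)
The plan is to use the function $g: \N_0^2 \to \C$ constructed in Subsection \ref{subsec:counterex} as an explicit witness to the strict containment. We already have the inclusion $C_{b,u}(\mathbb{N}_0, \rho_1)\otimes C_{b,u}(\mathbb{N}_0, \rho_1)\hookrightarrow C_{b,u}(\mathbb{N}_0^2, \rho_2)$ noted earlier in the section, so to establish strictness it suffices to verify that $g$ lies in the larger algebra but not in the tensor product.

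First I would quote Lemma \ref{lem:incbu}, which already shows $g \in C_{b,u}(\mathbb{N}_0^2, \rho_2)$. Next, to rule out membership in the tensor product, the idea is to pass to the vector-valued formulation: identifying elements of $C^b(\N_0) \otimes C_{b,u}(\N_0, \rho_1)$ with elements of $C^b(\N_0, C_{b,u}(\N_0, \rho_1))$ via the standard correspondence $f \leftrightarrow \tilde f$, where $\tilde f(i) = f(i, \cdot)$. Under this correspondence, the associated map $\tilde g$ lies in $C^b(\N_0, C_{b,u}(\N_0, \rho_1))$ by Lemma \ref{lem:cb}, and by Lemma \ref{lem:precompact} its range fails to be precompact. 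Williams' criterion (Theorem \ref{thm:will}) then forces $\tilde g \notin C^b(\N_0) \otimes C_{b,u}(\N_0, \rho_1)$, and hence $g$ does not belong to the (possibly smaller) subspace $C_{b,u}(\N_0, \rho_1) \otimes C_{b,u}(\N_0, \rho_1)$.

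Combining these two facts, $g$ is a concrete element of $C_{b,u}(\mathbb{N}_0^2, \rho_2)$ outside $C_{b,u}(\mathbb{N}_0, \rho_1)\otimes C_{b,u}(\mathbb{N}_0, \rho_1)$, which yields the strict inclusion claimed. There is no real obstacle in this final step, since the technical content has already been dispatched in the preceding lemmas; the only subtle point is justifying the identification between the $C^*$-tensor product $C_{b,u}(\N_0, \rho_1) \otimes C_{b,u}(\N_0, \rho_1)$ and a subspace of $C^b(\N_0, C_{b,u}(\N_0, \rho_1))$, which follows from the discreteness of $\N_0$ and the fact that $C_{b,u}(\N_0, \rho_1) \subseteq C^b(\N_0) = \ell^\infty(\N_0)$.
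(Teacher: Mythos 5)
Your argument coincides with the paper's: both take the function $g$ from the counter-example subsection, use Lemma \ref{lem:incbu} for membership in $C_{b,u}(\mathbb{N}_0^2,\rho_2)$, and combine Lemma \ref{lem:cb}, Lemma \ref{lem:precompact}, and Williams' criterion (Theorem \ref{thm:will}) to exclude membership in the tensor product. The extra remark you add about identifying $C_{b,u}(\mathbb{N}_0,\rho_1)\otimes C_{b,u}(\mathbb{N}_0,\rho_1)$ with a subalgebra of $C^b(\mathbb{N}_0,C_{b,u}(\mathbb{N}_0,\rho_1))$ is a sensible clarification of a point the paper leaves implicit, but the approach is the same.
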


%===================================================

\section{Appendix: The proof of Lemma \ref{lem:extend}}
In this appendix, we present the proof of Lemma \ref{lem:extend}. It is easy to see that $f|_{\N_0^k}=\sigma$ , from the definition of $f$. 

\subsection{The uniform norm of $f$ }
To show that $\|f\|_{\infty}=\|\sigma\|_\infty$, we will use the following lemma.

\begin{lemma}\label{lem:extend_bound_1}
Let $s\in \N$ and let $a_i\in(0,1)$ for all $i=1,\dots,s$ Then 
$$1+\sum_{l=1}^s(-1)^l \sum_{\substack{i_1,\dots,i_l\in \{1,\dots,s\}\\ i_1<\cdots <i_l}}\prod_{q=1}^l a_{i_q}\geq 0$$
\end{lemma}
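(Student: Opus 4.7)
The plan is simply to recognize the left-hand side as the expansion of a product. Expanding $\prod_{i=1}^{s}(1-a_i)$ by choosing, for each factor, either the $1$ or the $-a_i$ summand, one gets
\[
\prod_{i=1}^{s}(1-a_i)=\sum_{S\subseteq\{1,\dots,s\}}\prod_{i\in S}(-a_i)=\sum_{l=0}^{s}(-1)^{l}\sum_{\substack{i_1,\dots,i_l\in\{1,\dots,s\}\\ i_1<\cdots<i_l}}\prod_{q=1}^{l}a_{i_q},
\]
and isolating the $l=0$ term (which equals $1$) shows that the expression in the lemma is exactly $\prod_{i=1}^{s}(1-a_i)$. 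Since each $a_i\in(0,1)$, each factor $1-a_i$ lies in $(0,1)$, so the product is strictly positive, and in particular nonnegative.

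There is really no obstacle here; the only point worth checking is the bookkeeping of signs in the binomial-type expansion, which matches the double sum in the statement verbatim. If one prefers a proof without naming the product, the same identity follows by induction on $s$: for $s=1$ the claim is $1-a_1\geq 0$, and the inductive step uses
\[
\prod_{i=1}^{s+1}(1-a_i)=(1-a_{s+1})\prod_{i=1}^{s}(1-a_i),
\]
together with the splitting of subsets of $\{1,\dots,s+1\}$ into those that do and do not contain $s+1$ to match the double sum for $s+1$ against the one for $s$.
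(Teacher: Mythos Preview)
Your proof is correct. The direct identification of the expression as $\prod_{i=1}^{s}(1-a_i)$ via the subset expansion is the cleanest way to see it, and the paper's argument is exactly the inductive version you sketch at the end: it shows that the expression for $s+1$ factors as $(1-a_{s+1})$ times the expression for $s$, which is precisely the recursion behind the product formula. So the two proofs are essentially the same, with yours making the closed form explicit from the outset rather than discovering it through the induction.
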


\begin{proof}
Notice that the statement is true for $s=1$. Assume the result is true for $s$. Then
\begin{align*}
    1+\sum_{l=1}^{s+1}(-1)^l \sum_{\substack{i_1,\dots,i_l\in \{1,\dots,s+1\}\\ i_1<\cdots <i_l}}\prod_{q=1}^l a_{i_q} &=
    1+\sum_{l=1}^s(-1)^l \sum_{\substack{i_1,\dots,i_l\in \{1,\dots,s\}\\ i_1<\cdots <i_l}}\prod_{q=1}^l a_{i_q}\\
    &\hspace{1cm}- a_{s+1}\Bigg (1+\sum_{l=1}^s(-1)^l \sum_{\substack{i_1,\dots,i_l\in \{1,\dots,s\}\\ i_1<\cdots <i_l}}\prod_{q=1}^l a_{i_q}\Bigg )\\
    &= (1-a_{s+1})\Bigg ( 1+\sum_{l=1}^s(-1)^l \sum_{\substack{i_1,\dots,i_l\in \{1,\dots,s\}\\ i_1<\cdots <i_l}}\prod_{q=1}^l a_{i_q} \Bigg )\\
    &\geq 0.\qedhere
\end{align*}
Hence the statement is true by induction on $s$.
\end{proof}

Note that $f_{\vm}$ can also be written as
$$f_\vm(\vx)= \sigma(
\vm)B_0(\vx,\vm)+ \sum_{\substack{j_1,\dots,j_s\in\{1,\dots,k\}\\ j_1<\cdots <j_s}} \sigma(\vm+e_{j_1}+\cdots + e_{j_s})B_{j_1,\dots,j_s}(\vx,\vm)$$
where 
$$B_{j_1,\dots,j_s}(\vx,\vm)=\prod_{p=1}^s\frac{\sqrt{x_{j_p}}-\sqrt{m_{j_p}}}{\sqrt{m_{j_p}+1}-\sqrt{m_{j_p}}} \Bigg(1+ \sum_{l=1}^{k-s} (-1)^l\sum_{\substack{i_1,\dots,i_l\in \{1,\dots,k\}\setminus\{j_1,\dots,j_s\}\\ i_1<\cdots <i_l}}\prod_{q=1}^l \frac{\sqrt{x_{i_q}}-\sqrt{m_{i_q}}}{\sqrt{m_{i_q}+1}-\sqrt{m_{i_q}}}\Bigg )$$ and
$$B_0(\vx,\vm)=1+ \sum_{l=1}^{k}(-1)^l \sum_{\substack{i_1,\dots,i_l\in \{1,\dots,k\}\\ i_1<\cdots <i_l}}\prod_{q=1}^l \frac{\sqrt{x_{i_q}}-\sqrt{m_{i_q}}}{\sqrt{m_{i_q}+1}-\sqrt{m_{i_q}}}.$$
The coefficients $B_{j_1,\dots,j_s}(\vx,\vm)$ are computed by summing the products 
$$(-1)^{l-s}\prod_{q=1}^l\frac{\sqrt{x_{j_q}}-\sqrt{m_{j_q}}}{\sqrt{m_{j_q}+1}-\sqrt{m_{j_q}}}$$
over $\{i_1,\dots,i_l\}$ such that $i_1<\cdots<i_l$ and  $\{j_1,\dots,j_s\}\subset \{i_1,\dots,i_l\}$.

Consider the sum $$B_0(\vx,\vm)+ \sum_{\substack{j_1,\dots,j_s\in\{1,\dots,k\}\\ j_1<\cdots <j_s}} B_{j_1,\dots,j_s}(\vx,\vm).$$
Let $l\in \N$ and let $i_1,\dots, i_l\in \{1,\dots,k\}$ such that $i_1<\cdots<i_l$. Note that $(-1)^{l-s}\prod_{q=1}^l\frac{\sqrt{x_{i_q}}-\sqrt{m_{i_q}}}{\sqrt{m_{i_q}+1}-\sqrt{m_{i_q}}}$ is a term in $B_{j_1,\dots,j_s}(\vx,\vm)$ whenever $\{j_1,\dots,j_s\}\subset \{i_1,\dots,i_l\}$.
Therefore in the above sum, the coefficient of $\prod_{q=1}^l\frac{\sqrt{x_{i_q}}-\sqrt{m_{i_q}}}{\sqrt{m_{i_q}+1}-\sqrt{m_{i_q}}}$ is given by
$$\sum_{s=0}^l (-1)^{l-s}{l \choose s}=(1-1)^l=0.$$

Hence 
$$B_0(\vx,\vm)+\sum_{\substack{j_1,\dots,j_s\in\{1,\dots,k\}\\ j_1<\cdots <j_s}} B_{j_1,\dots,j_s}(\vx,\vm)=1.$$

Also, if $\vx\in \prod_{i=1}^k[m_i,m_i+1)$, $B_{j_1,\dots,j_s}(\vx,\vm)\geq 0$ by Lemma \ref{lem:extend_bound_1} and hence 
\begin{align*}
    |f_\vm(\vx)| & \leq \|\sigma\|_\infty \Bigg( B_0(\vx,\vm)+ \sum_{\substack{j_1,\dots,j_s\in\{1,\dots,k\}\\ j_1<\cdots j_s}} B_{j_1,\dots,j_s}(\vx,\vm) \Bigg )\\
    &=  \|\sigma\|_\infty.
\end{align*}
Therefore $\|f\|_\infty=\|\sigma\|_\infty$ as $f|_{\N_0^k}=\sigma$. 

\subsection{Some useful lemmas}

The following lemma is quite useful in the proofs that follow.
\begin{lemma}\label{lem:extend_1}
Let $l,s\in \{1,\dots,k\}$. Suppose $i_1,\dots,i_l\in\{1,\dots,k\}$ such that $i_1<\dots<i_l$. Then
$$a^{l+1}_{i_1,\dots,s,\dots,i_{l}}(\vm)=a^l_{i_1,\dots,i_l}(\vm+e_{s})-a^l_{i_1,\dots,i_l}(\vm)$$
\end{lemma}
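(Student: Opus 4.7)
The plan is to prove Lemma \ref{lem:extend_1} by direct expansion of the definition and a partition of the summation index set; no induction is needed. First I would note that the notation $a^{l+1}_{i_1,\dots,s,\dots,i_l}(\vm)$ tacitly assumes $s\notin\{i_1,\dots,i_l\}$, so that $\{i_1,\dots,i_l,s\}$ has exactly $l+1$ distinct elements, and write
\[
a^{l+1}_{i_1,\dots,s,\dots,i_l}(\vm)=\sum_{q=0}^{l+1}(-1)^{l+1-q}\sum_{\substack{j_1,\dots,j_q\in\{i_1,\dots,i_l,s\}\\ j_1<\cdots<j_q}}\sigma(\vm+e_{j_1}+\cdots+e_{j_q}).
\]

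Next I would split the inner sum according to whether the subset $\{j_1,\dots,j_q\}$ contains the distinguished index $s$ or not. The contributions from subsets not containing $s$ are indexed by $q\in\{0,\dots,l\}$ and by strictly increasing tuples in $\{i_1,\dots,i_l\}$; pulling out $(-1)^{l+1-q}=-(-1)^{l-q}$ identifies this piece with $-a^l_{i_1,\dots,i_l}(\vm)$. The contributions from subsets containing $s$ can be written uniquely as $\{s\}\cup\{j'_1,\dots,j'_{q-1}\}$ with $j'_1<\cdots<j'_{q-1}$ in $\{i_1,\dots,i_l\}$; after the substitution $q'=q-1$ the sign factor becomes $(-1)^{l-q'}$ and the argument of $\sigma$ becomes $\vm+e_s+e_{j'_1}+\cdots+e_{j'_{q'}}$, so this piece is exactly $a^l_{i_1,\dots,i_l}(\vm+e_s)$.

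Adding the two contributions would give
\[
a^{l+1}_{i_1,\dots,s,\dots,i_l}(\vm)=a^l_{i_1,\dots,i_l}(\vm+e_s)-a^l_{i_1,\dots,i_l}(\vm),
\]
which is the claim. The computation is essentially a discrete analogue of the product/Leibniz rule for the forward difference operator $\Delta_s\sigma(\vm)=\sigma(\vm+e_s)-\sigma(\vm)$, and the only mild obstacle is the bookkeeping: tracking the sign flip on the piece with $s\notin\{j_1,\dots,j_q\}$ and verifying that the reindexing $q\mapsto q-1$ maps $\{1,\dots,l+1\}$ bijectively onto $\{0,\dots,l\}$, matching the range of $q$ in the definition of $a^l$.
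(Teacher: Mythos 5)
Your proof is correct and follows essentially the same strategy as the paper's: expand the definition of $a^{l+1}_{i_1,\dots,s,\dots,i_l}(\vm)$ and partition the subsets of $\{i_1,\dots,i_l,s\}$ according to whether or not they contain $s$, identifying the two pieces as $-a^l_{i_1,\dots,i_l}(\vm)$ and $a^l_{i_1,\dots,i_l}(\vm+e_s)$ respectively. The paper's version assumes without loss of generality that $s>i_l$ and carries out the same regrouping over the index $q$ more explicitly, but the combinatorial identity and reindexing step are the same.
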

\begin{proof}
To keep the notations simple, we will assume $s>i_l$ and we label $s$ by $i_{l+1}$.
\begin{align*}
    a^{l+1}_{i_1,\dots,i_{l+1}}(\vm) &= \sum_{q=0}^{l+1} (-1)^{l+1-q} \sum_{\substack{j_1,\dots,j_{q}\in \{i_1,\dots,i_{l+1}\} \\ j_1<\cdots<j_{q}}} \sigma(\vm +e_{j_1}+\cdots + e_{j_q})\\
    &= (-1)^{l+1} \sigma(\vm) + \sum_{q=1}^{l} (-1)^{l+1-q}\Bigg ( \sum_{\substack{j_1,\dots,j_{q}\in \{i_1,\dots,i_{l}\} \\ j_1<\cdots<j_{q}}} \sigma(\vm +e_{j_1}+\cdots +e_{j_q})\\ 
    &\hspace{1cm}   +\sum_{\substack{j_1,\dots,j_{q-1}\in \{i_1,\dots,i_{l}\} \\ j_1<\cdots<j_{q-1}}} \sigma(\vm  +e_{j_1}+\cdots + e_{j_{q-1}} +e_{i_l}) \Bigg ) + \sigma(\vm+e_{i_1}+\cdots +e_{i_l})\\
    &= -\sum_{q=0}^{l} (-1)^{l-q} \sum_{\substack{j_1,\dots,j_{q}\in \{i_1,\dots,i_{l}\} \\ j_1<\cdots<j_{q}}} \sigma(\vm +e_{j_1}+\cdots +e_{j_q}) \\
    &\hspace{2cm} + \sum_{q=1}^{l+1} (-1)^{l+1-q} \sum_{\substack{j_1,\dots,j_{q-1}\in \{i_1,\dots,i_{l}\} \\ j_1<\cdots<j_{q-1}}} \sigma(\vm + e_{i_l} +e_{j_1}+\cdots +e_{j_{q-1}})\\
    &=-a^l_{i_1,\dots,i_l}(\vm) + \sum_{q=0}^{l} (-1)^{l-q} \sum_{\substack{j_1,\dots,j_{q}\in \{i_1,\dots,i_{l}\} \\ j_1<\cdots<j_{q}}} \sigma((\vm +e_{i_l}) +e_{j_1}+\cdots +e_{j_q})\\
    &= a^l_{i_1,\dots,i_l}(\vm+e_{l+1})-a^l_{i_1,\dots,i_l}(\vm).
\end{align*}
\end{proof}

Notice that $f$ is defined on $\prod_{i=1}^k[m_i,m_i+1)$ by $f_\vm$. But the following lemma allows $f$ to be defined on $\prod_{i=1}^k[m_i,m_i+1]$ by $f_\vm$.

\begin{lemma}
Let $\vx\in \R_+^k$, $\vm\in \N_0^k$ and let $s\in \{1,\dots, k\}$. Suppose $\lfloor x_i \rfloor=m_i$ for all $i$. Then $$f(\vx+ (1-x_{s})e_{s})=f_{\vm}(\vx+ (1-x_{s})e_{s}).$$

\end{lemma}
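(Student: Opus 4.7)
Set $\vy := \vx + (1-x_s)e_s$. By the very definition of $f$, we have $f(\vy) = f_{\vm + e_s}(\vy)$ (since the $s$-th coordinate of $\vy$ reaches the upper boundary $m_s+1$ of the $\vm$-cell). Thus the lemma reduces to the identity $f_\vm(\vy) = f_{\vm + e_s}(\vy)$ at this boundary point. At $\vy$, the $s$-th normalized factor $\frac{\sqrt{y_s}-\sqrt{m_s}}{\sqrt{m_s+1}-\sqrt{m_s}}$ appearing in the $f_\vm$-expansion equals $1$, while the corresponding factor $\frac{\sqrt{y_s}-\sqrt{m_s+1}}{\sqrt{m_s+2}-\sqrt{m_s+1}}$ in the $f_{\vm+e_s}$-expansion equals $0$. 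Consequently only the summands indexed by $J \not\ni s$ survive in $f_{\vm+e_s}(\vy)$, and because $(\vm+e_s)_j = m_j$ for $j\neq s$, they yield
\[
f_{\vm+e_s}(\vy) = \sigma(\vm+e_s) + \sum_{\substack{J \subset\{1,\dots,k\}\setminus\{s\}\\ J\neq\emptyset}} a^{|J|}_J(\vm+e_s) \prod_{q \in J}\frac{\sqrt{y_q}-\sqrt{m_q}}{\sqrt{m_q+1}-\sqrt{m_q}}.
\]

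The plan is then to massage $f_\vm(\vy)$ into the same form. I would split its defining sum into two pieces according to whether the index set $J=\{i_1<\cdots<i_l\}$ contains $s$ or not. For each $J \ni s$, I write $J = J' \cup \{s\}$ with $J' \subset \{1,\dots,k\}\setminus\{s\}$ and apply Lemma~\ref{lem:extend_1} (with the convention $a^0_\emptyset(\vm) := \sigma(\vm)$, which is consistent with the general formula) to obtain
\[
a^{|J'|+1}_{J'\cup\{s\}}(\vm) = a^{|J'|}_{J'}(\vm+e_s) - a^{|J'|}_{J'}(\vm).
\]
At $\vy$, the $s$-factor of the product in a $J\ni s$ summand is $1$, so this summand contributes $[a^{|J'|}_{J'}(\vm+e_s) - a^{|J'|}_{J'}(\vm)]\prod_{q\in J'}\frac{\sqrt{y_q}-\sqrt{m_q}}{\sqrt{m_q+1}-\sqrt{m_q}}$. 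The $-a^{|J'|}_{J'}(\vm)$ pieces cancel, term by term, the $J\not\ni s$ summands of $f_\vm(\vy)$ (including the $-\sigma(\vm)$ arising from $J'=\emptyset$, which cancels the leading $\sigma(\vm)$ appearing standalone in $f_\vm$). What remains is precisely the display above, namely $f_{\vm+e_s}(\vy)$.

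The main obstacle is the combinatorial bookkeeping, particularly handling the edge case $J=\{s\}$ (equivalently $J'=\emptyset$) correctly: its coefficient $a^1_{\{s\}}(\vm) = \sigma(\vm+e_s)-\sigma(\vm)$ must simultaneously cancel the standalone $\sigma(\vm)$ of $f_\vm$ and supply the leading $\sigma(\vm+e_s)$ of the target. One also needs to verify that the pairing $J'\leftrightarrow J'\cup\{s\}$ is a bijection between nonempty subsets of $\{1,\dots,k\}\setminus\{s\}$ and the nonempty subsets $J\ni s$, so that the cancellation is exact and no indices are double-counted.
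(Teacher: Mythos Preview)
Your proposal is correct and follows essentially the same route as the paper: both arguments split the $f_{\vm}$-expansion at $\vy$ according to whether the index set contains $s$, use Lemma~\ref{lem:extend_1} to rewrite the $s$-containing coefficients as $a^{|J'|}_{J'}(\vm+e_s)-a^{|J'|}_{J'}(\vm)$, and then combine (equivalently, cancel) to arrive at the $f_{\vm+e_s}$-expansion. The only cosmetic difference is direction---you begin by observing $f(\vy)=f_{\vm+e_s}(\vy)$ and work $f_{\vm}(\vy)$ toward it, whereas the paper transforms $f_{\vm}(\vy)$ into $f_{\vm+e_s}(\vy)$ and then identifies the latter with $f(\vy)$.
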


\begin{proof}
Note that the $s^{th}$ coordinate of $\vx+ (1-x_{s})e_{s}$ is $m_s+1$.
Therefore 
\begin{align*}
    f_{\vm}(\vx+ (1-x_{s})e_{s}) & = \sigma(\vm) + \sum_{l=1}^{k-1}  \sum_{\substack{i_1,\dots,i_l\in\{1,\dots,k\}\setminus \{s\} \\ i_1<\cdots<i_l}} a^l_{i_1,\dots,i_l}(\vm)\prod_{q=1}^l\frac{\sqrt{x_{i_q}}-\sqrt{m_{i_q}}}{\sqrt{m_{i_q}+1}-\sqrt{m_{i_q}}} \\
    & \hspace{1cm}+ a^1_{s}+ \sum_{l=2}^{k} \sum_{\substack{i_1,\dots,i_l\in\{1,\dots,k\} \\ s\in \{i_1,\dots,i_k\}\\ i_1<\cdots<i_l}} a^l_{i_1,\dots,i_l}(\vm)\prod_{\substack{q=1\\ i_q\neq s}}^l\frac{\sqrt{x_{i_q}}-\sqrt{m_{i_q}}}{\sqrt{m_{i_q}+1}-\sqrt{m_{i_q}}}\\
    &= \sigma(\vm+e_s)+ \Bigg (\sum_{l=1}^{k-1} \sum_{\substack{i_1,\dots,i_l\in\{1,\dots,k\}\setminus \{s\} \\ i_1<\cdots<i_l}} (a^l_{i_1,\dots,i_l}(\vm)+a^{l+1}_{i_1,\dots,s,\dots,i_{l}}(\vm))\\
    &\hspace{2cm}\times\prod_{q=1}^l\frac{\sqrt{x_{i_q}}-\sqrt{m_{i_q}}}{\sqrt{m_{i_q}+1}-\sqrt{m_{i_q}}}\Bigg) 
\end{align*}
by reindexing the last summand and because $\sigma(\vm)+a^1_s=\sigma(\vm+e_s)$. Also,
$$a^l_{i_1,\dots,i_l}(\vm)+a^{l+1}_{i_1,\dots,s,\dots,i_{l}}(\vm) = a^l_{i_1,\dots,i_l}(\vm+e_s) $$
by Lemma \ref{lem:extend_1}. Then
\begin{align*}
    f_{\vm}(\vx+ (1-x_{s})e_{s}) & = \sigma(\vm+e_s) + \sum_{l=1}^{k-1}  \sum_{\substack{i_1,\dots,i_l\in\{1,\dots,k\}\setminus\{s\} \\ i_1<\cdots<i_l}} a^l_{i_1,\dots,i_l}(\vm+e_s)\prod_{q=1}^l\frac{\sqrt{x_{i_q}}-\sqrt{m_{i_q}}}{\sqrt{m_{i_q}+1}-\sqrt{m_{i_q}}}\\
    &=f_{\vm+e_s}(\vx+ (1-x_{s})e_{s})\\
    &=f(\vx+ (1-x_{s})e_{s}).
\end{align*}
The second equality in the above computation holds as the $s^{th}$ coordinate of $\vx+ (1-x_{s})e_{s}$ is $m_s+1$ and hence any term in $f_{m+e_s}(\vx+(1-x_s)e_s)$ indexed by $\{i_1,\dots ,i_l\}$ would vanish if $s\in\{i_1,\dots,i_l\}$.
\end{proof}

As a consequence of above lemma, we have the following corollary.
\begin{corollary}\label{lem:extend_4}
Let $\vm\in \N_0^k$ and let $\vx\in\prod_{i=1}^k[m_i,m_i+1]$. Then $$f(\vx)=f_{\vm}(\vx).$$
\end{corollary}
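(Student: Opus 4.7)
The plan is to induct on the cardinality of the set $S := \{i \in \{1,\dots,k\} : x_i = m_i + 1\}$ of coordinates of $\vx$ lying at the upper endpoint of their respective interval. Since $\lfloor x_i\rfloor = m_i$ for $i \notin S$ and $\lfloor x_i\rfloor = m_i + 1$ for $i \in S$, the piecewise definition of $f$ gives $f(\vx) = f_{\vm + \sum_{i \in S} e_i}(\vx)$. Hence the corollary reduces to the identity
\[
f_{\vm}(\vx) \;=\; f_{\vm + \sum_{i \in S} e_i}(\vx).
\]

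The base case $|S| = 0$ is immediate since then $\sum_{i \in S} e_i = 0$. For the inductive step with $|S| \ge 1$, I would pick any $s \in S$, so that $x_s = m_s + 1$. The previous lemma, applied at auxiliary points $\vy$ with $y_s = m_s + 1$ and $y_i \in [m_i, m_i + 1)$ for $i \neq s$, yields $f_{\vm}(\vy) = f_{\vm + e_s}(\vy)$ on such $\vy$. Both sides are polynomial expressions in the variables $\sqrt{y_i}$, so this identity extends by continuity to every $\vy$ satisfying $y_s = m_s + 1$, and in particular to $\vy = \vx$. Thus $f_{\vm}(\vx) = f_{\vm + e_s}(\vx)$. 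Setting $\vm' := \vm + e_s$ and $S' := S \setminus \{s\}$, one checks that $|S'| = |S| - 1$, $\vx \in \prod_{i=1}^k [m'_i, m'_i + 1]$, and $S' = \{i : x_i = m'_i + 1\}$ (since the $s$-th coordinate satisfies $x_s = m_s + 1 = m'_s$, i.e.\ it is no longer at the upper endpoint). The inductive hypothesis applied to $\vm'$ therefore yields $f(\vx) = f_{\vm'}(\vx) = f_{\vm + e_s}(\vx)$, and chaining the two equalities completes the step.

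The main obstacle is reconciling the hypothesis of the previous lemma, which requires $\lfloor x_i\rfloor = m_i$ for \emph{every} $i$, with the inductive situation in which the coordinates $i \in S \setminus \{s\}$ satisfy $\lfloor x_i\rfloor = m_i + 1$. The resolution, sketched above, is that the proof of the previous lemma establishes the identity $f_{\vm}(\vy) = f_{\vm + e_s}(\vy)$ by a coordinate-wise algebraic cancellation in the $s$-direction (via Lemma \ref{lem:extend_1}) that is entirely independent of the values of $y_i$ for $i \neq s$; equivalently, it is a polynomial identity in $(\sqrt{y_i})_{i \neq s}$ holding on the dense subset $\prod_{i \neq s}[m_i, m_i+1)$, and hence on its closure. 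This observation is what allows the single-coordinate statement of the previous lemma to be iterated across arbitrary subsets of boundary coordinates.
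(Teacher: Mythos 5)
Your proposal is correct, and it fills in the iteration that the paper leaves implicit behind the phrase ``as a consequence of the above lemma.'' The induction on $|S|=\lvert\{i:x_i=m_i+1\}\rvert$ is the natural bookkeeping, and you correctly identify the one genuine subtlety: after the first application, the lemma's literal hypothesis $\lfloor x_i\rfloor=m_i$ for \emph{every} $i$ fails at the coordinates in $S\setminus\{s\}$, so one cannot naively re-invoke it. Your resolution is sound: the computation in the preceding lemma (via Lemma~\ref{lem:extend_1}) really just establishes the \emph{algebraic} identity $f_{\vm}(\vy)=f_{\vm+e_s}(\vy)$ whenever $y_s=m_s+1$, with no constraint on the remaining $(\sqrt{y_i})_{i\neq s}$; equivalently, both sides are continuous (indeed polynomial in the $\sqrt{y_i}$) and agree on a set whose closure contains $\vx$, so they agree at $\vx$. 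The only cosmetic point is that calling $\prod_{i\neq s}[m_i,m_i+1)$ ``dense'' should be read as dense in $\prod_{i\neq s}[m_i,m_i+1]$ (which is all that is needed), not in the ambient hyperplane. With that understood, the chain $f(\vx)=f_{\vm+\sum_{i\in S}e_i}(\vx)=\cdots=f_{\vm}(\vx)$ goes through exactly as you describe.
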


\begin{lemma}\label{lem:extend_2}
Let $l\in \N$ and let $a_i,b_i \in \C$ such that $|a_i|,|b_i|\leq 1$ for all $i=1,\dots, l$. Then 
$$\Bigg | \prod_{i=1}^la_i-\prod_{i=1}^lb_i \Bigg | \leq \sum_{i=1}^l|a_i-b_i|.$$
\end{lemma}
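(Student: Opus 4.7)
The plan is to prove this by induction on $l$, using a standard telescoping trick. The base case $l=1$ is immediate, since $|a_1-b_1|=|a_1-b_1|$.

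For the inductive step, assuming the inequality holds for $l$, I would write the difference of products of length $l+1$ as
\[
\prod_{i=1}^{l+1}a_i-\prod_{i=1}^{l+1}b_i = a_{l+1}\Bigl(\prod_{i=1}^{l}a_i-\prod_{i=1}^{l}b_i\Bigr) + (a_{l+1}-b_{l+1})\prod_{i=1}^{l}b_i.
\]
Applying the triangle inequality, the bounds $|a_{l+1}|\le 1$ and $|b_i|\le 1$, and the inductive hypothesis, the right-hand side is bounded by $\sum_{i=1}^{l}|a_i-b_i| + |a_{l+1}-b_{l+1}| = \sum_{i=1}^{l+1}|a_i-b_i|$, which closes the induction.

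Alternatively, one could give a direct telescoping proof by writing
\[
\prod_{i=1}^{l}a_i-\prod_{i=1}^{l}b_i = \sum_{j=1}^{l}\Bigl(\prod_{i<j}b_i\Bigr)(a_j-b_j)\Bigl(\prod_{i>j}a_i\Bigr),
\]
which is a single algebraic identity, and then taking absolute values using $|a_i|,|b_i|\le 1$. Either approach is routine; there is no real obstacle here, as the lemma is a standard fact used to control differences of products by the sum of coordinate differences (it is essentially the same inequality that underlies Lemma \ref{lem:intproduct}, just in the purely algebraic setting without integrals).
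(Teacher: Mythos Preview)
Your proof is correct and essentially identical to the paper's: both argue by induction on $l$ using the same telescoping split of the product difference (the paper factors out $(a_{l+1}-b_{l+1})\prod_{i=1}^{l}a_i + b_{l+1}(\prod a_i-\prod b_i)$, which is just your decomposition with the roles of $a$ and $b$ interchanged). Your alternative one-line telescoping identity is also fine and amounts to unrolling the induction.
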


\begin{proof}
Clearly, the result holds for $l=1$. Assume that the result is true for $l$. Then by triangle inequality,
\begin{align*}
    \Big | \prod_{i=1}^{l+1}a_i-\prod_{i=1}^{l+1}b_i \Big | &\leq
    |a_{l+1}-b_{l+1} | \Big|\prod_{i=1}^l a_i\Big| +  | b_{l+1}| \Big | \prod_{i=1}^la_i-\prod_{i=1}^lb_i  \Big |\\
    &\leq |a_{l+1}-b_{l+1} | + \Big | \prod_{i=1}^la_i-\prod_{i=1}^lb_i \Big |\\
    & \leq \sum_{i=1}^{l+1} |a_i-b_i|.
\end{align*}
Hence, the result is true by induction on $l$.

\end{proof}

\subsection{The uniform continuity of $f$ with respect to the square-root metric}
Let $\vx,\vx'\in\R_+^k$ s.t. $\rho_k(\vx,\vx')<\delta$. Let $\vm=(m_1,\dots,m_k)$ and $\vm'=(m'_1,\dots,m'_k)$ where $m_i=\lfloor x_i \rfloor $ and $m'_i=\lfloor x'_i\rfloor$ for $i=1,\dots,k$.\\
Case I. Assume $x'_i\in [m_i,m_{i+1}]$ for all $i$. Then by Corollary \ref{lem:extend_4}

\begin{align*}
    |f(\vx)-  f(\vx')| &\leq \sum_{l=1}^k\sum_{\substack{i_1,\dots,i_l\in\{1,\dots,k\} \\ i_1<\cdots<i_l}} |a^l_{i_1,\dots,i_l}(\vm)|\\
    & \hspace{1cm} \times \Bigg|\prod_{q=1}^l\frac{\sqrt{x_{i_q}}-\sqrt{m_{i_q}}}{\sqrt{m_{i_q}+1}-\sqrt{m_{i_q}}}-\prod_{q=1}^l\frac{\sqrt{x'_{i_q}}-\sqrt{m_{i_q}}}{\sqrt{m_{i_q}+1}-\sqrt{m_{i_q}}}\Bigg|\\
    &\leq 
    \sum_{l=1}^k\sum_{\substack{i_1,\dots,i_l\in\{1,\dots,k\} \\ i_1<\cdots<i_l}} |a^l_{i_1,\dots,i_l}(\vm)| \sum_{q=1}^l\frac{\big |\sqrt{x_{i_q}}-\sqrt{x'_{i_q}}\big |}{\sqrt{m_{i_q}+1}-\sqrt{m_{i_q}}}
\end{align*}
by Lemma \ref{lem:extend_2}. 

Fix $l\in \{1,\dots,k\}$ and $i_1,\dots,i_l\in\{1,\dots,k\}$ s.t. $i_1<\cdots<i_l$.\\
If $\sqrt{m_{i_q}+1}-\sqrt{m_{i_q}}\geq \sqrt{\delta}$ for all $q=1,\dots,l$,

\begin{align*}
        |a^l_{i_1,\dots,i_l}(\vm)| \sum_{q=1}^l\frac{|\sqrt{x_{i_q}}-\sqrt{x'_{i_q}}|}{\sqrt{m_{i_q}+1}-\sqrt{m_{i_q}}} 
        &\leq |a^l_{i_1,\dots,i_l}(\vm)| \frac{l\delta}{\sqrt{\delta}}\\
     &\leq 2^ll\|\sigma\|_\infty \sqrt{\delta}
\end{align*}
as $|a^l_{i_1,\dots,i_l}(\vm)|\leq 2^l\|\sigma\|_\infty $.

Assume  $\sqrt{m_{i_q}+1}-\sqrt{m_{i_q}}<\sqrt{\delta}$ for some $q\in \{1,\dots,l\}$. W.l.o.g. assume $\sqrt{m_{i_l}+1}-\sqrt{m_{i_l}}<\sqrt{\delta}$. 

If $l\geq 2$, 
\begin{align*}
    |a^l_{i_1,\dots,i_l}&(\vm)|\\
    &= |a^{l-1}_{i_1,\dots,i_l}(\vm+e_{i_l})-a^{l-1}_{i_1,\dots,i_l}(\vm)|\\
    &\leq \sum_{\substack{j_1,\dots,j_q\in\{i_1,\dots,i_{l-1}\} \\ q\in\{0,\dots,l-1\}}}
    |\sigma(\vm+ e_{j_1}+\cdots+e_{j_q}+e_{i_l})-\sigma(\vm+ e_{j_1}+\cdots+e_{j_q})|\\
    &\leq 2^{l-1}\omega_{\rho_k,\sigma}(\sqrt{\delta}).
\end{align*}

Hence
\begin{align*}
    |f(\vx)-  f(\vx')| &\leq \sum_{l=1}^k\sum_{\substack{i_1,\dots,i_l\in\{1,\dots,k\} \\ i_1<\cdots<i_l}} 2^{l-1}l \max \{2\|\sigma\|_\infty \sqrt{\delta},  \omega_{\rho_k,\sigma}(\sqrt{\delta})\} \\
    & = A_k\max \{2\|\sigma\|_\infty \sqrt{\delta},  \omega_{\rho_k,\sigma}(\sqrt{\delta})\}
\end{align*}
where $A_k=  k!\sum_{l=1}^k\frac{2^{l-1}}{(l-1)!(k-l)!} $.

Case II. Suppose $x'_i\notin [m_i,m_i+1]$ for some $i$.
Define $\vp=(p_1,\dots,p_k),\vp'=(p_1',\dots,p_k')\in \N_0^k$ by 
$$
\begin{array}{cc}
    p_i=m_i+1, \ p_i'=m_1' & \text{if } x_i<x'_i \\
    p_i=m_i, \ p_i'=m_1'+1 & \text{if } x_i>x'_i\\
    p_i=m_i=m'_i=p_i'& \text{if } x_i=x'_i
\end{array}
$$
for $i=1,\dots k$.
Then $p_i\in[m_i,m_i+1]$ and $p'_i\in[m'_i,m'_1+1]$ for all $i$. Hence by case I and because $\rho_k(\vp,\vp')\leq \rho_k(\vx,\vx')<\delta$, 
\begin{align*}
     |f(\vx)-  f(\vx')| &\leq |f(\vx)-  f(\vp)|+|\sigma(\vp)-\sigma(\vp')|+|f(\vp')-  f(\vx')|\\
     &\leq 2A_k\max \{2\|\sigma\|_\infty \sqrt{\delta},  \omega_{\rho_k,\sigma}(\sqrt{\delta})\} + \omega_{\rho_k,\sigma}(\delta).
\end{align*}
In both cases, $$|f(\vx)-  f(\vx')| \leq 2A_k\max \{2\|\sigma\|_\infty \sqrt{\delta},  \omega_{\rho_k,\sigma}(\sqrt{\delta})\} + \omega_{\rho_k,\sigma}(\delta).$$
Therefore $f\in \RO$. This completes the proof of Lemma \ref{lem:extend}.\\

%=============================================================

\noindent \textbf{Data availability:} Data sharing not applicable to this article as no datasets were generated or analysed during the current study.

%=============================================================


\begin{thebibliography}{XX}

\bibitem[B75a]{B75a}F.A. Berezin, \textit{General concept of quantization}. Comm. Math. Phys. \textbf{40} (1975), 153--174. 

\bibitem[B75b]{B75b}F.A. Berezin, \textit{Quantization in complex symmetric spaces}. Math. USSR-Izv. \textbf{9} (1975), 341--379.


\bibitem[B61]{B61} V. Bargmann, \textit{On a Hilbert space of analytic functions and an associated integral
transform I}. Comm. Pure Appl. Math. \textbf{14} (1961), 187--214.


\bibitem[BC86]{BC86} C. Berger, L. Coburn, \textit{Toeplitz operators and quantum mechanics}. J. Funct. Anal. \textbf{68} (1986), 273--299.

\bibitem[BI12]{BI12} W. Bauer, H. Issa, \textit{Commuting toeplitz operators with quasi-homogeneous symbols on the
Segal–Bargmann space}, J. Math. Anal. Appl. \textbf{386} (2012), 213--235. 

\bibitem[BL11]{BL11} W. Bauer, Y.J. Lee, \textit{Commuting toeplitz operators on the Segal–Bargmann space}. J. Funct.
Anal. \textbf{260} (2011), 460--489.

\bibitem[BS06]{BS06} A. B\"ottcher, B. Silbermann, "Analysis of Toeplitz Operators". Second edition. Prepared jointly with Alexei Karlovich. Springer Monographs in Mathematics. Springer-Verlag, Berlin, 2006.

\bibitem[BV12]{BV12}  W. Bauer; N. L. Vasilevski,  \textit{On the structure of a commutative Banach algebra generated By Toeplitz operators with quasi-radial quasi-homogeneous symbols.} Integral Equations Operator Theory \textbf{74} (2012), No. 2, 199--231.

\bibitem[C94]{C94} L. Coburn, "Berezin–Toeplitz quantization, in Algebraic Methods in Operator Theory",
(Birkhauser, Boston, 1994)  101--108.

\bibitem[DQ18]{DQ18} M. Dawson, R. Quiroga-Barranco, \textit{Radial Toeplitz operators on the weighted Bergman spaces of Cartan domains}. Representation theory and harmonic analysis on symmetric spaces, 97--114, Contemp. Math., {\bf 714}, AMS,  2018.

\bibitem[D{\'O}Q15]{DOQ15} M. Dawson, G. \'Olafsson, R. Quiroga-Barranco, \textit{Commuting Toeplitz operators on bounded symmetric domains and multiplicity-free restrictions of holomorphic discrete series}, Journal of Functional Analysis \textbf{268} (2015), No. 7, 1711--1732.

\bibitem[D{\'O}Q18]{DOQ18} M. Dawson, G. \'Olafsson, R. Quiroga-Barranco, \textit{The restriction principle and commuting families of Toeplitz operators on the unit ball}, Sao Paulo J.Math \textbf{12} (2018), No. 7, 196--226.

\bibitem[D{\'O}Q21]{DOQ21} M. Dawson, G. \'Olafsson, R. Quiroga-Barranco, \textit{Symbols of some commuting $C^*$-algebras of Toeplitz operators}. In preparation.


\bibitem[EM16]{EM16} K. Esmeral, E. Maximenko, \textit{Radial Toeplitz operators on the Fock space and square-root-slowly oscillating sequences},
Complex Anal. Oper. Theory \textbf{10} (2016), 1655--1677.

\bibitem[EU10]{EU10} M. Englis, H. Upmeier, \textit{Toeplitz Quantization and Asymptotic Expansions: Peter-Weyl Decomposition}, Integral Equations Operator Theory, \textbf{68} No. 3 (2010), 427--449.


\bibitem[EU11]{EU11} M. Englis, H. Upmeier, \textit{Toeplitz quantization and asymptotic expansions for real bounded symmetric domains}, Math Z, \textbf{268} No.3–4 (2011), 931--967. 

\bibitem[F99]{F99} G. B. Folland,  "Real analysis: Modern Techniques and Their Applications". New York: Wiley, 1999.

\bibitem[F01]{F01} G. B. Folland,  \textit{How to integrate a polynomial over a sphere}. Amer. Math. Monthly
\textbf{108} (2001), No 5, 446--448.

\bibitem[FK94]{FK94} J. Faraut, A. Koranyi, 
``Analysis on Symmetric Cones,'' 
Oxford Mathematical Monographs, Oxford University Press, 1994.

\bibitem[F{\'O}14]{FO14} R. Fabec, G. \'Olafsson, "Non-Commutative Harmonic Analysis", Drexville Publishing, St.Gabriel 2014.

\bibitem[GKV03]{GKV03} E. A. Grudsky, Karapetyants, A., N. L. Vasilevski,  \textit{Toeplitz operators on the unit ball in $\C^n $ with radial symbols},
Journal of Operator Theory \textbf{49} (2003), No. 2, 325--346.


\bibitem[GMV13]{GMV13}  S. M. Grudsky, E. A. Maximenko,  N. L. Vasilevski,  \textit{Radial Toeplitz operators on the unit ball and slowly oscillating sequences}, Commun. Math. Anal. \textbf{14} (2013), No. 2, 77--94.

\bibitem[GV02]{GV02} E. A. Grudsky,  N. L. Vasilevski,  \textit{Toeplitz operators on the Fock 
space: Radial component effects},
Integral Equations Operator Theory \textbf{44} (2002), No. 1, 10--37.


\bibitem[GQV06]{GQV06} S. Grudsky, R. Quiroga--Barranco, N. Vsilevski,
\textit{Commutative $C^*$-algebras of Toeplitz operators and
quantization on the unit disk},
J. Funct. Anal. \textbf{234} (2006), 1--44.

\bibitem[QS11]{QS11} R. Quiroga-Barranco and A. Sanchez-Nungaray, \textit{Commutative $C^*$-algebras of Toeplitz operators on complex projective spaces}, Integral Equations Operator Theory \textbf{71} (2011), No. 2, 225--243.

\bibitem[QS14]{QS14} R. Quiroga-Barranco, A. Sanchez-Nungaray, Armando \textit{Toeplitz operators with quasi-radial quasi-homogeneous symbols and bundles of Lagrangian frames}. J. Operator Theory 71 (2014), no. 1, 199--222. 

\bibitem[QS15]{QS15} R. Quiroga-Barranco, Raul, A. Sanchez-Nungaray,  \textit{Toeplitz operators with quasi-homogeneous quasi-radial symbols on some weakly pseudoconvex domains}. Complex Anal. Oper. Theory \textbf{9} (2015), no. 5, 1111--1134. 

\bibitem[QV07a]{QV07a}   R. Quiroga-Barranco and N. Vasilevski, \textit{Commutative algebras of Toeplitz operators on the Reinhardt domains}, Integral Equations Operator Theory \textbf{59} (2007), No. 1, 67--98.

\bibitem[QV07b]{QV07b} R. Quiroga-Barranco and N. Vasilevski, \textit{Commutative $C^*$-algebras of Toeplitz operators on the unit ball. I. Bargmann-type transforms and spectral representations of Toeplitz operators}, Integral Equations Operator Theory \textbf{59} (2007), No. 3, 379--419.

\bibitem[QV08]{QV08}   R. Quiroga-Barranco and N. Vasilevski, \textit{Commutative $C^*$-algebras of Toeplitz operators on the unit ball. II. Geometry of the level sets of symbols}, Integral Equations Operator Theory \textbf{60} (2008), No. 1, 89--132.

\bibitem[MSR16]{MSR16} M. A. Morales-Ramos, A. Sánchez-Nungaray, J. Ramírez-Ortega, \textit{Toeplitz operators with quasi-separately radial symbols on the complex projective space}. Bol. Soc. Mat. Mex. (3) 22 (2016), no. 1, 213--227. 

\bibitem[V08]{V08} N. L.Vasilevski, \textit{Commutative algebras of Toeplitz operators on the Bergman space},
Operator Theory, Advances and Applications \textbf{185}, Birkh\"auser, 2008.

 \bibitem[V10a]{V10a} N. L.Vasilevski, \textit{Quasi-radial quasi-homogeneous symbols and commutative Banach algebras of Toeplitz operators}. Integral Equations Operator Theory \textbf{66} (2010), no. 1, 141--152. 
 
\bibitem[V10b]{V10b} N. L.Vasilevski, \textit{Parabolic quasi-radial quasi-homogeneous symbols and commutative algebras of Toeplitz operators, in Topics in Operator Theory}. Operator Theory: Advances and Applications, Volume 202 . Birkhäuser Basel, 2010.

\bibitem[W03]{W03} D. P. Williams  \textit{Tensor products with bounded continuous
functions},
New York Journal of Mathematics \textbf{9} (2003),  69--77.

\bibitem[Z12]{Z12} K. Zhu,  "Analysis on the Fock Space," Graduate texts in Mathematics, Springer US, 2012.


\end{thebibliography}
\end{document}